\newtheorem{thm}{Theorem}[section]
\newtheorem{prop}[thm]{Proposition}
\newtheorem{lem}[thm]{Lemma}
\newtheorem{cor}[thm]{Corollary}
\newtheorem{clm}[thm]{Claim}
\theoremstyle{definition}
\newtheorem{dfn}[thm]{Definition}
\newtheorem{ex}[thm]{Example}
\theoremstyle{remark}
\newtheorem{rem}[thm]{Remark}
\numberwithin{equation}{section}
\newcommand{\N}{\ensuremath{\mathbb{N}}}
\newcommand{\R}{\ensuremath{\mathbb{R}}}
\newcommand{\C}{\ensuremath{\mathbb{C}}}
\newcommand{\Hb}{\ensuremath{\mathbb{H}}}
\newcommand{\X}{\mathcal{X}}
\newcommand{\cP}{\mathcal{P}}
\newcommand{\cB}{\mathcal{B}}
\newcommand{\cD}{\mathcal{D}}
\newcommand{\tC}{\widetilde{C}}
\newcommand{\tK}{\widetilde{K}}
\newcommand{\tX}{\widetilde{X}}
\newcommand{\tY}{\widetilde{Y}}
\newcommand{\cX}{\mathcal{X}}
\newcommand{\E}{\mathbb{E}}
\newcommand{\cI}{\mathcal{I}}
\DeclareMathOperator{\supp}{supp}
\DeclareMathOperator{\diam}{diam}
\DeclareMathOperator{\vol}{vol}
\newcommand{\midd}{\mathrel{} \middle| \mathrel{}}
\newcommand{\kf}{d_{\mathrm{KF}}}
\newcommand{\prok}{d_{\mathrm{P}}}
\newcommand{\Lip}{{\mathcal{L}}ip}
\newcommand{\conc}{d_{\mathrm{conc}}}
\newcommand{\haus}{d_{\mathrm{H}}}
\newcommand{\id}{\mathrm{id}}
\newcommand{\Id}{\mathrm{Id}}
\newcommand{\lm}{\mathrm{lm}}
\newcommand{\proj}{\mathrm{proj}}
\newcommand{\sk}{\boldsymbol{s}_\kappa}
\newcommand{\ck}{\boldsymbol{c}_\kappa}
\newcommand{\tsk}{\tilde{\boldsymbol{s}}_\kappa}
\DeclareMathOperator{\CD}{CD}
\DeclareMathOperator{\OD}{ObsDiam}
\DeclareMathOperator{\PD}{diam}
\DeclareMathOperator{\Ric}{Ric}
\DeclareMathOperator{\Hess}{Hess}
\DeclareMathOperator{\lip}{lip}
\newcommand{\ep}{\varepsilon}
\renewcommand{\phi}{\varphi}
\title[Convergence of cones of metric measure spaces]{Convergence of cones of metric measure spaces \\ and its application to Cauchy distribution}
\author[S.~Esaki]{Syota Esaki}
\address{Department of Applied Mathematics, Fukuoka University, Fukuoka 814-0180, JAPAN}
\email{sesaki@fukuoka-u.ac.jp}
\author[D.~Kazukawa]{Daisuke Kazukawa}
\address{Faculty of Mathematics, Kyushu University, Fukuoka 819-0395, JAPAN}
\email{kazukawa@math.kyushu-u.ac.jp}
\author[A.~Mitsuishi]{Ayato Mitsuishi}
\address{Department of Applied Mathematics, Fukuoka University, Fukuoka 814-0180, JAPAN}
\email{mitsuishi@fukuoka-u.ac.jp}
\date{February 22, 2024}
\subjclass[2020]{53C23, 60B12, 53C21, 60A10}
\keywords{metric measure space, concentration of measure phenomenon, concentration topology, cone, Cauchy distribution}
\begin{document}

\begin{abstract}
We prove that the sequence of cones of metric measure spaces converges if the sequence of base spaces converges in Gromov's box, concentration, and weak topologies.
As an application, we show that the generalized Cauchy distribution with suitable scaling converges to a half line in the concentration topology as the dimension diverges to infinity.
This is a new example distinguished from previously known examples such as Gaussian distributions and typical closed Riemannian manifolds with constant Ricci curvature.
\end{abstract}

\maketitle

\section{Introduction}

The concentration of measure phenomenon has been attracting attention in recent years.
It was first discovered by P.~L\'evy for the unit spheres, and then was put forward by V.~Milman in the 1970's.
\cites{Levy, VMil, Led} are listed for reference.
Roughly speaking, the concentration of measure phenomenon is explained as ``any $1$-Lipschitz function is almost a constant", which is often observed for high-dimensional spaces.
For example, on the $n$-dimensional unit sphere $S^n$ in $\R^{n+1}$, we have
\begin{equation}\label{eq:intro_conc}
\sigma^n(\left\{\theta \in S^n \midd |f(\theta)-m_f| \ge \ep \right\}) \le 2e^{-\frac{n-1}{2}\ep^2}
\end{equation}
for any $1$-Lipschitz function $f$ with median $m_f$ and any $\ep>0$, where $\sigma^n$ is the uniform probability measure on $S^n$.
This phenomenon strongly reflects the geometry of high dimensions.
Taking a $1$-Lipschitz function $f$ on $S^n$ as the distance from a Borel subset $A$ with $\sigma^n(A) \ge \frac{1}{2}$, \eqref{eq:intro_conc} implies
\[
\sigma^n(U_\ep(A)) \ge 1-2e^{-\frac{n-1}{2}\ep^2},
\]
where $U_\ep(A)$ is the open $\ep$-neighborhood of $A$.
More generally, this phenomenon is observed for closed Riemannian manifolds whose Ricci curvature diverges to infinity (see \cite{GM}).

For the study of the concentration of measure phenomenon, Gromov provided a revolutionary concept in \cite{Grmv}*{Chapter $3.\frac{1}{2}_+$}.
He introduced a distance function, called the {\it observable distance}, on the set $\X$ of isomorphism classes of metric measure spaces.
The induced topology is called the {\it concentration topology}.
The classical concentration of measure phenomenon corresponds to convergence to a one-point metric measure space with respect to the concentration topology.
Such a sequence of metric measure spaces is called a {\it L\'evy family}.
For example, the sequence $\{S^n\}_{n=1}^\infty$ of unit spheres is a L\'evy family.
Thus the concentration topology gives a characteristic notion of convergence of metric measure spaces capturing the high-dimensional aspects. It is also interesting as a generalization of the celebrated measured Gromov-Hausdorff convergence.
Indeed, any measured Gromov-Hausdorff convergent sequence concentrates (i.e., converges with respect to the concentration topology).

Furthermore, Gromov also introduced a natural compactification, denoted by $\Pi$, of $\X$ with respect to the concentration topology at the same time, which is one of powerful tools to study the concentration topology.
The topology of this compactification is called the {\it weak topology} and each element of $\Pi$ is called a {\it pyramid}.
The space $\Pi$ contains many infinite-dimensional objects, for example, the (virtual) infinite-dimensional Gaussian space and infinite-dimensional cube.
Pyramids themselves are also worth studying.

In recent years, the study of the concentration and weak topologies is rapidly growing.
Especially, the discovery of new concentrating sequences is significant as the generalization of the concentration of measure phenomenon.
Shioya \cites{MML,MMG} proved that the $n$-dimensional sphere $S^n(\sqrt{n})$ in $\R^{n+1}$ with radius $\sqrt{n}$ converges to the infinite-dimensional Gaussian space in the compactification $\Pi$.
In addition, the limit spaces of such as projective spaces, ellipsoids, and Stiefel manifolds whose dimension diverges to infinity have been determined under appropriate scales (see \cites{comts, ellipsoid, ST}).

In this paper, we study the convergence of cones of metric measure spaces and pyramids and apply the result to find new examples of concentrating sequences.
We start by describing our setting and main results.

\begin{dfn}[$\kappa$-Cone]\label{dfn:cone}
The {\it $\kappa$-cone} $K_\kappa(X)$ of a metric space $(X, d_X)$ for $\kappa \in \R$ is the quotient space \[
K_\kappa(X) := I_\kappa \times X \,/ \, \partial I_\kappa\times X
\]
with metric $d_{K_\kappa(X)}$, where $I_\kappa$ is the interval given by
\[
I_{\kappa} := \begin{cases}
[0, \pi/\sqrt{\kappa}] & \text{if } \kappa>0, \\
[0,\infty) & \text{if } \kappa \le 0,
\end{cases}
\]
and $\partial I_\kappa$ is the set of end points of $I_\kappa$, that is, $\partial I_\kappa = \{0, \pi/\sqrt{\kappa}\}$ if $\kappa > 0$ and $\partial I_\kappa = \{0\}$ if $\kappa \le 0$.
Let $q \colon I_\kappa \times X \to K_\kappa(X)$ be the quotient map.
The metric $d_{K_\kappa(X)}$ is given by
\begin{equation}\label{eq:cone_dist}
\ck(d_{K_\kappa(X)}(rx, r'x')) = \ck(r)\ck(r')+ \kappa\sk(r)\sk(r')\cos(\min\{d_X(x,x'), \pi\})
\end{equation}
for $rx=q(r,x), r'x'=q(r',x') \in K_\kappa(X)$, where $\sk$ is the unique solution to
\[
s''+\kappa s= 0, \quad  s(0) = 0, \quad s'(0) = 1,
\]
and $\ck$ is defined by $\ck := \sk'$.
Note that $\sk, \ck$ are explicitly expressed as
\[
\sk(r) :=
\begin{cases}
\frac{1}{\sqrt{\kappa}}\sin(\sqrt{\kappa}r) & \text{if } \kappa>0,\\
r & \text{if } \kappa=0,\\
\frac{1}{\sqrt{-\kappa}}\sinh(\sqrt{-\kappa}r) & \text{if } \kappa<0,
\end{cases} \quad
\ck(r) :=
\begin{cases}
\cos(\sqrt{\kappa}r) & \text{if } \kappa>0,\\
1 & \text{if } \kappa=0,\\
\cosh(\sqrt{-\kappa}r) & \text{if } \kappa<0.
\end{cases}
\]
In the case of $\kappa = 0$, we interpret \eqref{eq:cone_dist} as the limit formula
\[
d_{K_0(X)}(rx, r'x')^2 = r^2+(r')^2-2rr'\cos(\min\{d_X(x, x'), \pi\})
\]
as $\kappa\to 0$.
Moreover, for a metric measure space $(X, d_X, m_X)$ and a Borel probability measure $\mu$ on $I_\kappa$,
we define a metric measure space $K_{\kappa,\mu}(X)$ by the metric space $(K_\kappa(X), d_{K_\kappa(X)})$ equipped with the push-forward measure $q_\#(\mu\otimes m_X)$ and call it the {\it $\kappa$-cone of $X$ with respect to $\mu$}.
\end{dfn}

Furthermore, for any pyramid $\cP \in \Pi$, we can define the $\kappa$-cone $K_{\kappa, \mu}(\cP) \in \Pi$ of $\cP$ naturally.
See Definition \ref{dfn:cone_py} for the precise definition of $K_{\kappa, \mu}(\cP)$.

The following theorem is the main result in this paper.

\begin{thm}\label{thm:cone_py}
Let $\kappa \in \R$.
Assume that a sequence $\{\cP_n\}_{n=1}^\infty$ of pyramids converges weakly to a pyramid $\cP$ and that a sequence $\{\mu_n\}_{n=1}^\infty$ of Borel probability measures on $I_\kappa$ converges weakly to a Borel probability measure $\mu$.
Then the sequence $\{K_{\kappa, \mu_n}(\cP_n)\}_{n=1}^\infty$ of $\kappa$-cones converges weakly to $K_{\kappa, \mu}(\cP)$.
\end{thm}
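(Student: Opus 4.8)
The plan is to verify the standard characterization of weak convergence of pyramids by its two Kuratowski-type conditions: (1) every $Z \in K_{\kappa,\mu}(\cP)$ is a concentration-limit of spaces $Z_n \in K_{\kappa,\mu_n}(\cP_n)$, and (2) whenever $Z_{n_k} \in K_{\kappa,\mu_{n_k}}(\cP_{n_k})$ concentrates to some $Z$, then $Z \in K_{\kappa,\mu}(\cP)$. Since the space $\Pi$ of pyramids is weakly compact, it suffices, after passing to a subsequence, to assume that $K_{\kappa,\mu_n}(\cP_n)$ converges weakly to some pyramid $\cQ$ and to prove $\cQ = K_{\kappa,\mu}(\cP)$. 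Throughout I would freely use: the continuity of the cone construction on metric measure spaces (if $X_n \to X$ in concentration and $\mu_n \to \mu$ weakly, then $K_{\kappa,\mu_n}(X_n) \to K_{\kappa,\mu}(X)$ in concentration), established earlier; the monotonicity $X \prec Y \Rightarrow K_{\kappa,\mu}(X) \prec K_{\kappa,\mu}(Y)$ of the cone with respect to the Lipschitz order; the continuity of $W \mapsto \cP_W$ from the concentration to the weak topology; the preservation of pyramid inclusions under weak limits; and the identity $K_{\kappa,\mu}(\cP_X) = \cP_{K_{\kappa,\mu}(X)}$, where $\cP_X$ is the principal pyramid of $X$.

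For the inclusion $K_{\kappa,\mu}(\cP) \subseteq \cQ$, by definition it is enough to treat $Z \prec K_{\kappa,\mu}(X)$ with $X \in \cP$ and then pass to the pyramid closure. Condition (1) applied to $\cP_n \to \cP$ yields $X_n \in \cP_n$ with $X_n \to X$ in concentration; the space-level continuity gives $K_{\kappa,\mu_n}(X_n) \to K_{\kappa,\mu}(X)$, whence $\cP_{K_{\kappa,\mu_n}(X_n)} \to \cP_{K_{\kappa,\mu}(X)}$ weakly. As $X_n \in \cP_n$ forces $\cP_{K_{\kappa,\mu_n}(X_n)} \subseteq K_{\kappa,\mu_n}(\cP_n)$, and the right-hand side converges weakly to $\cQ$, preservation of inclusions gives $\cP_{K_{\kappa,\mu}(X)} \subseteq \cQ$, so $Z \in \cQ$.

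The reverse inclusion $\cQ \subseteq K_{\kappa,\mu}(\cP)$ is the heart of the matter. Given $Z \in \cQ$, condition (1) produces $Z_n \in K_{\kappa,\mu_n}(\cP_n)$ with $Z_n \to Z$; up to an error tending to $0$ (since $K_{\kappa,\mu_n}(\cP_n)$ is the closure of the union of the principal cone-pyramids) we may assume $Z_n \prec K_{\kappa,\mu_n}(X_n)$ for some $X_n \in \cP_n$. By compactness of $\Pi$ we pass to a subsequence along which $\cP_{X_n} \to \cR$ weakly; since $\cP_{X_n} \subseteq \cP_n \to \cP$, preservation of inclusions gives $\cR \subseteq \cP$, and monotonicity of the cone on pyramids gives $K_{\kappa,\mu}(\cR) \subseteq K_{\kappa,\mu}(\cP)$. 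Thus it suffices to show $Z \in K_{\kappa,\mu}(\cR)$, that is, to prove condition (2) for the sequence $K_{\kappa,\mu_n}(\cP_{X_n}) = \cP_{K_{\kappa,\mu_n}(X_n)} \ni Z_n$ against the candidate limit $K_{\kappa,\mu}(\cR)$.

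I expect this last step to be the main obstacle, and it cannot be dispatched by a naive induction reducing to principal pyramids, since repeating the extraction only produces a decreasing chain $\cR \supseteq \cR' \supseteq \cdots$ of base limits that need not stabilize. The real difficulty is to pass domination to the limit in the absence of any compactness of the base spaces $X_n$. The approach I would take is to realize each domination $Z_n \prec K_{\kappa,\mu_n}(X_n)$ by a $1$-Lipschitz, measure-preserving map, pull back $1$-Lipschitz functions on $Z_n$ through the quotient $q \colon I_\kappa \times X_n \to K_{\kappa,\mu_n}(X_n)$, and use the cone metric \eqref{eq:cone_dist} to split each pulled-back function into a radial component on $(I_\kappa, \mu_n)$ and an angular component that is $1$-Lipschitz on $X_n$. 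Since $\cP_{X_n} \to \cR$ weakly, the joint distributions of finitely many such angular components converge to distributions realized by $1$-Lipschitz functions on a single space of $\cR$; combining this with $\mu_n \to \mu$ and the box-distance continuity of the cone construction established earlier should let one reassemble, for each finite family of observables and each scale, an approximate domination of $Z$ by $K_{\kappa,\mu}(X)$ for a suitable $X \in \cR$. Letting the scale tend to $0$ and invoking the box-distance characterization of pyramid membership then yields $Z \in K_{\kappa,\mu}(\cR) \subseteq K_{\kappa,\mu}(\cP)$, which together with the first inclusion gives $\cQ = K_{\kappa,\mu}(\cP)$ and completes the proof.
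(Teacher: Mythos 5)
Your first inclusion $K_{\kappa,\mu}(\cP)\subseteq\cQ$ is correct and is essentially the paper's argument for part (1): approximate $Z$ by some $Z'\prec K_{\kappa,\mu}(X)$ with $X\in\cP$, lift $X$ to a sequence $X_n\in\cP_n$ converging to $X$, and use the continuity of the cone construction at the level of spaces (the paper proves this in the box topology, Theorem \ref{thm:cone_box}, which is what one actually has from $\square(X,\cP_n)\to 0$). The problem is the reverse inclusion, which you yourself flag as ``the heart of the matter'' and ``the main obstacle'': what you offer there is a plan, not a proof, and the plan as stated would not go through. A $1$-Lipschitz function on $K_\kappa(X_n)$ does not split into a radial component on $(I_\kappa,\mu_n)$ plus an angular component that is $1$-Lipschitz on $X_n$; what is true (Propositions \ref{prop:f_r} and \ref{prop:lm_1-Lip}) is that each radial slice $f_r$ is $\tsk(r)$-Lipschitz and the profile of L\'evy means is $1$-Lipschitz, and the function equals its radial profile only up to the oscillation of $f_r$ on $X_n$ --- which is negligible precisely when $\{X_n\}$ is a L\'evy family. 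That decomposition is the engine of the easy case (Theorem \ref{thm:main}, Section \ref{sec:main_proof}); in the general pyramid setting the angular oscillation does not die, and tracking it is exactly the difficulty.

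The missing idea is the choice of observables whose joint law you pass to the limit. The paper's proof of part (2) partitions the limit space $Y$ into finitely many small separated pieces $B_1,\dots,B_N$ and the radial interval $[0,R]$ into small intervals $I_1,\dots,I_M$, forms the sets $A_{ij}^n\subset X_n$ of base points that at some radius in $I_i$ land in (a preimage of) $B_j$, and takes $\Phi_n(x)=(\min\{d_{X_n}(x,A_{ij}^n),\pi\})_{i,j}$, a single $1$-Lipschitz map to $(\R^{MN},\|\cdot\|_\infty)$. The weak limit $\nu$ of $(\Phi_n)_\#m_{X_n}$ gives a space $X=(\supp\nu,\|\cdot\|_\infty,\nu)\in\cP$, and the separation built into the sets $Z_j$ (Claim \ref{clm:Phi_Z}) lets one read off from a point of $K_\mu(X)$ which piece of $Y$ it should map to, producing a map $\Psi$ that is $1$-Lipschitz up to $3\ep$ with $\prok(\Psi_\#m_{K_\mu(X)},m_{\dot Y})\le 2\ep$; Lemma \ref{lem:error_in_py} then converts these approximate dominations into $Y\in K_\mu(\cP)$. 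Your sketch gestures at ``finitely many observables'' and ``the box-distance characterization of pyramid membership'' but never specifies which observables, why their limit law reconstructs an approximate domination of $Z$, or which lemma closes the argument --- and these are precisely the content of the proof. As written, the proposal establishes one of the two inclusions and leaves the other open.
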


In particular, we obtain the following as a corollary of Theorem \ref{thm:cone_py} directly.

\begin{thm}\label{thm:main}
Let $\kappa \in \R$ and let $\{X_n\}_{n=1}^\infty$ be a L\'evy family.
Assume that a sequence $\{\mu_n\}_{n=1}^\infty$ of Borel probability measures on $I_\kappa$ converges weakly to a Borel probability measure $\mu$.
Then the sequence $\{K_{\kappa, \mu_n}(X_n)\}_{n=1}^\infty$ of $\kappa$-cones concentrates to the interval $(I_\kappa, |\cdot|, \mu)$.
\end{thm}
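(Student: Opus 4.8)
The plan is to read off the statement from Theorem \ref{thm:cone_py} by passing to the associated pyramids. Write $\cP_{X_n}$ for the pyramid of $X_n$ and $\cP_{\ast}$ for the pyramid of the one-point metric measure space. Since $\{X_n\}$ is a L\'evy family it concentrates to the one-point space, and because concentration of a sequence of metric measure spaces coincides with weak convergence of the associated pyramids (the relevant direction of this equivalence being available because the target pyramid $\cP_{\ast}$ comes from a genuine space), we obtain $\cP_{X_n}\to\cP_{\ast}$ weakly. Combining this with the hypothesis $\mu_n\to\mu$ and applying Theorem \ref{thm:cone_py} yields $K_{\kappa,\mu_n}(\cP_{X_n})\to K_{\kappa,\mu}(\cP_{\ast})$ weakly in $\Pi$.

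The next step is to rewrite both sides through the compatibility of the two cone constructions supplied by Definition \ref{dfn:cone_py}, namely that the cone of the pyramid of a space is the pyramid of its cone, $K_{\kappa,\mu_n}(\cP_{X_n})=\cP_{K_{\kappa,\mu_n}(X_n)}$. It then remains to identify the limit $K_{\kappa,\mu}(\cP_{\ast})$, the one point that requires an actual computation. Specializing \eqref{eq:cone_dist} to the one-point space, where $d_X\equiv 0$, gives $\ck(d_{K_\kappa(\{*\})}(r,r'))=\ck(r)\ck(r')+\kappa\,\sk(r)\sk(r')$, and the addition formulas for $\sk,\ck$ (valid for every sign of $\kappa$, with the limiting convention for $\kappa=0$) collapse the right-hand side to $\ck(r-r')$; hence $d_{K_\kappa(\{*\})}(r,r')=|r-r'|$. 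Since the push-forward measure is simply $\mu$, we conclude $K_{\kappa,\mu}(\{*\})=(I_\kappa,|\cdot|,\mu)$ and therefore $K_{\kappa,\mu}(\cP_{\ast})=\cP_{(I_\kappa,|\cdot|,\mu)}$.

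Putting the pieces together, $\cP_{K_{\kappa,\mu_n}(X_n)}\to\cP_{(I_\kappa,|\cdot|,\mu)}$ weakly. Because this limit is the pyramid of a genuine metric measure space, the weak convergence upgrades to concentration, so $K_{\kappa,\mu_n}(X_n)$ concentrates to $(I_\kappa,|\cdot|,\mu)$, as claimed. I expect the only real care needed to be bookkeeping: applying the equivalence between concentration and weak pyramid-convergence in its valid direction---which is licensed exactly by the cone-of-a-point computation exhibiting the limit as an honest space---and checking that $K_{\kappa,\mu}(\cP_X)=\cP_{K_{\kappa,\mu}(X)}$ is indeed the identity built into Definition \ref{dfn:cone_py}.
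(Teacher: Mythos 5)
Your proof is correct and follows the paper's primary route: the paper introduces Theorem \ref{thm:main} ``as a corollary of Theorem \ref{thm:cone_py} directly,'' and your argument supplies exactly the bookkeeping that makes this work --- the identity $K_{\kappa,\mu}(\cP X)=\cP K_{\kappa,\mu}(X)$ (via monotonicity of the cone operation under the Lipschitz order), the computation $K_{\kappa,\mu}(*)=(I_\kappa,|\cdot|,\mu)$, and the fact that $\iota\colon X\mapsto\cP X$ is a topological embedding of $(\X,\conc)$ into $(\Pi,\rho)$, which lets you pass back from weak convergence of pyramids to concentration since the limit is a genuine mm-space. The paper additionally gives a second, self-contained proof in Section \ref{sec:main_proof} (showing $1$-Lipschitz functions on the cones are almost radial), but that is explicitly an alternative argument.
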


Note that the interval $(I_\kappa, |\cdot|, \mu)$ is isomorphic to the $\kappa$-cone of a one-point metric measure space with respect to $\mu$.
In Section \ref{sec:main_proof}, we also give another proof to Theorem \ref{thm:main} in a very simple way that is limited to this case.
In this proof, we prove that ``any $1$-Lipschitz function on $K_{\kappa,\mu_n}(X_n)$ is almost a radial function if $n$ is large".
This is exactly a philosophy based on the concentration of measure phenomenon.

As an application of our results, we discover a very natural new example of concentrating sequences.
This is the generalized Cauchy distribution with one-parameter for concavity.
For a given $\beta > 0$, we define a Borel probability measure $\nu^n_\beta$ on the Euclidean space $(\R^n, \|\cdot\|)$ by
\begin{equation}\label{eq:Cauchy}
\nu^n_\beta(dx) := \frac{\Gamma(\frac{n+\beta}{2})}{\pi^{\frac{n}{2}}\Gamma(\frac{\beta}{2})}\frac{dx}{(1+\|x\|^2)^\frac{n+\beta}{2}}
\end{equation}
and call it the {\it $n$-dimensional generalized Cauchy distribution} (or {\it measure}) {\it with exponent $\beta$}.
This name is quoted from \cite{BL} and this is also called the multivariate $t$-distribution in some contexts.
From now on, we call them the Cauchy distributions simply unless otherwise stated.
We denote by $C^n_\beta$ the $n$-dimensional Euclidean space equipped with the Cauchy distribution $\nu^n_\beta$ and call $C^n_\beta$ {\it the $n$-dimensional Cauchy space with exponent $\beta$}.

The Cauchy distribution with $\beta=1$ is well-known along with the Gaussian distribution.
The $n$-dimensional Cauchy distribution $\nu^n_1$ is the law of random vector $(X_1/X_0,\ldots,X_n/X_0)$, where random variables $X_i$, $i=0,1,\ldots,n$, are independent and identically distributed by the standard Gaussian distribution.
The distribution $\nu^n_1$ is one of the stable distributions, as is the Gaussian distribution.
Moreover, it is also used in statistics and physics, so is of particular important in recent years.

We obtain the following result.
Here, we denote by $tX$ the $t$-scaled space $(X, td_X, m_X)$ of a metric measure space $X = (X, d_X, m_X)$ for $t > 0$.

\begin{thm}\label{thm:main_cauchy}
The sequence $\{\frac{1}{\sqrt{n}} C^n_\beta\}_{n=1}^\infty$ concentrates to the half line $([0, \infty), |\cdot|)$ equipped with the Borel probability measure
\begin{equation}\label{eq:recip}
\nu_\beta(dt) := \frac{2^{1-\frac{\beta}{2}}}{\Gamma(\frac{\beta}{2})} \frac{1}{t^{\beta+1}} e^{-\frac{1}{2t^2}} \, dt.
\end{equation}
\end{thm}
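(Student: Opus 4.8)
The plan is to realize the Cauchy space as a Euclidean cone over a sphere and then invoke Theorem \ref{thm:main}. Recall that $\R^n$ in polar coordinates is nothing but the $0$-cone $K_0(S^{n-1})$ over the unit sphere $S^{n-1}$ equipped with its geodesic (angular) distance: writing a point as $rx$ with $r \ge 0$ and $x \in S^{n-1}$, the law of cosines $\|rx - r'x'\|^2 = r^2 + (r')^2 - 2rr'\cos\theta$, where $\theta \in [0,\pi]$ is the angle between $x$ and $x'$, is exactly the defining relation \eqref{eq:cone_dist} for $\kappa = 0$. Since $\nu^n_\beta$ is rotationally invariant, its disintegration in polar coordinates factors as a radial marginal times the uniform probability measure $\sigma^{n-1}$ on $S^{n-1}$; integrating out the angular variable (using $\mathrm{vol}(S^{n-1}) = 2\pi^{n/2}/\Gamma(n/2)$) gives
\[
\mu^n_\beta(dr) = \frac{2\,\Gamma(\frac{n+\beta}{2})}{\Gamma(\frac{\beta}{2})\,\Gamma(\frac{n}{2})}\,\frac{r^{n-1}}{(1+r^2)^{\frac{n+\beta}{2}}}\,dr
\]
on $I_0 = [0,\infty)$. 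Thus $C^n_\beta = K_{0,\mu^n_\beta}(S^{n-1})$.

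Next I would deal with the scaling. Because the $0$-cone metric is homogeneous of degree one in the radial variable, scaling the metric by $1/\sqrt{n}$ amounts to pushing the radial measure forward by $r \mapsto r/\sqrt{n}$; hence $\tfrac{1}{\sqrt{n}}C^n_\beta = K_{0,\tilde\mu^n_\beta}(S^{n-1})$, where $\tilde\mu^n_\beta$ is the image of $\mu^n_\beta$ under $r \mapsto r/\sqrt{n}$, with density
\[
\tilde f_n(s) = \frac{2\,\Gamma(\frac{n+\beta}{2})}{\Gamma(\frac{\beta}{2})\,\Gamma(\frac{n}{2})}\,\frac{n^{n/2}\,s^{n-1}}{(1+ns^2)^{\frac{n+\beta}{2}}}
\]
for $s > 0$. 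Since the unit spheres $\{S^{n-1}\}_n$ form a L\'evy family (as recalled in the introduction), Theorem \ref{thm:main} with $\kappa = 0$, base spaces $S^{n-1}$, and measures $\tilde\mu^n_\beta$ reduces everything to proving that $\tilde\mu^n_\beta$ converges weakly to $\nu_\beta$ on $[0,\infty)$; the limit will then automatically be the interval $([0,\infty), |\cdot|, \nu_\beta)$.

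The analytic heart is this weak convergence, which I would obtain via pointwise convergence of densities. Stirling's formula gives $\Gamma(\frac{n+\beta}{2})/\Gamma(\frac{n}{2}) \sim (n/2)^{\beta/2}$, while $(1+\frac{1}{ns^2})^{\frac{n+\beta}{2}} \to e^{\frac{1}{2s^2}}$ as $n \to \infty$; factoring $(1+ns^2)^{\frac{n+\beta}{2}} = (ns^2)^{\frac{n+\beta}{2}}(1+\frac{1}{ns^2})^{\frac{n+\beta}{2}}$ and collecting the powers of $n$ and $s$ shows that $\tilde f_n(s) \to \frac{2^{1-\beta/2}}{\Gamma(\beta/2)}\,s^{-(\beta+1)}e^{-\frac{1}{2s^2}}$ for every $s > 0$, which is precisely the density of $\nu_\beta$ in \eqref{eq:recip}. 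A substitution $u = 1/(2s^2)$ confirms that $\nu_\beta$ is a probability measure, so by Scheff\'e's lemma the pointwise convergence of these probability densities upgrades to convergence in total variation, hence weak convergence.

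Applying Theorem \ref{thm:main} then yields that $\{\tfrac{1}{\sqrt{n}}C^n_\beta\}_n$ concentrates to $([0,\infty), |\cdot|, \nu_\beta)$. The only real obstacle is the density asymptotics in the last step: one must justify the control needed to pass from pointwise density convergence to weak convergence, which is exactly what Scheff\'e's lemma supplies, and be careful with the Gamma-function asymptotics. The identification in the first two paragraphs is conceptually the key move but computationally routine.
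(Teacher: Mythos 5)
Your proposal is correct and follows essentially the same route as the paper: identify $\tfrac{1}{\sqrt{n}}C^n_\beta$ with the Euclidean cone $K_{0,\,\cdot\,}(S^{n-1})$ over the unit sphere with the appropriately rescaled radial marginal, prove weak convergence of that marginal to $\nu_\beta$, and invoke Theorem \ref{thm:main} with the L\'evy family $\{S^{n-1}\}$. The only (immaterial) difference is that you deduce the weak convergence of the radial marginals from pointwise convergence of densities via Scheff\'e's lemma, whereas the paper integrates bounded test functions directly and applies dominated convergence with an explicit integrable majorant.
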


Surprisingly, the sequence $\{\frac{1}{\sqrt{n}} C^n_\beta\}_{n=1}^\infty$ concentrates to a one-dimensional space, neither to a one-point nor to an infinite-dimensional space.
Behind Theorem \ref{thm:main_cauchy}, there is the philosophy that ``any $(1/\sqrt{n})$-Lipschitz function on $C^n_\beta$ is almost a radial function if $n$ is large".
The limit measure $\nu_\beta$ is also of interest.
If $\beta=1$, the limit measure $\nu_1$ is called the absolutely reciprocal normal distribution, that is, $\nu_1$ is the law of the random variable $1/|X|$ provided that the law of $X$ is the standard one-dimensional Gaussian distribution.

Theorem \ref{thm:main_cauchy} can be stated more precisely as follows.

\begin{thm}\label{thm:main_cauchy_gene}
Let $\{r_n\}_{n=1}^\infty$ be a sequence of positive real numbers. Then we have the following {\rm (1) -- (3)}.
\begin{enumerate}
\item If $r_n\sqrt{n} \to 0$, then the sequence $\{r_n C^n_\beta\}_{n=1}^\infty$ box-converges to a one-point metric measure space.
\item  If $r_n\sqrt{n} \to \lambda \in (0,\infty)$, then the sequence $\{r_n C^n_\beta\}_{n=1}^\infty$ concentrates to the half line $([0, \infty), |\cdot|)$ equipped with the Borel probability measure
\[
\nu_{\beta, \lambda}(dt) := \frac{2^{1-\frac{\beta}{2}}}{\Gamma(\frac{\beta}{2})} \frac{\lambda^\beta}{t^{\beta+1}} e^{-\frac{\lambda^2}{2t^2}} \, dt.
\]
\item If $r_n\sqrt{n} \to \infty$, then the sequence $\{r_n C^n_\beta\}_{n=1}^\infty$ infinitely dissipates.
\end{enumerate}
\end{thm}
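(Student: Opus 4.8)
The strategy is to exhibit each scaled Cauchy space as a $0$-cone over a high-dimensional sphere and let the cone theorems do the work, so that the whole trichotomy is governed by a single one-dimensional limit. Concretely, I would use the polar decomposition $x=\rho\theta$ with $\rho=\|x\|$ and $\theta=x/\|x\|\in S^{n-1}$. For $x=\rho\theta$ and $y=\rho'\theta'$ one has $\|x-y\|^2=\rho^2+(\rho')^2-2\rho\rho'\cos\angle(\theta,\theta')$, which is exactly the $\kappa=0$ cone formula; hence $(\R^n,\|\cdot\|)$ is isometric to the $0$-cone $K_0(S^{n-1})$ over the unit sphere carrying its geodesic metric. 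As $\nu^n_\beta$ is rotationally invariant, its law is the product of a radial law and the uniform measure $\sigma^{n-1}$ on $S^{n-1}$, so, writing $\mu_n$ for the law of $r_n\|x\|$ under $\nu^n_\beta$, the map $x\mapsto q\bigl(r_n\|x\|,\,x/\|x\|\bigr)$ is an isomorphism $r_nC^n_\beta\cong K_{0,\mu_n}(S^{n-1})$. Since $\{S^{n-1}\}$ is a L\'evy family by \eqref{eq:intro_conc}, everything reduces to the asymptotics of the one-dimensional measures $\mu_n$ on $I_0=[0,\infty)$.

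\textbf{The radial marginal.} The computational heart is the weak limit of $\mu_n$. Put $Z_n:=\|x\|/\sqrt n$. Starting from the radial density proportional to $\rho^{n-1}(1+\rho^2)^{-(n+\beta)/2}$ and substituting $s=n/\rho^2$, I would show that $1/Z_n^2=n/\|x\|^2$ has density proportional to $s^{\beta/2-1}(1+s/n)^{-(n+\beta)/2}$, which converges pointwise to $s^{\beta/2-1}e^{-s/2}$; Scheff\'e's lemma then gives $n/\|x\|^2\to W$ weakly with $W\sim\chi^2_\beta$, i.e.\ $Z_n\to 1/\sqrt W$ weakly. A direct change of variables identifies $\nu_{\beta,\lambda}$ as the law of $\lambda/\sqrt W$. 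As $\mu_n$ is the law of $(r_n\sqrt n)\,Z_n$, its fate is decided by $\lim r_n\sqrt n$ against the fixed, nondegenerate, atomless limit law of $Z_n$.

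\textbf{The three regimes.} In case (2), $r_n\sqrt n\to\lambda$ gives $\mu_n\to\nu_{\beta,\lambda}$ weakly on $I_0$, and Theorem \ref{thm:main}, applied to the L\'evy family $\{S^{n-1}\}$ with the measures $\mu_n$, yields concentration of $r_nC^n_\beta\cong K_{0,\mu_n}(S^{n-1})$ to $([0,\infty),|\cdot|,\nu_{\beta,\lambda})$. In case (1), $r_n\sqrt n\to0$ together with tightness of $\{Z_n\}$ forces $\mu_n\to\delta_0$; taking $A_n=\{r_n\|x\|\le\ep\}$, of diameter $\le2\ep$ and measure $\mu_n([0,\ep])\to1$, shows $\diam(r_nC^n_\beta;1-\kappa)\to0$ for every $\kappa$, hence box-convergence to a one-point space. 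In case (3), $r_n\sqrt n\to\infty$, and the $1$-Lipschitz radial function $x\mapsto r_n\|x\|$ on $r_nC^n_\beta$ pushes the measure forward to $\mu_n$; here $\diam(\mu_n;1-\kappa)=(r_n\sqrt n)\,\diam(\mathrm{law}\,Z_n;1-\kappa)\to\infty$ for every $\kappa\in(0,1)$, so $\OD(r_nC^n_\beta;-\kappa)\to\infty$ for all $\kappa$, which is infinite dissipation.

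\textbf{Main obstacle.} I expect case (3) to be the delicate one: infinite dissipation requires the observable diameter to blow up at \emph{every} level $\kappa\in(0,1)$, and this would collapse if the radial marginal merely translated to infinity while staying tight (as for a moving Dirac mass). What rescues the argument is exactly the content of the radial-marginal step---that $Z_n$ converges to the nondegenerate, absolutely continuous law of $1/\sqrt W$, so $\diam(\mu_n;1-\kappa)$ is bounded below by a positive constant times $r_n\sqrt n$ at each fixed level. A secondary subtlety is that the cone identification uses the geodesic metric on $S^{n-1}$, so the L\'evy-family input must be read off for the intrinsic distance rather than the chordal one.
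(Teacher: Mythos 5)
Your proof is correct, and its skeleton (polar decomposition, identification of $r_nC^n_\beta$ with a $0$-cone over $S^{n-1}$, weak limit of the radial marginal, then the cone concentration theorem) is the same as the paper's; the differences lie in how you dispatch the three regimes. For (2) the paper does not re-run Theorem \ref{thm:main} with the rescaled radial laws as you do: it invokes the already-proved $\lambda=1$ case (Theorem \ref{thm:main_cauchy}) together with a general continuity-under-scaling result for the concentration topology (\cite{prod}*{Theorem 1.4}), then observes that $([0,\infty),\lambda|\cdot|,\nu_\beta)$ is mm-isomorphic to $([0,\infty),|\cdot|,\nu_{\beta,\lambda})$; your route is slightly more self-contained, at the cost of re-verifying that the law of $(r_n\sqrt n)Z_n$ converges weakly to $\nu_{\beta,\lambda}$. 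For (3) the paper uses Corollary \ref{cor:OD} (which rests on the limit formula for observable diameters from \cite{OS}) plus $1$-homogeneity of $\OD$ under scaling, whereas you lower-bound $\OD$ by the partial diameter of the pushforward under the radial $1$-Lipschitz function and then use lower semicontinuity of the partial diameter under weak convergence; this is more elementary and you correctly identify the one point that needs care, namely that $\liminf_n\diam(\mathrm{law}\,Z_n;1-\kappa)>0$ because the limit law is nondegenerate and atomless. Your radial-marginal computation via $n/\|x\|^2\Rightarrow\chi^2_\beta$ is a cleaner packaging of the paper's Lemma \ref{lem:rad_conv} (Stirling plus dominated convergence); just note that Scheff\'e's lemma requires the normalizing constants to converge as well, which is the same Stirling estimate. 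Case (1) is essentially identical to the paper's argument (collapse of the measure to a Dirac at the origin at scale $r_n$), phrased through partial diameters rather than the Prokhorov bound of Proposition \ref{prop:mmg4.12}.
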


This theorem means that the $n$-dimensional Cauchy space has the phase transition property with the critical scale order $1/\sqrt{n}$.
See \cite{OS} for the definition of the phase transition property.
The box-convergence induced by the box distance function on $\X$ is a slight modification of the measured Gromov-Hausdorff convergence.
The box distance was also introduced by Gromov in \cite{Grmv}.
Any measured Gromov-Hausdorff convergent sequence box-converges and any box-convergent sequence concentrates.
The infinite dissipation is the opposite notion of the L\'evy family.
For details on this notion, see Subsection \ref{subsec:dissip} or refer to \cites{Grmv, MMG}.

As a corollary, we obtain a statement similar to the normal law \`a la L\'evy.

\begin{cor}\label{cor:normal}
Let $f_n\colon \R^n \to \R$, $n=1,2,\ldots$, be $(1/\sqrt{n})$-Lipschitz functions. Assume that, for a subsequence $\{f_{n_i}\}$ of $\{f_n\}$, the push-forward measure $(f_{n_i})_\# \nu^{n_i}_\beta$ converges weakly to a Borel probability measure $\mu$ on $\R$. Then, there exists a $1$-Lipschitz function $\alpha\colon[0,\infty)\to\R$ such that
\[
\alpha_\# \nu_\beta = \mu.
\]
\end{cor}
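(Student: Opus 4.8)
The plan is to combine Theorem~\ref{thm:main_cauchy} with the standard behaviour of $1$-Lipschitz functions under concentration, i.e.\ the \emph{normal law \`a la L\'evy} (cf.\ \cite{Grmv}, \cite{MMG}). First note that a $(1/\sqrt n)$-Lipschitz function $f_n$ on $(\R^n,\|\cdot\|)$ is precisely a $1$-Lipschitz function on the scaled space $X_n := \tfrac{1}{\sqrt n}C^n_\beta = (\R^n, \tfrac{1}{\sqrt n}\|\cdot\|, \nu^n_\beta)$, and that its push-forward of $m_{X_n}=\nu^n_\beta$ is exactly $(f_n)_\#\nu^n_\beta$. By Theorem~\ref{thm:main_cauchy} the sequence $\{X_n\}$ concentrates to $X := ([0,\infty), |\cdot|, \nu_\beta)$; restricting to the given subsequence, which I relabel as $\{X_n\}$, we thus have $\conc(X_n, X)\to 0$. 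It therefore suffices to prove the following general fact: if $X_n$ concentrates to $X$ and $f_n \in \Lip_1(X_n)$ satisfy $(f_n)_\# m_{X_n} \to \mu$ weakly, then $\mu = \alpha_\# m_X$ for some $\alpha \in \Lip_1(X)$, where $\Lip_1$ denotes the set of $1$-Lipschitz real-valued functions.

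To produce candidate functions on the limit I would unwind the definition of the observable distance through parameters. Since $\conc(X_n,X)\to 0$, there are parameters $\phi_n$ of $X_n$ and $\psi_n$ of $X$ (that is, Borel maps $[0,1)\to X_n$, resp.\ $[0,1)\to X$, pushing the Lebesgue measure $\Leb$ forward to $m_{X_n}$, resp.\ $m_X$) for which the $\kf$-Hausdorff distance between $\{f\circ\phi_n : f\in\Lip_1(X_n)\}$ and $\{g\circ\psi_n : g\in\Lip_1(X)\}$ tends to $0$. Applying this to $f_n\circ\phi_n$ yields $g_n\in\Lip_1(X)$ with $\kf(f_n\circ\phi_n, g_n\circ\psi_n)\to 0$. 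Because Ky Fan closeness of two maps on $[0,1)$ dominates the L\'evy--Prokhorov distance of their laws, and since $(f_n\circ\phi_n)_\#\Leb = (f_n)_\# m_{X_n}$ and $(g_n\circ\psi_n)_\#\Leb = (g_n)_\#\nu_\beta$, I obtain $\prok\!\big((g_n)_\#\nu_\beta,\, (f_n)_\#\nu^n_\beta\big)\to 0$, and hence $(g_n)_\#\nu_\beta \to \mu$ weakly.

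The remaining and main difficulty is that $\Lip_1([0,\infty))$ is \emph{not} compact: one may add arbitrary constants and the functions need not be bounded, so one cannot directly extract a limit of $\{g_n\}$ by Arzel\`a--Ascoli. I would overcome this by anchoring $g_n$ using the tightness of $\mu$. Fix $M$ with $\mu([-M,M])>\tfrac12$ and a compact interval $J\subset(0,\infty)$ with $\nu_\beta(J)>\tfrac12$; weak convergence of $(g_n)_\#\nu_\beta$ then forces, for all large $n$, a point $t_n\in J$ with $|g_n(t_n)|\le M+1$. The $1$-Lipschitz bound gives $|g_n(t)|\le M+1+|t-t_n|$, so $\{g_n\}$ is equi-Lipschitz and uniformly bounded on every compact subset of $[0,\infty)$. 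Arzel\`a--Ascoli then extracts a subsequence converging, uniformly on compacta, to some $\alpha\in\Lip_1([0,\infty))$. Finally, for every bounded continuous $h$, dominated convergence yields $\int h\circ g_{n_k}\,d\nu_\beta \to \int h\circ\alpha\,d\nu_\beta$, so $(g_{n_k})_\#\nu_\beta \to \alpha_\#\nu_\beta$ weakly; comparing with $(g_{n_k})_\#\nu_\beta\to\mu$ gives $\alpha_\#\nu_\beta=\mu$, which is the desired conclusion.
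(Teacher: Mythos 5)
Your proof is correct and is, in substance, the same route as the paper: the paper's entire proof is the one-line citation of \cite{MMG}*{Proposition 6.2(1)} (the general fact that a weak limit of push-forwards $(f_n)_\# m_{X_n}$ by $1$-Lipschitz functions along a concentrating sequence is realized as $\alpha_\# m_X$ on the limit space), combined with Theorem \ref{thm:main_cauchy}. What you have done is supply a correct self-contained proof of that cited general fact — the reduction via parameters and the Ky Fan/Prokhorov comparison, and the anchoring-plus-Arzel\`a--Ascoli extraction are all sound — so no gap, just more work than the paper chooses to display.
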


The usual normal law requires that $\alpha$ is a monotone function.
Unfortunately, our result does not lead to the monotonicity of $\alpha$.
This law is deeply related to the isoperimetric problem (see \cite{GroWaist}).

Moreover, we obtain the asymptotic behavior of the observable diameter, which is one of the most fundamental invariants for the concentration of metric measure spaces. The definition of the observable diameter is in Definition \ref{dfn:OD}.

\begin{cor}\label{cor:OD}
We have
\[
\lim_{n\to\infty} \OD(\frac{1}{\sqrt{n}} C^n_\beta;-\kappa) = \OD(([0,\infty), |\cdot|, \nu_\beta); -\kappa)
\]
for any $\kappa > 0$.
\end{cor}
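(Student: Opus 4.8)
The plan is to evaluate the right-hand side explicitly and then transfer the concentration of Theorem~\ref{thm:main_cauchy} to the observable diameters via the general continuity of $\OD$ along concentrating sequences, the only delicate point being the behaviour in the parameter $\kappa$. As a first step I would show
\[
\OD(([0,\infty), |\cdot|, \nu_\beta); -\kappa) = \PD(\nu_\beta; -\kappa),
\]
where $\PD(\nu_\beta; -\kappa) := \inf\{\diam A \mid \nu_\beta(A) \ge 1-\kappa\}$ denotes the partial diameter of $\nu_\beta$. The inequality ``$\ge$'' follows at once by testing with the $1$-Lipschitz function $\id\colon [0,\infty) \to \R$, whose pushforward is $\nu_\beta$. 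For ``$\le$'', I would first note that in computing a partial diameter on the line one may restrict the infimum to intervals, since passing from a Borel set to its convex hull preserves the diameter and does not decrease the mass. Then, for any $1$-Lipschitz $f\colon [0,\infty)\to\R$ and any interval $A$ with $\nu_\beta(A)\ge 1-\kappa$, the image $f(A)$ is an interval with $\diam f(A) \le \diam A$ and $(f_\#\nu_\beta)(f(A)) \ge \nu_\beta(A) \ge 1-\kappa$; taking the infimum over such $A$ yields $\PD(f_\#\nu_\beta; -\kappa) \le \PD(\nu_\beta; -\kappa)$ for every $f$, so that the supremum defining the observable diameter is attained at the identity.

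Next I would verify that $\kappa \mapsto \PD(\nu_\beta; -\kappa)$ is continuous for every $\kappa > 0$. By \eqref{eq:recip} the density of $\nu_\beta$ is continuous and strictly positive on $(0,\infty)$, so the distribution function of $\nu_\beta$ is continuous and strictly increasing; hence the least length of an interval of $\nu_\beta$-mass $1-\kappa$ depends continuously, and strictly monotonically, on $\kappa$. Consequently every $\kappa>0$ is a continuity point of the function $\kappa \mapsto \OD(([0,\infty), |\cdot|, \nu_\beta); -\kappa)$.

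Finally, Theorem~\ref{thm:main_cauchy} gives that $\{\frac{1}{\sqrt{n}} C^n_\beta\}_{n=1}^\infty$ concentrates to $([0,\infty), |\cdot|, \nu_\beta)$. I would then combine this with the continuity of the observable diameter along concentrating sequences (see \cite{MMG}), which asserts that $\OD(X_n; -\kappa) \to \OD(X; -\kappa)$ at every continuity point of the limit profile $\kappa \mapsto \OD(X; -\kappa)$; since the previous step shows every $\kappa>0$ to be such a continuity point, this gives the claimed convergence for all $\kappa>0$. The main obstacle is precisely this matching of parameters: the general continuity theorem delivers convergence only at continuity points of the limit, so the crux of the argument is the absolute continuity of $\nu_\beta$, which forbids jumps in the limiting observable diameter and thereby upgrades the convergence to hold for every $\kappa>0$.
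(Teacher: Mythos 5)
Your proposal is correct and follows essentially the same route as the paper: identify $\OD(([0,\infty),|\cdot|,\nu_\beta);-\kappa)$ with the partial diameter of $\nu_\beta$, use the limit formula for observable diameters along concentrating sequences to sandwich $\OD(\frac{1}{\sqrt{n}}C^n_\beta;-\kappa)$ between $\PD(\nu_\beta;1-\kappa)$ and $\lim_{\ep\to0+}\PD(\nu_\beta;1-(\kappa-\ep))$, and close the gap by showing $\kappa$ is a continuity point of the limit profile, which the paper derives directly from the full support of $\nu_\beta$ (enlarging an almost optimal set by a small neighborhood strictly increases its mass) rather than via your interval-reduction argument. The only cosmetic discrepancy is the citation: the limit formula used here is \cite{OS}*{Theorem 1.1} rather than a statement from \cite{MMG}.
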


On Theorem \ref{thm:main_cauchy}, it is also significant to know whether the sequence $\{\frac{1}{\sqrt{n}} C^n_\beta\}_{n=1}^\infty$ converges with respect to a topology stronger  than the concentration topology.
We also obtain the following theorem.

\begin{thm}\label{thm:non-box}
The sequence $\{\frac{1}{\sqrt{n}} C^n_\beta\}_{n=1}^\infty$ has no box-convergent {\rm (}hence no measured Gromov-Hausdorff convergent{\rm )} subsequence.
\end{thm}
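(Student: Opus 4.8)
The plan is to argue by contradiction, exploiting that box-convergence retains far more of the metric structure than concentration does. By Theorem~\ref{thm:main_cauchy}, the full sequence $X_n:=\frac{1}{\sqrt n}C^n_\beta$ concentrates to $H:=([0,\infty),|\cdot|,\nu_\beta)$. Since box-convergence implies concentration and the observable distance is a genuine metric on $\X$ (so concentration limits are unique up to isomorphism), every box-convergent subsequence of $\{X_n\}$ would necessarily box-converge to $H$. Hence it is enough to rule out box-convergence of any subsequence to $H$. The invariant I would use to separate the two behaviours is the \emph{two-point distance distribution}: for a metric measure space $Y=(Y,d_Y,m_Y)$ let $\Lambda_Y$ denote the law of $d_Y(\xi,\xi')$ when $\xi,\xi'$ are independent $m_Y$-distributed points. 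Box-convergence controls $\Lambda_Y$, whereas the concentration topology does not.

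First I would record that box-convergence forces weak convergence of the two-point distance distributions. If $\square(X_{n_i},H)\to 0$, choose $\varepsilon_i\to 0$ together with parametrizations $\phi_i\colon([0,1],\Leb)\to X_{n_i}$ and $\psi_i\colon([0,1],\Leb)\to H$ and Borel sets $S_i\subseteq[0,1]$ with $\Leb(S_i)\ge 1-\varepsilon_i$ and $|d_{X_{n_i}}(\phi_i(s),\phi_i(t))-d_H(\psi_i(s),\psi_i(t))|\le\varepsilon_i$ for all $s,t\in S_i$. Sampling $s,t$ independently from $\Leb$, both lie in $S_i$ with probability at least $1-2\varepsilon_i$; since $\phi_i$ and $\psi_i$ are measure-preserving, the maps $(s,t)\mapsto(\phi_i(s),\phi_i(t))$ and $(s,t)\mapsto(\psi_i(s),\psi_i(t))$ push $\Leb\otimes\Leb$ forward to $m_{X_{n_i}}\otimes m_{X_{n_i}}$ and $m_H\otimes m_H$ respectively. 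Hence $\Lambda_{X_{n_i}}$ and $\Lambda_H$ admit a coupling differing by at most $\varepsilon_i$ off an event of probability at most $2\varepsilon_i$, so $\prok(\Lambda_{X_{n_i}},\Lambda_H)\to 0$ and $\Lambda_{X_{n_i}}\to\Lambda_H$ weakly.

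It then remains to compute $\lim_i\Lambda_{X_{n_i}}$ directly and see that it is \emph{not} $\Lambda_H$. Here I would use the Gaussian scale-mixture representation of the Cauchy measure: if $Z\sim N(0,I_n)$ and $g\sim\chi^2_\beta$ are independent, then $\sigma Z$ with $\sigma:=g^{-1/2}$ has law $\nu^n_\beta$, and $\sigma$ itself is $\nu_\beta$-distributed. Taking two independent copies $x_a=\sigma_a Z_a$ and $x_b=\sigma_b Z_b$, one has
\[
\frac{1}{n}\|x_a-x_b\|^2=\sigma_a^2\,\frac{\|Z_a\|^2}{n}+\sigma_b^2\,\frac{\|Z_b\|^2}{n}-2\sigma_a\sigma_b\,\frac{\langle Z_a,Z_b\rangle}{n}\xrightarrow[n\to\infty]{}\sigma_a^2+\sigma_b^2
\]
in probability, using $\|Z\|^2/n\to 1$ and the near-orthogonality $\langle Z_a,Z_b\rangle/n\to 0$. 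Thus $\Lambda_{X_n}$ converges weakly to the law of $\sqrt{\sigma_a^2+\sigma_b^2}$, whereas $\Lambda_H$ is the law of $|\sigma_a-\sigma_b|$ for the same i.i.d.\ pair. Since $\sqrt{\sigma_a^2+\sigma_b^2}>|\sigma_a-\sigma_b|$ almost surely, testing against the bounded continuous function $x\mapsto e^{-x}$ gives $\E[e^{-\sqrt{\sigma_a^2+\sigma_b^2}}]<\E[e^{-|\sigma_a-\sigma_b|}]$, so the two limit laws differ and we reach the desired contradiction. Finally, the parenthetical assertion follows at once, since measured Gromov--Hausdorff convergence implies box-convergence.

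The main obstacle I expect is the honest justification of the scale-mixture computation together with the accompanying high-dimensional concentration estimates ($\|Z\|^2/n\to 1$ and $\langle Z_a,Z_b\rangle/n\to 0$), which must be controlled uniformly enough to conclude weak convergence of $\Lambda_{X_n}$; these are routine but carry the real content. Conceptually, the delicate point is that this very computation is invisible to the concentration topology, which is precisely why $\{X_n\}$ concentrates to $H$ and yet no subsequence can box-converge.
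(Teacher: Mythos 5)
Your argument is correct, but it follows a genuinely different route from the paper's. The paper truncates: it radially projects $\frac{1}{\sqrt{n}}C^n_\beta$ onto the ball $B^n_R(o)$ with $R$ chosen so that $\nu_\beta([0,R])=\frac13$, invokes the fact that Lipschitz-dominated images of a box-convergent sequence admit box-convergent subsequences, identifies the forced limit $([0,R],|\cdot|,\phi_\#\nu_\beta)$ (which carries mass $\frac23$ at the single point $R$, so its partial diameter $\diam(\,\cdot\,;\tfrac23)$ vanishes), and derives a contradiction from the semicontinuity of the partial diameter under box convergence together with the elementary observation that any subset of $B^n_R(o)$ of measure $\ge\frac12$ must meet the boundary sphere and hence have diameter at least $R$. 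You instead use the two-point distance distribution $\Lambda_Y$, which is indeed continuous under box convergence by exactly the parameter-plus-Ky-Fan argument you give (via Lemma \ref{lem:prok_kf}), and you compute its limit explicitly through the Gaussian scale-mixture representation $x=g^{-1/2}Z$, $g\sim\chi^2_\beta$, $Z\sim N(0,I_n)$; both the identification of the law of $g^{-1/2}$ with $\nu_\beta$ and the normalization check against \eqref{eq:Cauchy} work out, and the strong law gives $\frac{1}{\sqrt{n}}\|x_a-x_b\|\to\sqrt{\sigma_a^2+\sigma_b^2}$ almost surely in a coupling across dimensions, which differs from the law of $|\sigma_a-\sigma_b|$ governing $\Lambda_H$ since $\sigma_a\sigma_b>0$ almost surely. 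Your preliminary reduction (any box-limit of a subsequence must be $H$ itself, by uniqueness of $\conc$-limits) is also needed in the paper's proof in essentially the same way. What your approach buys is a transparent probabilistic explanation of the failure of box convergence --- the typical pairwise distance in $\frac{1}{\sqrt{n}}C^n_\beta$ is $\sqrt{\sigma_a^2+\sigma_b^2}$ rather than $|\sigma_a-\sigma_b|$, so the distance matrix distribution simply does not converge to that of the concentration limit --- and as a by-product it reproves Lemma \ref{lem:rad_conv}; what it costs is the need to carry out the law-of-large-numbers estimates and the cross-dimensional coupling honestly, whereas the paper's proof leans entirely on two ready-made lemmas about the box distance and a one-line geometric fact about half-spaces. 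Both proofs are complete in outline; the remaining work in yours (realizing $Z_a,Z_b$ as initial segments of infinite i.i.d.\ Gaussian sequences and the change of variables for $g^{-1/2}$) is routine, as you note.
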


As the final part of the introduction, we discuss the relation between Theorem \ref{thm:main_cauchy} and the weighted Ricci curvature.
For the precise definition, see Section \ref{sec:inv}.
We see that the $(-\beta)$-weighted Ricci curvature $\Ric_{\frac{1}{\sqrt{n}}C^n_\beta, -\beta}$ of the scaled Cauchy space $\frac{1}{\sqrt{n}}C^n_\beta$ is nonnegative, or equivalently, $\frac{1}{\sqrt{n}}C^n_\beta$ is a $\CD(0,-\beta)$ space.
The study of convergence of $\CD(K,N)$ spaces for $N<0$ is performed by Magnabosco-Rigoni-Sosa \cite{MRS} and Oshima \cite{Oshima}.
Theorem \ref{thm:main_cauchy} provides a natural example of the main result in \cite{Oshima}.
Moreover, we observe the asymptotic behavior of weighted Ricci curvature as $n\to\infty$.
At any point $x \in \R^n\setminus\{o\}$, we have
\[
\Ric_{\frac{1}{\sqrt{n}}C^n_\beta, -\beta}(\frac{x}{\|x\|},\frac{x}{\|x\|}) = \frac{n(n+\beta)}{(1+n\|x\|^2)^2} \to \frac{1}{\|x\|^4}
\]
and, for any $w \in \R^n\setminus\{o\}$ with $w \perp x$,
\[
\Ric_{\frac{1}{\sqrt{n}}C^n_\beta, -\beta}(\frac{w}{\|w\|},\frac{w}{\|w\|}) = \frac{n(n+\beta)}{1+n\|x\|^2} \to \infty
\]
as $n\to\infty$.
As mentioned at the beginning, the divergence of the Ricci curvature to infinity implies the L\'evy family (see \cite{Oshima}*{Corollary 4.3} for $N<0$).
Therefore Theorem \ref{thm:main_cauchy} can be interpreted as that the $(n-1)$-dimensional spherical directions at each point of $\frac{1}{\sqrt{n}}C^n_\beta$ concentrates to a single point leaving only the $1$-dimensional radial direction.

The organization of this paper is as follows.
After the preliminaries section,
in Section \ref{sec:cone_conv}, we prove Theorem \ref{thm:cone_py}.
The proof there is close to the discussion in \cites{prod, comts}.
Moreover, in Section \ref{sec:main_proof}, we give another proof to Theorem \ref{thm:main}, which restricts Theorem \ref{thm:cone_py} to L\'evy families.
In the proof of Theorem \ref{thm:main}, we exactly prove that ``any $1$-Lipschitz function on a cone is almost a radial function if $n$ is large" in \eqref{eq:main_proof} based on the concentration of measure phenomenon.
From Section \ref{sec:cauchy} to \ref{sec:ergodic}, we mainly consider the Cauchy distribution.
At first, in Section \ref{sec:cauchy}, we describe fundamental properties of Cauchy distribution and prove Theorem \ref{thm:main_cauchy} applying Theorem \ref{thm:main}.
The refinement from Theorem \ref{thm:main_cauchy} to Theorem \ref{thm:main_cauchy_gene} is a simple argument based on previous works.
In Section \ref{sec:non-box}, we prove Theorem \ref{thm:non-box}.
The key is the continuity of the partial diameter with respect to the box distance.
After the proof of our results, in Section \ref{sec:inv}, we calculate several geometric and analytic quantities for the Cauchy space and the limit space.
Finally, in Section \ref{sec:ergodic}, we remark a relation to the ergodic theory.
We give some examples of non-ergodic systems generated by the Cauchy distribution.

\subsection*{Acknowledgement}
The authors would like to thank Professors Shouhei Honda, Atsushi Katsuda, Kazuhiro Kuwae, Hiroki Nakajima, Takashi Shioya, and Asuka Takatsu for their valuable comments and encouragement.
S.E.~is supported in part by JSPS KAKENHI Grant Numbers JP22H04942 and JP23K03158.
D.K.~is supported in part by JSPS KAKENHI Grant Numbers JP22K20338.
A.M.~is supported in part by JSPS KAKENHI Grant Numbers JP20K03598 and JP21H00977.

\section{Preliminaries}

In this section, we describe the definitions and some properties of metric measure space, the box distance, and the observable distance.
We use most of these notions along \cite{MMG}.
As for more details, we refer to \cite{MMG} and \cite{Grmv}*{Chapter 3$\frac{1}{2}_+$}.

\subsection{Metric measure spaces}
Let $(X, d_X)$ be a complete separable metric space and $m_X$ a Borel probability measure on $X$.
We call the triple $(X, d_X, m_X)$ a {\it metric measure space}, or an {\it mm-space} for short.
We sometimes say that $X$ is an mm-space, in which case the metric and the measure of $X$ are respectively indicated by $d_X$ and $m_X$.

\begin{dfn}[mm-Isomorphism]
Two mm-spaces $X$ and $Y$ are said to be {\it mm-isomorphic} to each other if there exists an isometry $f \colon \supp{m_X} \to \supp{m_Y}$ such that $f_\# m_X = m_Y$, where $f_\# m_X$ is the push-forward measure of $m_X$ by $f$. Such an isometry $f$ is called an {\it mm-isomorphism}. Denote by $\mathcal{X}$ the set of mm-isomorphism classes of mm-spaces.
\end{dfn}

Note that an mm-space $X$ is mm-isomorphic to $(\supp{m_X}, d_X , m_X)$. We assume that an mm-space $X$ satisfies
\[
X = \supp{m_X}
\]
unless otherwise stated.

\begin{dfn}[Lipschitz order]
Let $X$ and $Y$ be two mm-spaces. We say that $X$ ({\it Lipschitz}) {\it dominates} $Y$ and write $Y \prec X$ if there exists a $1$-Lipschitz map $f \colon X \to Y$ satisfying $f_\# m_X = m_Y$. We call the relation $\prec$ on $\X$ the {\it Lipschitz order}.
\end{dfn}

The Lipschitz order $\prec$ is a partial order relation on $\X$.

\subsection{Box distance and observable distance}
For a subset $A$ of a metric space $(X, d_X)$ and for a real number $r > 0$, we set
\[
U_r(A) := \{x \in X \mid d_X(x, A) < r\},
\]
where $d_X(x, A) := \inf_{a \in A} d_X(x, a)$.

\begin{dfn}[Prokhorov distance]
The {\it Prokhorov distance} $\prok(\mu, \nu)$ between two Borel probability measures $\mu$ and $\nu$ on a metric space $X$ is defined to be the infimum of $\varepsilon > 0$ satisfying
\[
\mu(U_\varepsilon(A)) \geq \nu(A) - \varepsilon
\]
for any Borel subset $A \subset X$.
\end{dfn}

The Prokhorov metric $\prok$ is a metrization of the weak convergence of Borel probability measures on $X$ provided that $X$ is a separable metric space. Note that if a map $f \colon X \to Y$ between two metric spaces $X$ and $Y$ is $1$-Lipschitz, then we have
\begin{equation}\label{eq:lip_prok}
\prok(f_\#\mu, f_\# \nu) \leq \prok(\mu, \nu)
\end{equation}
for any two Borel probability measures $\mu$ and $\nu$ on $X$.

\begin{dfn}[Ky Fan metric]
Let $(X, \mu)$ be a measure space and $(Y, d_Y)$ a metric space. For two $\mu$-measurable maps $f,g \colon X \to Y$, we define $\kf^\mu (f, g)$ to be the infimum of $\varepsilon \geq 0$ satisfying
\[
\mu(\{x \in X \mid d_Y(f(x),g(x)) > \varepsilon \}) \leq \varepsilon.
\]
The function $\kf^\mu$ is a metric on the set of $\mu$-measurable maps from $X$ to $Y$ by identifying two maps if they are equal to each other $\mu$-almost everywhere. We call $\kf^\mu$ the {\it Ky Fan metric}.
\end{dfn}

\begin{lem}[\cite{MMG}*{Lemma 1.26}]\label{lem:prok_kf}
Let $X$ be a topological space with a Borel probability measure $\mu$ and $Y$ a metric space. For any two Borel measurable maps $f, g \colon X \to Y$, we have
\[
\prok(f_\#\mu, g_\# \mu) \leq \kf^\mu (f, g).
\]
\end{lem}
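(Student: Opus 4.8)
The plan is to unwind both definitions and show that every $\varepsilon$ strictly exceeding $\kf^\mu(f,g)$ is an admissible value for the Prokhorov distance; taking the infimum over such $\varepsilon$ then yields the claim. Concretely, fix $\varepsilon > \kf^\mu(f,g)$. Since $\kf^\mu(f,g)$ is by definition the infimum of the set of $\delta \ge 0$ satisfying $\mu(\{x : d_Y(f(x),g(x)) > \delta\}) \le \delta$, I can choose some $\varepsilon'$ in this set with $\varepsilon' < \varepsilon$. Thus $\mu(\{x : d_Y(f(x),g(x)) > \varepsilon'\}) \le \varepsilon'$, or equivalently $\mu(E) \ge 1-\varepsilon'$, where I set $E := \{x : d_Y(f(x),g(x)) \le \varepsilon'\}$. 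The goal is then to verify the Prokhorov condition $(f_\#\mu)(U_\varepsilon(A)) \ge (g_\#\mu)(A) - \varepsilon$ for every Borel set $A \subseteq Y$.

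The key step is the set inclusion
\[
g^{-1}(A) \cap E \subseteq f^{-1}(U_\varepsilon(A)).
\]
Indeed, if $x \in g^{-1}(A) \cap E$, then $g(x) \in A$ and $d_Y(f(x),g(x)) \le \varepsilon' < \varepsilon$, whence $d_Y(f(x), A) \le d_Y(f(x),g(x)) < \varepsilon$, so $f(x) \in U_\varepsilon(A)$. The strict inequality is precisely where the choice $\varepsilon' < \varepsilon$ is needed: because $U_\varepsilon(A)$ is defined by the \emph{strict} condition $d_Y(\cdot,A) < \varepsilon$, using $\varepsilon$ in place of $\varepsilon'$ would only give $d_Y(f(x),A) \le \varepsilon$, and the inclusion could fail on the boundary. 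I expect this strict-versus-nonstrict point to be the only genuine subtlety; everything else is a formal manipulation of measures.

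From this inclusion, the monotonicity of $\mu$, and the elementary bound $\mu(g^{-1}(A) \cap E) \ge \mu(g^{-1}(A)) - \mu(X \setminus E) \ge \mu(g^{-1}(A)) - \varepsilon'$, I would deduce
\[
(f_\#\mu)(U_\varepsilon(A)) = \mu(f^{-1}(U_\varepsilon(A))) \ge \mu(g^{-1}(A)) - \varepsilon' = (g_\#\mu)(A) - \varepsilon' \ge (g_\#\mu)(A) - \varepsilon.
\]
All sets involved are Borel: the map $x \mapsto d_Y(f(x),g(x))$ is Borel measurable as the composition of the measurable map $(f,g)$ with the continuous distance function, $U_\varepsilon(A)$ is open, and $f,g$ are Borel, so the computation is legitimate. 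Since $A$ is arbitrary, this shows $\prok(f_\#\mu, g_\#\mu) \le \varepsilon$, and letting $\varepsilon \downarrow \kf^\mu(f,g)$ completes the proof.
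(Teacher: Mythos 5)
Your proof is correct; the paper does not reprove this lemma but cites \cite{MMG}*{Lemma 1.26}, whose argument is exactly the one you give: pass from the Ky Fan bound to the set $E=\{x : d_Y(f(x),g(x))\le \varepsilon'\}$, use the inclusion $g^{-1}(A)\cap E\subseteq f^{-1}(U_\varepsilon(A))$, and let $\varepsilon$ decrease to $\kf^\mu(f,g)$. Your observation that the strict inequality $\varepsilon'<\varepsilon$ is needed because $U_\varepsilon(A)$ is defined by the strict condition $d_Y(\cdot,A)<\varepsilon$ is precisely the one genuine subtlety, and you handle it correctly.
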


\begin{dfn}[Parameter]
Let $X$ be an mm-space. A map $\varphi \colon [0,1] \to X$ is called a {\it parameter} of $X$ if $\varphi$ is a Borel measurable map such that
\begin{equation*}
\varphi_\# \mathcal{L}^1 = m_X,
\end{equation*}
where $\mathcal{L}^1$ is the one-dimensional Lebesgue measure on $[0,1]$.
\end{dfn}

Note that any mm-space has a parameter (see \cite{MMG}*{Lemma 4.2}).

\begin{dfn}[Box distance]
We define the {\it box distance} $\square(X, Y)$ between two mm-spaces $X$ and $Y$ to be the infimum of $\varepsilon \geq 0$ satisfying that there exist parameters $\varphi \colon [0,1]  \to X$, $\psi \colon [0,1]  \to Y$, and a Borel subset $J \subset [0,1]$ with $\mathcal{L}^1(J) \geq 1 - \varepsilon$ such that
\[
|d_X(\varphi(s), \varphi(t)) - d_Y(\psi(s), \psi(t))| \le \ep
\]
for any $s,t \in J$.
\end{dfn}

\begin{thm}[\cite{MMG}*{Theorem 4.10}]
The box distance function $\square$ is a complete separable metric on $\mathcal{X}$.
\end{thm}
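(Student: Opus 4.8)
The plan is to establish the metric axioms, then separability and completeness, isolating the recovery of an honest mm-space from distance data as the crux. Symmetry is immediate from the symmetric roles of $\varphi,\psi$, one has $\square(X,Y)\le 1$ always (take $J=\emptyset$), and $\square(X,X)=0$ by taking $\varphi=\psi$ and $J=[0,1]$. The structural tool I would isolate first is a factorization lemma for parameters: \emph{for any two parameters $\psi_1,\psi_2$ of an mm-space $Y$ there is a Borel measure-preserving transformation $\tau$ of $([0,1],\mathcal{L}^1)$ with $\psi_2=\psi_1\circ\tau$ a.e.} This follows by disintegrating $\mathcal{L}^1$ over the fibers of $\psi_1$ and $\psi_2$: since both push $\mathcal{L}^1$ forward to $m_Y$, the conditional measures over $m_Y$-a.e.\ point are atomless standard probability spaces of equal total mass, so the isomorphism theorem for standard probability spaces (see \cite{MMG}) gives fiberwise measure-preserving bijections that assemble into $\tau$. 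Pre-composition by a measure-preserving map sends a parameter to a parameter and leaves $d_Y(\psi_1(\tau s),\psi_1(\tau t))=d_Y(\psi_2(s),\psi_2(t))$, so this lemma lets me bring two unrelated parameters of the same space onto a common footing.

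For the triangle inequality, suppose $\square(X,Y)<\varepsilon$ and $\square(Y,Z)<\delta$, with witnessing parameters $\varphi_1$ of $X$, $\psi_1$ of $Y$ and Borel $J_1$ ($\mathcal{L}^1(J_1)\ge 1-\varepsilon$), and $\psi_2$ of $Y$, $\chi_2$ of $Z$ and $J_2$ ($\mathcal{L}^1(J_2)\ge 1-\delta$). Apply the factorization lemma to the two parameters $\psi_1,\psi_2$ of $Y$ to obtain a measure-preserving $\tau$ with $\psi_1\circ\tau=\psi_2$ a.e., and put $J:=J_2\cap\tau^{-1}(J_1)$, so that $\mathcal{L}^1(J)\ge 1-\varepsilon-\delta$. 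For $s,t\in J$ we have $\tau s,\tau t\in J_1$, hence $|d_X(\varphi_1(\tau s),\varphi_1(\tau t))-d_Y(\psi_1(\tau s),\psi_1(\tau t))|\le\varepsilon$, while $s,t\in J_2$ gives $|d_Y(\psi_2(s),\psi_2(t))-d_Z(\chi_2(s),\chi_2(t))|\le\delta$; since $\psi_1(\tau s)=\psi_2(s)$ and $\psi_1(\tau t)=\psi_2(t)$, adding yields $|d_X(\varphi_1(\tau s),\varphi_1(\tau t))-d_Z(\chi_2(s),\chi_2(t))|\le\varepsilon+\delta$. The parameters $\varphi_1\circ\tau$ of $X$ and $\chi_2$ of $Z$ together with $J$ then show $\square(X,Z)\le\varepsilon+\delta$.

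Non-degeneracy is the main obstacle. Assume $\square(X,Y)=0$ with witnesses $\varphi_n,\psi_n,J_n$ and $\mathcal{L}^1(J_n)\ge 1-\varepsilon_n\to 1$. For each $k$, sample $(s_1,\dots,s_k)$ from $(\mathcal{L}^1)^{\otimes k}$ and form the distance-matrix maps $\Phi^X_n(s):=(d_X(\varphi_n(s_i),\varphi_n(s_j)))_{i,j}$ and $\Phi^Y_n(s):=(d_Y(\psi_n(s_i),\psi_n(s_j)))_{i,j}$ into $\R^{k^2}$ with the sup-norm. Because $(\varphi_n)_\#\mathcal{L}^1=m_X$, the law of $\Phi^X_n$ is the $k$-point distance-matrix distribution $\mu_X^{(k)}$ of $X$, independent of $n$, and likewise the law of $\Phi^Y_n$ is $\mu_Y^{(k)}$; on the event $\{s_i\in J_n\ \forall i\}$, of measure $\ge 1-k\varepsilon_n$, the two matrices differ entrywise by at most $\varepsilon_n$. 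Thus $\kf^{(\mathcal{L}^1)^{\otimes k}}(\Phi^X_n,\Phi^Y_n)\le\max\{\varepsilon_n,k\varepsilon_n\}$, so Lemma \ref{lem:prok_kf} gives $\prok(\mu_X^{(k)},\mu_Y^{(k)})\to 0$ and hence $\mu_X^{(k)}=\mu_Y^{(k)}$ for every $k$. By Gromov's reconstruction theorem (\cite{Grmv}), an mm-space is determined up to mm-isomorphism by the family of its distance-matrix distributions, so $X$ and $Y$ are mm-isomorphic; this invocation is the essential and least elementary step of the whole argument.

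For separability I would check that finite mm-spaces with rational weights and distances drawn from a fixed countable family form a countable $\square$-dense set: fixing a parameter $\varphi$ of $X$, the Borel function $D(s,t)=d_X(\varphi(s),\varphi(t))$ on $[0,1]^2$ is, by Lusin--Egorov, uniformly close off a small set to a function constant on the cells $I_i\times I_j$ of a finite partition $[0,1]=\bigsqcup_i I_i$, and the finite space on representatives $\varphi(s_i)$ with weights $\mathcal{L}^1(I_i)$ (after rationalizing into the $\varepsilon$) is $\square$-close to $X$. For completeness, given a $\square$-Cauchy sequence I pass to a subsequence with $\square(X_n,X_{n+1})<2^{-n}$ and, using the factorization lemma, build parameters $\varphi_n$ of $X_n$ coherently (from a witness $\alpha_n,\beta_n,\tilde J_n$ for the $n$-th step, write $\varphi_n=\alpha_n\circ\sigma_n$ and set $\varphi_{n+1}:=\beta_n\circ\sigma_n$), so that $D_n(s,t)=d_{X_n}(\varphi_n(s),\varphi_n(t))$ obeys $|D_{n+1}-D_n|\le 2^{-n}$ off a set of measure $\le 2^{-n}$. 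By Borel--Cantelli $D_n$ converges a.e.\ on $[0,1]^2$ to a measurable pseudometric $D_\infty$; the metric quotient of $(G,D_\infty)$ for a conull $G\subset[0,1]$, equipped with the push-forward of $\mathcal{L}^1$, is a limit mm-space $X_\infty$ with $\square(X_n,X_\infty)\le\sum_{m\ge n}2^{-m}\to 0$, and a Cauchy sequence with a convergent subsequence converges. The delicate points here are verifying that the a.e.\ limit $D_\infty$ satisfies the triangle inequality on a conull set and that the quotient genuinely lands in $\mathcal{X}$.
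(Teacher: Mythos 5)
The paper itself gives no proof of this statement---it is imported verbatim from \cite{MMG}*{Theorem 4.10}---so I am comparing your argument with the standard proof there. Your overall architecture (metric axioms; non-degeneracy via the $k$-point distance-matrix distributions and Gromov's mm-reconstruction theorem; separability by approximation with finite mm-spaces; completeness by aligning parameters along a rapidly Cauchy subsequence and passing to an a.e.\ limit of the pulled-back distance functions) is essentially the same as the standard route, and the non-degeneracy and separability parts are correct: in particular the observation that the law of $\Phi^X_n$ is independent of the choice of parameter, so that the exceptional-set estimate forces $\mu_X^{(k)}=\mu_Y^{(k)}$, is exactly the right reduction to the reconstruction theorem.

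There is, however, a genuine gap: the ``factorization lemma'' underpinning both your triangle inequality and your completeness argument is false as stated. It is not true that any two parameters $\psi_1,\psi_2$ of $Y$ satisfy $\psi_2=\psi_1\circ\tau$ a.e.\ for some measure-preserving Borel $\tau$ of $([0,1],\mathcal{L}^1)$. Take $Y=([0,1],|\cdot|,\mathcal{L}^1)$, $\psi_1(s)=2s \bmod 1$, $\psi_2=\id$. Any $\tau$ with $\psi_1\circ\tau=\psi_2$ a.e.\ must satisfy $\tau(s)\in\{s/2,(s+1)/2\}$ for a.e.\ $s$, and the requirement $\tau_\#\mathcal{L}^1=\mathcal{L}^1$ then forces the set $A=\{s \mid \tau(s)=s/2\}$ to satisfy $\mathcal{L}^1(A\cap F)=\tfrac12\mathcal{L}^1(F)$ for every Borel $F\subset[0,1]$, contradicting the Lebesgue density theorem. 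Your justification fails precisely where you assert that the conditional measures of $\mathcal{L}^1$ over the fibers are atomless: for $\psi_2=\id$ the disintegration consists of Dirac masses, and in general the two fiber measures need not be related by any measure-preserving map. What is true, and what the proof in \cite{MMG} actually establishes and uses, is the approximate version: for any two parameters $\psi_1,\psi_2$ of $Y$ and any $\ep>0$ there is a measure-preserving Borel $\tau$ with $\kf^{\mathcal{L}^1}(\psi_1\circ\tau,\psi_2)\le\ep$ (partition $Y$ into finitely many Borel pieces of diameter $<\ep$ plus a remainder of small measure, and match the $\psi_2$- and $\psi_1$-preimages of each piece, which are Lebesgue-measurable subsets of $[0,1]$ of equal measure). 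With this substitute your triangle-inequality computation survives with an extra error of $2\ep$ on a set of measure $\ge 1-\ep$, which vanishes as $\ep\to0$ since $\square$ is an infimum; the coherent construction of the parameters $\varphi_n$ in the completeness proof must be repaired in the same approximate fashion, at the cost of slightly enlarging the exceptional sets before invoking Borel--Cantelli. As written, though, both of these steps rest on a false lemma and do not stand.
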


\begin{lem}[\cite{MMG}*{Proposition 4.12}]\label{prop:mmg4.12}
Let $X$ be a complete separable metric space. For any two Borel probability measures $\mu$ and $\nu$ on $X$, we have
\[
\square((X, \mu), (X, \nu)) \leq 2\prok(\mu, \nu).
\]
\end{lem}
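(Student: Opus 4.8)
The plan is to transfer the two measures onto the common domain $[0,1]$ through a near-optimal coupling, and then read off the box distance from the triangle inequality. Fix an arbitrary $\ep > \prok(\mu,\nu)$. The first---and essentially only substantive---step is to invoke Strassen's theorem, which identifies the Prokhorov distance with the minimal Ky Fan distance over couplings: since $\prok(\mu,\nu) < \ep$, there exists a coupling $\pi$ of $\mu$ and $\nu$, that is, a Borel probability measure on $X \times X$ with $(\pr_1)_\# \pi = \mu$ and $(\pr_2)_\# \pi = \nu$, such that
\[
\pi\bigl(\{(x,y) \in X \times X \mid d_X(x,y) > \ep\}\bigr) \le \ep.
\]

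Next I would convert this coupling into a matched pair of parameters. Because $X$ is complete and separable, so is $X \times X$ with the product metric, and hence $(X \times X, \pi)$ is an mm-space; by the existence of parameters (\cite{MMG}*{Lemma 4.2}) there is a Borel map $\theta \colon [0,1] \to X \times X$ with $\theta_\# \mathcal{L}^1 = \pi$. Setting $\varphi := \pr_1 \circ \theta$ and $\psi := \pr_2 \circ \theta$, the marginal identities give $\varphi_\# \mathcal{L}^1 = \mu$ and $\psi_\# \mathcal{L}^1 = \nu$, so that $\varphi$ is a parameter of $(X, d_X, \mu)$ and $\psi$ is a parameter of $(X, d_X, \nu)$, both defined on the single domain $[0,1]$. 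Equivalently, in the language of the Ky Fan metric, $\kf^{\mathcal{L}^1}(\varphi, \psi) \le \ep$.

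The core estimate then follows by putting
\[
J := \{s \in [0,1] \mid d_X(\varphi(s), \psi(s)) \le \ep\}.
\]
Pushing the defining inequality of $\pi$ forward through $\theta$ shows $\mathcal{L}^1([0,1] \setminus J) = \pi(\{d_X > \ep\}) \le \ep$, hence $\mathcal{L}^1(J) \ge 1 - \ep$. For any $s, t \in J$, two applications of the triangle inequality give
\[
|d_X(\varphi(s), \varphi(t)) - d_X(\psi(s), \psi(t))| \le d_X(\varphi(s), \psi(s)) + d_X(\varphi(t), \psi(t)) \le 2\ep.
\]
Taking the threshold to be $2\ep$, both requirements in the definition of the box distance are met, since $\mathcal{L}^1(J) \ge 1 - \ep \ge 1 - 2\ep$ and the displayed discrepancy is at most $2\ep$. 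Therefore $\square((X, \mu), (X, \nu)) \le 2\ep$, and letting $\ep \downarrow \prok(\mu, \nu)$ yields the asserted bound.

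I expect the genuine content to lie entirely in the first step. The passage from smallness of $\prok(\mu,\nu)$ to an actual coupling is the attainment half of Strassen's theorem and is not purely formal; moreover one must keep careful track of the $\ep$'s, since the paper's one-sided definition of $\prok$ has to be reconciled with the symmetric coupling bound. After the coupling is in hand, the remaining steps---realizing $\pi$ by one parameter of the product space and extracting the factor $2$ from the triangle inequality---are routine, and the factor $2$ is precisely the unavoidable cost of comparing $d_X(\varphi(s), \varphi(t))$ with $d_X(\psi(s), \psi(t))$ through the two coupling errors, at $s$ and at $t$.
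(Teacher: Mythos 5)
Your proof is correct and follows essentially the same route as the source the paper cites for this lemma (\cite{MMG}*{Proposition 4.12}): Strassen's theorem yields a coupling $\pi$ with $\pi(\{d_X>\ep\})\le\ep$, a single parameter of $(X\times X,\pi)$ produces matched parameters of $(X,\mu)$ and $(X,\nu)$, and the triangle inequality at the two endpoints accounts for the factor $2$. The paper itself gives no proof of this statement, so there is nothing further to compare.
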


The following notion gives one of the conditions that are equivalent to the box-convergence.

\begin{dfn}[$\ep$-mm-Isomorphism]
Let $X$ and $Y$ be two mm-spaces and $f \colon X \to Y$ a Borel measurable map.
We say that $f$ is an {\it $\ep$-mm-isomorphism} for a real number $\ep \ge 0$ if there exists a Borel subset $\tX \subset X$ such that
\begin{enumerate}
\item $m_X(\tX) \ge 1 - \ep$,
\item $|d_X(x, x') - d_Y(f(x),f(x'))| \le \varepsilon$ for any $x, x' \in \tX$,
\item $\prok(f_\# m_X, m_Y) \le \ep$.
\end{enumerate}
We call $\tX$ a {\it non-exceptional domain} of $f$.
\end{dfn}

\begin{lem}[\cite{MMG}*{Lemma 4.22}]\label{lem:ep-mm-isom}
Let $X$ and $Y$ be two mm-spaces.
\begin{enumerate}
\item If there exists an $\ep$-mm-isomorphism $f \colon X \to Y$, then $\square(X, Y) \le 3\ep$.
\item If $\square(X, Y) < \ep$, then there exists a $3\ep$-mm-isomorphism $f\colon X \to Y$.
\end{enumerate}
\end{lem}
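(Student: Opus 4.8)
The plan is to prove the two implications by different routes, using the triangle inequality for $\square$ together with Proposition~\ref{prop:mmg4.12} for part~(1), and a disintegration-and-selection argument for part~(2). For part~(1), I would let $f$ be an $\ep$-mm-isomorphism with non-exceptional domain $\tX$ and introduce the intermediate space $Y':=(Y,d_Y,f_\#m_X)$. Fixing a parameter $\phi$ of $X$, the composition $f\circ\phi$ is a parameter of $Y'$, and on $J:=\phi^{-1}(\tX)$, with $\mathcal{L}^1(J)=m_X(\tX)\ge1-\ep$, the distance condition gives $|d_X(\phi(s),\phi(t))-d_Y(f\phi(s),f\phi(t))|\le\ep$; hence $(\phi,f\circ\phi)$ witnesses $\square(X,Y')\le\ep$. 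Proposition~\ref{prop:mmg4.12} applied to $f_\#m_X$ and $m_Y$ on $Y$ gives $\square(Y',Y)\le2\prok(f_\#m_X,m_Y)\le2\ep$, and the triangle inequality yields $\square(X,Y)\le3\ep$.

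For part~(2), I would start from $\square(X,Y)<\ep$, choosing parameters $\phi,\psi$ and a Borel set $J$ with $\mathcal{L}^1(J)\ge1-\ep$ and $|d_X(\phi(s),\phi(t))-d_Y(\psi(s),\psi(t))|\le\ep$ on $J$. I then form the coupling $\pi:=(\phi,\psi)_\#\mathcal{L}^1$ of $m_X$ and $m_Y$, disintegrate $\mathcal{L}^1=\int_X\lambda_x\,dm_X(x)$ along $\phi$, and set $\pi_x:=\psi_\#\lambda_x$, so that $\int_X\pi_x\,dm_X=m_Y$. The crucial point is that $\phi(s)=\phi(t)=x$ with $s,t\in J$ forces $d_X=0$ and hence $d_Y(\psi(s),\psi(t))\le\ep$; integrating over fibers bounds the fiberwise spread $\theta_x:=(\pi_x\otimes\pi_x)(\{d_Y>\ep\})$ by $\int_X\theta_x\,dm_X\le2\mathcal{L}^1(J^c)\le2\ep$. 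Writing $\bar B_\ep(y):=\{z\in Y:d_Y(z,y)\le\ep\}$, the averaging identity $\int_Y\pi_x(\bar B_\ep(y))\,d\pi_x(y)=1-\theta_x$ yields, for each $x$, a point $f(x)$ with $\pi_x(\bar B_\ep(f(x)))\ge1-\theta_x$, and a measurable selection theorem makes $x\mapsto f(x)$ Borel.

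It would then remain to verify the three defining conditions with linear constants. The Prokhorov bound $\prok(f_\#m_X,m_Y)\le2\ep$ follows, via the coupling characterization of the Prokhorov distance, from the coupling $\int_X(\delta_{f(x)}\otimes\pi_x)\,dm_X$, which charges $\{d_Y>\ep\}$ by at most $\int_X\theta_x\,dm_X\le2\ep$. For the domain and distance conditions I would set $a_x:=\lambda_x(J^c)$ and $b_x:=\pi_x(\bar B_\ep(f(x))^c)\le\theta_x$ and put $\tX:=\{x:a_x+b_x<1\}$; since $\int_X(a_x+b_x)\,dm_X\le3\ep$, Markov's inequality gives $m_X(\tX)\ge1-3\ep$. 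For $x,x'\in\tX$ both $J\cap\psi^{-1}(\bar B_\ep(f(x)))$ and $J\cap\psi^{-1}(\bar B_\ep(f(x')))$ carry positive conditional mass, so one may pick $s,s'\in J$ with $\phi(s)=x$, $\phi(s')=x'$, $\psi(s)\in\bar B_\ep(f(x))$, $\psi(s')\in\bar B_\ep(f(x'))$; the estimate on $J$ together with two triangle inequalities then gives $|d_X(x,x')-d_Y(f(x),f(x'))|\le3\ep$, so $f$ is the desired $3\ep$-mm-isomorphism.

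The hard part will be part~(2): the box distance supplies only the symmetric correspondence $\phi(s)\leftrightarrow\psi(s)$, and manufacturing a genuine Borel map $f\colon X\to Y$ is obstructed by the non-injectivity of $\phi$, which in general admits no measure-preserving section. The disintegration replaces the impossible section by the conditionals $\pi_x$, and the device that keeps all three defects linear in $\ep$ --- rather than losing a square root through a Chebyshev estimate on fiber diameters --- is the averaging identity, which extracts a concentration center $f(x)$ whose error is governed by the same spread $\theta_x$ that already integrates to $2\ep$.
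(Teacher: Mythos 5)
The paper does not prove this lemma at all --- it is quoted verbatim from \cite{MMG}*{Lemma 4.22} --- so there is no in-paper argument to compare against; I can only assess your proof on its own terms, and it is essentially correct. Part (1) is exactly the standard argument: factoring through $Y':=(Y,d_Y,f_\#m_X)$, reading off $\square(X,Y')\le\ep$ from the parameter $f\circ\phi$ and the non-exceptional domain, and closing with Proposition \ref{prop:mmg4.12} and the triangle inequality is precisely how this direction is proved in the reference. Part (2) is the genuinely nontrivial half, and your disintegration argument is sound: the bound $\theta_x\le 2\lambda_x(J^c)$, the averaging identity producing a center $f(x)$ with $\pi_x(\overline{B}_\ep(f(x)))\ge 1-\theta_x$, the coupling $\int_X(\delta_{f(x)}\otimes\pi_x)\,dm_X$ giving $\prok(f_\#m_X,m_Y)\le 2\ep$, and the Markov argument for $m_X(\tX)\ge 1-3\ep$ all check out, and the final two-triangle-inequality estimate does yield distortion $\le 3\ep$ on $\tX$ (after intersecting $\tX$ with the full-measure set on which $\lambda_x(\phi^{-1}(x))=1$, which you should state). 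The one step you wave at rather than prove is the Borel measurability of $x\mapsto f(x)$: you need the multifunction $x\mapsto\{y:\pi_x(\overline{B}_\ep(y))\ge 1-\theta_x\}$ to be closed-valued (true, since $y\mapsto\pi_x(\overline{B}_\ep(y))$ is upper semicontinuous) and measurable, and then invoke Kuratowski--Ryll-Nardzewski; alternatively, since the hypothesis $\square(X,Y)<\ep$ is strict, you can run the whole argument with some $\ep'<\ep$ and select $f(x)$ from a fixed countable dense subset of $Y$ using balls of radius $\ep'+\delta$, absorbing the loss into $3\ep$ and avoiding selection theorems entirely. This is a point to flesh out, not a gap in the strategy.
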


Given a metric space $X$, we denote by $\Lip_1(X)$ the set of 1-Lipschitz functions on $X$.
Moreover, for a map $p \colon S \to X$ from a set $S$, we set
\begin{equation*}
p^* \Lip_1(X) := \left\{ f \circ p \midd f \in \Lip_1(X) \right\}.
\end{equation*}
Note that, for any parameter $\varphi \colon [0,1] \to X$ of an mm-space $X$, the set $\varphi^* \Lip_1(X)$ consists of Borel measurable functions on $[0,1]$.

\begin{dfn}[Observable distance]
We define the {\it observable distance} $\conc(X, Y)$ between two mm-spaces $X$ and $Y$ by
\begin{equation*}
\conc(X, Y) := \inf_{\varphi, \psi} \haus(\varphi^* \Lip_1(X), \psi^* \Lip_1(Y)),
\end{equation*}
where $\varphi \colon [0,1] \to X$ and $\psi \colon [0,1] \to Y$ run over all parameters of $X$ and $Y$ respectively, and $\haus$ is the Hausdorff distance with respect to the metric $\kf^{\mathcal{L}^1}$. We say that a sequence $\{X_n\}_{n=1}^\infty$ of mm-spaces  {\it concentrates} to an mm-space $X$ if $X_n$ $\conc$-converges to $X$ as $n \to \infty$.
\end{dfn}

\begin{thm}[\cite{MMG}*{Theorem 5.13, Proposition 5.5}]
The observable distance function $\conc$ is a metric on $\mathcal{X}$. Moreover, we have
\[
\conc(X, Y) \leq \square(X, Y)
\]
for any two mm-spaces $X$ and $Y$.
\end{thm}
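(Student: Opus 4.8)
The plan is to prove the comparison $\conc \le \square$ first and then verify the metric axioms, treating the triangle inequality as the genuinely hard point.

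For the inequality, let $\varepsilon > \square(X,Y)$ and choose parameters $\varphi \colon [0,1]\to X$, $\psi\colon[0,1]\to Y$ and a Borel set $J\subset[0,1]$ with $\mathcal{L}^1(J)\ge 1-\varepsilon$ and $|d_X(\varphi(s),\varphi(t)) - d_Y(\psi(s),\psi(t))|\le\varepsilon$ for all $s,t\in J$, as furnished by the definition of the box distance. Given $f\in\Lip_1(X)$, I would define $g\colon Y\to\R$ by the McShane-type inf-convolution
\[
g(y) := \inf_{t\in J}\bigl\{ f(\varphi(t)) + d_Y(y,\psi(t)) \bigr\}.
\]
Restricting the infimum to $J$ and using the distance comparison there yields a finite lower bound at every $y$, so $g$ is a well-defined infimum of $1$-Lipschitz functions and hence itself $1$-Lipschitz. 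The decisive estimate is $|g(\psi(s)) - f(\varphi(s))|\le\varepsilon$ for $s\in J$: taking $t=s$ gives $g(\psi(s))\le f(\varphi(s))$, while for the reverse bound one writes, for $t\in J$, $f(\varphi(t))\ge f(\varphi(s)) - d_X(\varphi(s),\varphi(t))$ by $1$-Lipschitzness and $d_Y(\psi(s),\psi(t))\ge d_X(\varphi(s),\varphi(t))-\varepsilon$ by the comparison, so that the summand is $\ge f(\varphi(s))-\varepsilon$. Since the exceptional set $\{s : |g(\psi(s))-f(\varphi(s))|>\varepsilon\}$ is contained in $[0,1]\setminus J$ and thus has measure at most $\varepsilon$, we obtain $\kf^{\mathcal{L}^1}(f\circ\varphi, g\circ\psi)\le\varepsilon$. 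Running the symmetric construction shows $\haus(\varphi^*\Lip_1(X),\psi^*\Lip_1(Y))\le\varepsilon$, and letting $\varepsilon\downarrow\square(X,Y)$ gives $\conc(X,Y)\le\square(X,Y)$.

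Turning to the metric axioms, nonnegativity and symmetry are immediate, since $\haus$ is a symmetric nonnegative quantity for each choice of parameters and $\conc$ is its infimum. The substantive points are the triangle inequality and the non-degeneracy statement that $\conc(X,Y)=0$ implies $X$ and $Y$ are mm-isomorphic. The step I expect to be the main obstacle is the triangle inequality, precisely because the infimum defining $\conc$ runs over parameters of both spaces independently, so one cannot directly chain the Hausdorff distances for $(X,Y)$ and $(Y,Z)$ across a common ambient space. My plan is to establish a reparametrization lemma: for a fixed parameter $\psi$ of the middle space $Y$, one has $\conc(X,Y)=\inf_\varphi \haus(\varphi^*\Lip_1(X),\psi^*\Lip_1(Y))$, the infimum now over parameters of $X$ alone. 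The mechanism is that any two parameters of the same space differ, on a conull set and after passing to the standard-probability-space model, by a measure-preserving transformation $T$ of $[0,1]$, and the induced map $h\mapsto h\circ T$ is a $\kf^{\mathcal{L}^1}$-isometry of the ambient space of measurable functions; hence simultaneously precomposing $\varphi$ and $\psi$ with $T$ leaves $\haus(\varphi^*\Lip_1(X),\psi^*\Lip_1(Y))$ unchanged. With $\psi$ fixed, I select $\varphi$ near-optimal for $(X,Y)$ and $\chi$ near-optimal for $(Y,Z)$ and apply the ordinary triangle inequality for $\haus$ in the fixed ambient metric space,
\[
\haus(\varphi^*\Lip_1(X),\chi^*\Lip_1(Z)) \le \haus(\varphi^*\Lip_1(X),\psi^*\Lip_1(Y)) + \haus(\psi^*\Lip_1(Y),\chi^*\Lip_1(Z)),
\]
whose left-hand side bounds $\conc(X,Z)$ from above. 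Handling the atomic part of $m_Y$ in the reparametrization lemma is the delicate technical ingredient here.

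For non-degeneracy, the nontrivial implication is that $\conc(X,Y)=0$ forces an mm-isomorphism. I would take $\varepsilon_k\downarrow 0$ with parameters $\varphi_k,\psi_k$ placing $\varphi_k^*\Lip_1(X)$ and $\psi_k^*\Lip_1(Y)$ within $\varepsilon_k$ in $\haus$, and exploit that the distance functions $d_X(\cdot,x)$ belong to $\Lip_1(X)$ to transport the metric of $X$ onto $Y$ up to an $\varepsilon_k$-error. A diagonal and compactness argument, using completeness and separability together with tightness of the measures, then extracts a limiting measure-preserving isometry between $\supp m_X$ and $\supp m_Y$, which is the desired mm-isomorphism. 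This reconstruction is delicate but standard; the essential difficulty beyond the routine axioms is the parameter-dependence already confronted in the triangle inequality.
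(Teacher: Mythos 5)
This statement is imported by citation from \cite{MMG} (Theorem 5.13 and Proposition 5.5); the paper contains no proof of it, so I compare your proposal with the standard arguments there. Your proof of the inequality $\conc(X,Y)\le\square(X,Y)$ is correct and is essentially the standard one: the McShane-type inf-convolution over $J$ produces, for each $f\in\Lip_1(X)$, an honest $1$-Lipschitz function $g$ on $Y$ with $\kf^{\mathcal{L}^1}(f\circ\varphi,g\circ\psi)\le\varepsilon$, the two-sided estimate on $J$ is right, and the degenerate case $J=\emptyset$ (i.e.\ $\varepsilon\ge 1$) is vacuous because $\kf^{\mathcal{L}^1}\le 1$ always. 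No complaints about that half.

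The gaps are in the metric axioms. First, your reparametrization lemma is false as stated: two parameters of the same mm-space need \emph{not} differ by a measure-preserving transformation, even modulo null sets and even when the measure is atomless. Take $Y=([0,1],|\cdot|,\mathcal{L}^1)$, $\psi_0=\id$, and $\psi(t)=2t\bmod 1$; any $T$ with $\psi\circ T=\psi_0$ a.e.\ must satisfy $T(t)\in\{t/2,(t+1)/2\}$, and the pushforward of $\mathcal{L}^1$ under such a $T$ has density $2\cdot\mathbf{1}_A(2s)$ on $[0,1/2]$ for some Borel $A$, which can never be $\equiv 1$. So the delicate point is not the atoms of $m_Y$: only an \emph{approximate} factorization holds, namely that for any two parameters $\psi,\psi_0$ and any $\varepsilon>0$ there is a measure-preserving $T$ with $\kf^{\mathcal{L}^1}(\psi\circ T,\psi_0)\le\varepsilon$ (partition $Y$ into countably many Borel pieces of diameter $<\varepsilon$ and match preimages, which are atomless subsets of $[0,1]$ of equal measure). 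This weaker lemma still rescues your triangle-inequality scheme, since $\kf^{\mathcal{L}^1}(\psi\circ T,\psi_0)\le\varepsilon$ forces $\haus((\psi\circ T)^*\Lip_1(Y),\psi_0^*\Lip_1(Y))\le\varepsilon$ and precomposition by $T$ is a $\kf^{\mathcal{L}^1}$-isometric embedding, so you get $\inf_{\varphi}\haus(\varphi^*\Lip_1(X),\psi_0^*\Lip_1(Y))\le\conc(X,Y)+2\varepsilon$; but the lemma must be restated and proved in this approximate form. Second, the non-degeneracy ($\conc(X,Y)=0\Rightarrow X\cong Y$) is the genuinely hard half of the theorem, and your proposal only gestures at it: extracting ``a limiting measure-preserving isometry'' from the functions $d_X(\cdot,x)$ by ``a diagonal and compactness argument'' leaves unproved exactly the points that carry the difficulty --- that the candidate map is well defined $m_Y$-a.e., isometric on $\supp m_Y$, and pushes $m_Y$ to $m_X$. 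The route in \cite{MMG} instead passes through mutual approximate Lipschitz domination and the antisymmetry of the Lipschitz order (equivalently, Gromov's mm-reconstruction theorem). As written, this part is a plan rather than an argument.
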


\begin{lem}[\cite{MMG}*{Lemma 5.25}]\label{lem:enforce}
Let $p\colon X \to Y$ be a Borel measurable map. Then we have
\[
\conc(X, Y) \le 2\prok(p_\# m_X, m_Y) + \haus(\Lip_1(X), p^*\Lip_1(Y)),
\]
where $\haus$ is the Hausdorff distance with respect to the metric $\kf^{m_X}$.
\end{lem}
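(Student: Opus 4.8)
The plan is to insert the intermediate mm-space $(Y, p_\# m_X)$, which carries the same metric as $Y$ but the push-forward measure $p_\# m_X$, and to split the estimate by the triangle inequality for the metric $\conc$:
\[
\conc(X, Y) \le \conc(X, (Y, p_\# m_X)) + \conc((Y, p_\# m_X), (Y, m_Y)).
\]
The second summand compares only two measures sitting on a fixed metric space, so I would dispatch it at once by the comparison $\conc \le \square$ followed by Lemma~\ref{prop:mmg4.12}, obtaining
\[
\conc((Y, p_\# m_X), (Y, m_Y)) \le \square((Y, p_\# m_X), (Y, m_Y)) \le 2\,\prok(p_\# m_X, m_Y),
\]
which is exactly the first term of the asserted bound.

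For the first summand I would exploit that $p$ is measure-preserving when regarded as a map into $(Y, p_\# m_X)$. Fix any parameter $\varphi\colon [0,1]\to X$ of $X$ and set $\psi := p\circ\varphi$; then $\psi_\# \mathcal{L}^1 = p_\# \varphi_\# \mathcal{L}^1 = p_\# m_X$, so $\psi$ is a parameter of $(Y, p_\# m_X)$. The crux is the elementary fact that pulling back along a parameter preserves the Ky Fan metric: since $\varphi_\# \mathcal{L}^1 = m_X$, for all Borel functions $f, f'$ on $X$ and all $\ep$,
\[
\mathcal{L}^1(\{\, t \mid |f(\varphi(t)) - f'(\varphi(t))| > \ep \,\}) = m_X(\{\, x \mid |f(x) - f'(x)| > \ep \,\}),
\]
whence $\kf^{\mathcal{L}^1}(f\circ\varphi, f'\circ\varphi) = \kf^{m_X}(f, f')$. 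As both $\Lip_1(X)$ and $p^*\Lip_1(Y)$ are families of $m_X$-measurable functions on $X$, and $\varphi^*(p^* \Lip_1(Y)) = (p\circ\varphi)^*\Lip_1(Y) = \psi^*\Lip_1(Y)$, this isometry transports the Hausdorff distance over unchanged:
\[
\haus^{\kf^{\mathcal{L}^1}}(\varphi^*\Lip_1(X), \psi^*\Lip_1(Y)) = \haus^{\kf^{m_X}}(\Lip_1(X), p^*\Lip_1(Y)).
\]
Taking the infimum over parameters in the definition of $\conc$ then gives $\conc(X, (Y, p_\# m_X)) \le \haus^{\kf^{m_X}}(\Lip_1(X), p^*\Lip_1(Y))$, the second term of the bound, and adding the two estimates completes the argument.

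I expect the only delicate points to be bookkeeping rather than analysis: one must confirm that $p\colon X \to (Y, p_\# m_X)$ is genuinely measure-preserving and that $p\circ\varphi$ qualifies as a parameter (passing, if desired, to the support $\supp(p_\# m_X)$, on which $1$-Lipschitz functions restrict compatibly), and one must keep straight which Ky Fan metric each Hausdorff distance refers to. The analytic heart of the proof is the single identity $\kf^{\mathcal{L}^1}(f\circ\varphi, f'\circ\varphi) = \kf^{m_X}(f, f')$, which is immediate from $\varphi_\# \mathcal{L}^1 = m_X$; everything else is a direct quotation of Lemma~\ref{prop:mmg4.12} and the standard inequality $\conc \le \square$.
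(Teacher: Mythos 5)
Your proof is correct; the paper itself quotes this lemma from \cite{MMG}*{Lemma 5.25} without reproducing a proof, and your argument is essentially the standard one given there. The decomposition through the intermediate space $(Y, p_\# m_X)$, the bound $\conc \le \square \le 2\prok$ via Lemma \ref{prop:mmg4.12} for the measure change, and the key identity $\kf^{\Leb^1}(f\circ\varphi, f'\circ\varphi) = \kf^{m_X}(f, f')$ making $\psi = p\circ\varphi$ a parameter that transports the Hausdorff distance are exactly the ingredients of the cited proof.
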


\subsection{L\'evy family}
The observable diameter is one of the most fundamental invariants of an mm-space.

\begin{dfn}[Partial and observable diameter]\label{dfn:OD}
Let $X$ be an mm-space. For a real number $\alpha \leq 1$, we define the {\it partial diameter} $\PD(X; \alpha)$ of $X$ to be the infimum of $\diam{A}$, where $A \subset X$ runs over all Borel subsets with $m_X(A) \geq \alpha$ and $\diam{A}$ is the diameter of $A$. For a real number $\kappa > 0$, we define the {\it observable diameter} of $X$ to be
\begin{equation}
\OD(X; -\kappa) := \sup_{f \in \Lip_1(X)} \PD((\R, |\cdot|, f_\# m_X); 1 - \kappa).
\end{equation}
\end{dfn}

The observable diameter is an invariant under mm-isomorphism. Note that $\diam(X; 1-\kappa)$ and $\OD(X; -\kappa)$ are nonincreasing in $\kappa > 0$.

\begin{dfn}[L\'evy family]
A sequence $\{X_n\}_{n=1}^\infty$ of mm-spaces is called a {\it L\'evy family} if
\begin{equation*}
\lim_{n \to \infty} \OD(X_n; -\kappa) = 0
\end{equation*}
for any $\kappa > 0$.
\end{dfn}

We denote by $*$ the one-point mm-space equipped with the trivial distance and the Dirac measure.

\begin{prop}[\cite{MMG}*{Corollary 5.8}]
A sequence $\{X_n\}_{n=1}^\infty$ of mm-spaces is a L\'evy family if and only if it concentrates to $*$ as $n \to \infty$.
\end{prop}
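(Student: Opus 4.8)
The plan is to unwind both notions into a single statement about approximating $1$-Lipschitz functions by constants, and then to match that statement to the observable diameter via an elementary dictionary. First I would observe that the one-point space $*$ has a unique parameter $\psi$, namely the constant map, and that $\Lip_1(*)$ consists of all constant functions; hence $\psi^*\Lip_1(*)$ is precisely the set of constant functions on $[0,1]$. Consequently $\conc(X_n,*)=\inf_\varphi \haus(\varphi^*\Lip_1(X_n),\{\text{constant functions}\})$, where $\varphi$ ranges over parameters of $X_n$ and $\haus$ is taken with respect to $\kf^{\Leb^1}$. Since every constant lies in $\varphi^*\Lip_1(X_n)$ (constants are $1$-Lipschitz and unchanged by precomposition with $\varphi$), the one-sided Hausdorff bound ``every constant is near $\varphi^*\Lip_1(X_n)$'' is automatic, so the distance is governed entirely by how well each $f\circ\varphi$, $f\in\Lip_1(X_n)$, is approximated in $\kf^{\Leb^1}$ by a constant.

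The key bridge I would establish is the following elementary equivalence for a fixed $f\in\Lip_1(X)$ and a parameter $\varphi$: since $\varphi_\#\Leb^1=m_X$, the Lebesgue measure of $\{\,|f\circ\varphi-c|>\delta\,\}$ equals $(f_\#m_X)(\{\,|y-c|>\delta\,\})$, so $\kf^{\Leb^1}(f\circ\varphi,c)\le\delta$ is essentially the same as $f_\#m_X$ placing mass at least $1-\delta$ on an interval of radius $\delta$ about $c$. This directly ties $\kf^{\Leb^1}$-approximation by constants to the partial diameter $\PD((\R,|\cdot|,f_\#m_X);1-\delta)$, and hence to $\OD(X;-\delta)$ after taking the supremum over $f$.

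For the implication that a L\'evy family concentrates to $*$, I would fix $\ep>0$. Since $\OD(X_n;-\ep)\to 0$, for large $n$ and every $f\in\Lip_1(X_n)$ there is a Borel set $A\subset\R$ with $(f_\#m_{X_n})(A)\ge 1-\ep$ and $\diam A<\ep$; choosing any $c\in A$ gives $\kf^{\Leb^1}(f\circ\varphi,c)\le\ep$ for an arbitrary parameter $\varphi$. Combined with the automatic reverse inclusion noted above, this yields $\conc(X_n,*)\le\ep$ for $n$ large, whence $\conc(X_n,*)\to 0$. Conversely, suppose $\conc(X_n,*)\to 0$ and fix $\kappa>0$ and $\delta>0$. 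Setting $\ep:=\min\{\delta/2,\kappa\}$, for $n$ large I pick a parameter $\varphi$ realizing $\haus(\varphi^*\Lip_1(X_n),\{\text{constant functions}\})<\ep$. Then each $f\in\Lip_1(X_n)$ satisfies $\kf^{\Leb^1}(f\circ\varphi,c)<\ep$ for some constant $c$, so by the bridge $(f_\#m_{X_n})([c-\ep,c+\ep])\ge 1-\ep\ge 1-\kappa$, giving $\PD((\R,|\cdot|,f_\#m_{X_n});1-\kappa)\le 2\ep\le\delta$. Taking the supremum over $f$ yields $\OD(X_n;-\kappa)\le\delta$, so $\{X_n\}$ is a L\'evy family.

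The argument is largely bookkeeping; the one place demanding care is the juggling of the two roles played by the small parameter, simultaneously a tolerance in the Ky Fan metric and a threshold on measure, together with the monotonicity of $\PD(\cdot;\alpha)$ in $\alpha$, which is exactly what lets one pass from the threshold $1-\ep$ produced by concentration to the prescribed threshold $1-\kappa$ appearing in the observable diameter. Verifying the bridge and checking that the center $c$ may be taken inside the small-diameter set are the only non-formal points, and neither presents a genuine obstacle.
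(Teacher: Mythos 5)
Your argument is correct and is essentially the standard one: the paper does not prove this proposition but cites \cite{MMG}*{Corollary 5.8}, whose proof likewise reduces $\conc(X_n,*)$ to the $\kf^{\Leb^1}$-approximation of $1$-Lipschitz functions by constants and translates that, via $\varphi_\#\Leb^1=m_{X_n}$, into a two-sided comparison with the observable diameter. The bridge between the Ky Fan bound and the partial diameter, and the choice of the center $c$ inside the small-diameter set, are handled correctly.
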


\begin{dfn}[Concentration function]
Let $X$ be an mm-space.
We define the {\it concentration function} $\alpha_X$ of $X$ to be
\begin{equation*}
\alpha_{X}(r) := \sup_A{(1 - m_X(U_r(A)))}
\end{equation*}
for $r > 0$, where $A \subset X$ runs over all Borel subsets with $m_X(A) \geq 1/2$.
\end{dfn}

\begin{prop}[\cite{Led}*{Proposition 1.12}, \cite{MMG}*{Remark 2.28}]\label{prop:conc_fct}
The following {\rm (1)} and {\rm (2)} hold.
\begin{enumerate}
\item $\OD(X; -\kappa) \leq 2\inf\{r > 0 \, | \, \alpha_X(r) \leq \kappa/2\}$ for any $\kappa > 0$.
\item $\alpha_X(r) \leq \sup\left\{\kappa >0  \midd \OD(X; -\kappa) \geq r\right\}$ for any $r > 0$.
\end{enumerate}
In particular, a sequence $\{X_n\}_{n=1}^\infty$ of mm-spaces is a L\'evy family if and only if
\[
\lim_{n\to\infty}\alpha_{X_n}(r) = 0
\]
for any $r>0$.
\end{prop}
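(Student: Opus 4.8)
The plan is to prove the two inequalities \rm{(1)} and \rm{(2)} separately and then combine them with the monotonicity of $\OD(X;-\kappa)$ in $\kappa$ to obtain the L\'evy-family characterization. I would work directly from the definitions: $\OD(X;-\kappa)$ as a supremum over $f\in\Lip_1(X)$ of partial diameters of the pushforward $f_\#m_X$ on $\R$, and $\alpha_X$ as a supremum over Borel sets of measure at least $1/2$.

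For \rm{(1)}, I would fix $f\in\Lip_1(X)$ and a median $m_f$, and put $A^-=\{f\le m_f\}$ and $A^+=\{f\ge m_f\}$, both of $m_X$-measure at least $1/2$. For any $r$ with $\alpha_X(r)\le\kappa/2$ the definition of $\alpha_X$ gives $m_X(U_r(A^\pm))\ge 1-\kappa/2$, while the $1$-Lipschitz property yields $U_r(A^-)\subseteq\{f<m_f+r\}$ and $U_r(A^+)\subseteq\{f>m_f-r\}$. A union bound on the two exceptional sets then gives $f_\#m_X((m_f-r,m_f+r))\ge 1-\kappa$, and since this interval has diameter $2r$ we get $\PD((\R,|\cdot|,f_\#m_X);1-\kappa)\le 2r$. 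Taking the supremum over $f$ and the infimum over admissible $r$ proves \rm{(1)}.

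For \rm{(2)}, I would show that every $\kappa<\alpha_X(r)$ belongs to $\{\kappa>0:\OD(X;-\kappa)\ge r\}$, which forces the supremum to be at least $\alpha_X(r)$. For such a $\kappa$, pick a Borel set $A$ with $m_X(A)\ge 1/2$ and $m_X(U_r(A))<1-\kappa$, and take $f(x)=d_X(x,A)\in\Lip_1(X)$. Writing $\mu=f_\#m_X$, one has $\mu(\{0\})=m_X(\overline A)\ge 1/2$ and $\mu([0,r))=m_X(U_r(A))<1-\kappa$. The heart of the matter is the lower bound $\PD((\R,|\cdot|,\mu);1-\kappa)\ge r$: any Borel set $B$ with $\diam B<r$ either lies in $[0,r)$, whence $\mu(B)\le\mu([0,r))<1-\kappa$, or avoids $0$, whence $\mu(B)\le\mu((0,\infty))\le 1/2$, and in the second case $1/2<1-\kappa$ holds because $m_X(U_r(A))\ge m_X(A)\ge 1/2$ forces $\alpha_X(r)\le 1/2$ and hence $\kappa<1/2$. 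In either case $\mu(B)<1-\kappa$, so $\OD(X;-\kappa)\ge\PD((\R,|\cdot|,\mu);1-\kappa)\ge r$.

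The equivalence then follows by combining these estimates with monotonicity. If $\{X_n\}$ is a L\'evy family but $\alpha_{X_n}(r)\ge\delta$ along a subsequence, then \rm{(2)} and the monotonicity of $\OD$ give $\OD(X_n;-\delta/2)\ge r$ there, contradicting $\OD(X_n;-\delta/2)\to 0$; conversely, if $\alpha_{X_n}(r)\to 0$ for every $r$, then for fixed $\kappa>0$ and $\ep>0$ the bound $\alpha_{X_n}(\ep/2)\le\kappa/2$ for large $n$ makes $r=\ep/2$ admissible in the infimum of \rm{(1)}, so $\OD(X_n;-\kappa)\le\ep$. I expect the main obstacle to be the partial-diameter lower bound in \rm{(2)}: the case analysis on the position of a low-diameter set relative to the atom of $\mu$ at the origin, together with the observation $\alpha_X(r)\le 1/2$, is the one step that is not purely formal.
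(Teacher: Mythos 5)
Your proof is correct, and since the paper itself gives no argument (it cites Ledoux, Proposition 1.12 and Shioya, Remark 2.28), your write-up is essentially the standard proof behind those references: median sublevel/superlevel sets and a union bound for (1), and the distance function $d_X(\cdot,A)$ with the atom of $f_\#m_X$ at the origin for (2). The one step you flag as delicate --- ruling out a small-diameter set missing $0$ via $\alpha_X(r)\le 1/2$, hence $\kappa<1/2$ and $\mu(B)\le 1/2<1-\kappa$ --- is handled correctly, and the deduction of the L\'evy-family equivalence from (1), (2), and the monotonicity of $\OD(X;-\kappa)$ in $\kappa$ is sound.
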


\begin{ex}[\cite{Led}*{Theorem 2.3}]
Let $S^{n-1}$ be the $(n-1)$-dimensional unit sphere in $\R^n$. Assume that $S^{n-1}$ is equipped with the Riemannian distance or the restriction of Euclidean distance and the uniform probability measure $\sigma^{n-1}$. Then we have
\begin{equation}\label{eq:conc_sphere}
\alpha_{S^{n-1}}(r) \le e^{-\frac{n-2}{2}r^2}
\end{equation}
for any $r > 0$.
\end{ex}

\begin{dfn}[Median and L\'evy mean]
Let $X$ be a measure space with probability measure $\mu$ and $f \colon X \to \R$ a measurable function. A real number $m \in \R$ is called a {\it median} of $f$ if it satisfies
\begin{equation*}
\mu(\{ x \in X \mid  f(x) \geq m\}) \geq \frac{1}{2} \quad \text{and} \quad \mu(\{ x \in X \mid  f(x) \leq m\}) \geq \frac{1}{2}.
\end{equation*}
It is easy to see that the set of medians of $f$ is a nonempty bounded closed interval. The {\it L\'evy mean} $\lm(f; \mu)$ of $f$ with respect to $\mu$ is defined to be
\begin{equation*}
\lm(f; \mu) := \frac{\underline{m} +\overline{m}}{2},
\end{equation*}
where $\underline{m}$ is the minimum of medians of $f$, and $\overline{m}$ the maximum of medians of $f$.
\end{dfn}

\begin{prop}[\cite{MMG}*{Section 2.3}]
A sequence $\{X_n\}_{n=1}^\infty$ of mm-spaces is a L\'evy family if and only if for any $f_n \in \Lip_1(X_n)$,
\begin{equation*}
\lim_{n \to \infty} \kf^{m_{X_n}}(f_n, \lm(f_n; m_{X_n})) = 0.
\end{equation*}
\end{prop}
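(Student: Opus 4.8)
The plan is to reduce this ``for every sequence $\{f_n\}$'' statement to an equivalent comparison between two one-dimensional functionals, and then to carry out that comparison by elementary estimates on Borel probability measures on $\R$. The key observation is that both quantities in play depend only on the push-forward measure: for $f\in\Lip_1(X)$ write $\mu:=f_\# m_X$ and let $c$ be the L\'evy mean of $f$ (equivalently, of $\mu$). Since $m_X(\{\,|f-c|>\ep\,\})=\mu(\R\setminus[c-\ep,c+\ep])$, we obtain
\[
\kf^{m_X}(f,c)=V(\mu):=\inf\{\ep\ge 0\mid \mu(\R\setminus[c-\ep,c+\ep])\le\ep\},
\]
whereas by definition $\OD(X;-\kappa)=\sup_f D_\kappa(\mu)$ with $D_\kappa(\mu):=\PD((\R,|\cdot|,\mu);1-\kappa)$. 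A short subsequence argument (select near-maximizers and pad the remaining indices by constant functions) shows that the displayed condition, quantified over all sequences, is equivalent to $\sup_{f\in\Lip_1(X_n)}\kf^{m_{X_n}}(f,\lm(f;m_{X_n}))=\sup_\mu V(\mu)\to 0$. Thus the statement becomes: $\{X_n\}$ is a L\'evy family if and only if $\sup_\mu V(\mu)\to 0$, and I must compare $V(\mu)$ with $D_\kappa(\mu)$.

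The first, easy comparison is that a small $V$ forces a small partial diameter. Since $\ep\mapsto\mu(\R\setminus[c-\ep,c+\ep])$ is nonincreasing and right-continuous, the infimum defining $V(\mu)$ is attained, so $V(\mu)\le\ep$ gives $\mu([c-\ep,c+\ep])\ge 1-\ep$ and hence $D_\ep(\mu)\le 2\ep$. Because $D_\kappa$ is nonincreasing in $\kappa$, taking $\ep=\sup_\mu V(\mu)$ yields $\OD(X_n;-\kappa)\le 2\sup_\mu V(\mu)$ for every fixed $\kappa>0$ once $n$ is large. This already proves that $\sup_\mu V(\mu)\to 0$ implies the L\'evy property; in contrapositive form it manufactures a badly concentrated $1$-Lipschitz function out of a nonvanishing observable diameter.

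The reverse comparison, that a small partial diameter forces a small $V$, is where the only genuine care is required, and I expect it to be the main (though still elementary) obstacle. Fix $\kappa<1/2$ and suppose $D_\kappa(\mu)\le\delta$, so (passing to the convex hull of a near-optimal set) there is an interval $A=[a,b]$ with $b-a\le\delta$ and $\mu(A)\ge 1-\kappa>1/2$. The crucial point is that $\mu((-\infty,a))\le\kappa<1/2$ and $\mu((b,\infty))\le\kappa<1/2$ force \emph{every} median of $\mu$, and therefore the L\'evy mean $c$, to lie in $[a,b]$; this is exactly where the hypothesis $\kappa<1/2$ is indispensable. Consequently $[c-\delta,c+\delta]\supseteq[a,b]$, so $\mu([c-\delta,c+\delta])\ge 1-\kappa$ and $V(\mu)\le\max\{\delta,\kappa\}$.

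Finally I would combine the two comparisons. If $\{X_n\}$ is a L\'evy family, then $\OD(X_n;-\kappa)\to 0$ for every $\kappa>0$; fixing a small $\kappa<1/2$ and applying the reverse comparison uniformly with $\delta=\OD(X_n;-\kappa)$ gives $\sup_\mu V(\mu)\le\max\{\OD(X_n;-\kappa),\kappa\}$, whence $\limsup_n\sup_\mu V(\mu)\le\kappa$, and letting $\kappa\downarrow 0$ yields $\sup_\mu V(\mu)\to 0$. Together with the first comparison this establishes the equivalence, and hence the proposition. The remaining work is routine bookkeeping with the definitions of the Ky Fan metric, the L\'evy mean, and the observable diameter.
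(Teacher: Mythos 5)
Your argument is correct: reducing everything to the push-forward measure $\mu=f_\#m_X$, identifying $\kf^{m_X}(f,\lm(f;m_X))$ with the quantity $V(\mu)$, and then running the two comparisons (small $V$ gives small partial diameter via the attained infimum; small partial diameter at level $\kappa<1/2$ traps every median, hence the L\'evy mean, in the short interval) is exactly the standard route, and the $\kappa<1/2$ median-localization step is handled correctly. The paper itself states this proposition as a quoted result from \cite{MMG}*{Section 2.3} without proof, so there is no in-paper argument to compare against; your proof is essentially the one found there.
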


\begin{prop}[\cite{Led}*{Proposition 1.3}]\label{prop:conc_fct_est}
Let $X$ be an mm-space and let $f$ be a $L$-Lipschitz function on $X$. Then we have
\begin{equation}\label{eq:conc_fct_est}
m_X(\left\{x\in X \midd |f(x) - \lm(f;m_X)| \ge \ep \right\}) \le 2\alpha_{X}(\frac{\ep}{L})
\end{equation}
for any $\ep > 0$.
\end{prop}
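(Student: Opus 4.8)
The plan is to decompose the deviation set into an upper tail and a lower tail and to bound each separately by $\alpha_X(\ep/L)$ using the definition of the concentration function. Write $m := \lm(f; m_X)$ and let $\underline{m} \le \overline{m}$ denote the minimum and maximum medians of $f$, so that $m = (\underline{m}+\overline{m})/2$ satisfies $\underline{m} \le m \le \overline{m}$. Since $\{x \mid |f(x) - m| \ge \ep\} = \{f \ge m+\ep\} \cup \{f \le m-\ep\}$, subadditivity of $m_X$ reduces the claim to proving $m_X(\{f \ge m+\ep\}) \le \alpha_X(\ep/L)$ and $m_X(\{f \le m-\ep\}) \le \alpha_X(\ep/L)$; the factor $2$ in \eqref{eq:conc_fct_est} then appears from adding the two bounds.

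For the upper tail I would use the sublevel set $A_- := \{x \in X \mid f(x) \le \underline{m}\}$. Because $\underline{m}$ is a median, $m_X(A_-) \ge 1/2$, so $A_-$ is admissible in the supremum defining $\alpha_X$. The $L$-Lipschitz property forces $f(x) < \underline{m} + Lr$ for every $x \in U_r(A_-)$: choosing $a \in A_-$ with $d_X(x,a) < r$ gives $f(x) \le f(a) + L\,d_X(x,a) < \underline{m} + Lr$. Hence $\{f \ge \underline{m} + Lr\} \subset X \setminus U_r(A_-)$, and taking $r = \ep/L$ yields $m_X(\{f \ge \underline{m} + \ep\}) \le 1 - m_X(U_{\ep/L}(A_-)) \le \alpha_X(\ep/L)$. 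Since $m \ge \underline{m}$, we have $\{f \ge m+\ep\} \subset \{f \ge \underline{m}+\ep\}$, which gives the desired tail bound.

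The lower tail is handled symmetrically by the superlevel set $A_+ := \{x \in X \mid f(x) \ge \overline{m}\}$, which has $m_X(A_+) \ge 1/2$ because $\overline{m}$ is a median. The same Lipschitz argument shows $f(x) > \overline{m} - Lr$ on $U_r(A_+)$, so $\{f \le \overline{m} - \ep\} \subset X \setminus U_{\ep/L}(A_+)$ and $m_X(\{f \le \overline{m}-\ep\}) \le \alpha_X(\ep/L)$. Using $m \le \overline{m}$ to embed $\{f \le m-\ep\} \subset \{f \le \overline{m}-\ep\}$ completes this estimate, and summing the two tails finishes the proof.

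The only genuinely delicate point is that the L\'evy mean need not itself be a median, so a single reference set does not suffice; the remedy is precisely to test the upper tail against the smallest median $\underline{m}$ and the lower tail against the largest median $\overline{m}$, after which the monotone inclusions coming from $\underline{m} \le m \le \overline{m}$ do the rest. Beyond this bookkeeping, the argument is a direct combination of the definition of $\alpha_X$ with the Lipschitz bound, and I anticipate no substantial obstacle.
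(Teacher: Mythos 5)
Your proof is correct and is essentially the standard argument from Ledoux's Proposition~1.3, to which the paper delegates this statement without giving its own proof. The one point where the paper's statement differs from Ledoux's (using the L\'evy mean rather than a median) is exactly the point you identify and handle correctly, by testing the upper tail against $\underline{m}$ and the lower tail against $\overline{m}$.
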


\subsection{Pyramid}\label{subsec:py}

\begin{dfn}[Pyramid] \label{dfn:py}
A nonempty box-closed subset $\cP \subset \X$ is called a {\it pyramid} if it satisfies the following {\rm(1)} and {\rm(2)}.
\begin{enumerate}
\item If $X \in \cP$ and if $Y \prec X$, then $Y \in \cP$.
\item For any $Y, Y' \in \cP$, there exists $X \in \cP$ such that $Y,Y' \prec X$.
\end{enumerate}
We denote the set of pyramids by $\Pi$.

For an mm-space $X$, we define
\begin{equation*}
\cP X := \left\{Y \in \X \midd Y \prec X \right\},
\end{equation*}
which is a pyramid. We call $\cP X$ the {\it pyramid associated with $X$}.
\end{dfn}

We observe that $Y \prec X$ if and only if $\cP Y \subset \cP X$.
We see that $\{*\}$ and $\X$ are the minimum and maximum pyramids with respect to the inclusion respectively, that is, $\{*\} \subset \cP \subset \X$ for any pyramid $\cP$.

\begin{prop}[\cite{MMG}*{Lemma 6.10}]\label{prop:directed}
Let $\cD$ be a subset of $\X$ satisfying the condition {\rm(2)} of Definition \ref{dfn:py}.
Then
\[
\overline{\bigcup_{X \in \cD} \cP{X}}^{\,\square}
\]
is a pyramid.
\end{prop}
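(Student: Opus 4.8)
Write $\cP_0 := \bigcup_{W \in \cD} \cP W$ and $\cP := \overline{\cP_0}^{\,\square}$. Since $\cD \ne \emptyset$ and $* \in \cP W$ for every $W \in \cD$, the set $\cP_0$ is nonempty, and $\cP$ is its box-closure, so $\cP$ is a nonempty box-closed subset of $\X$. The plan is to check the two remaining pyramid conditions of Definition \ref{dfn:py} for $\cP$. I would first record that $\cP_0$ already satisfies them: it is downward closed, because a space dominated by some $Y \in \cP W$ is dominated by $W$ and so lies in $\cP W \subseteq \cP_0$; and, using condition (2) of $\cD$, it is directed, since given $Y \in \cP W_1$ and $Y' \in \cP W_2$ one may pick $W_3 \in \cD$ with $W_1, W_2 \prec W_3$, whence $Y, Y' \prec W_3 \in \cP_0$. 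The whole point is therefore to show that the box-closure preserves downward closedness and directedness.

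Everything rests on two continuity properties of the Lipschitz order under box convergence, which I would isolate as lemmas.
(B) \emph{Closedness}: if $A_n \prec B_n$, $\square(A_n, A) \to 0$ and $\square(B_n, B) \to 0$, then $A \prec B$.
(U) \emph{Approximation}: if $Y \prec X$ and $\square(X_n, X) \to 0$, then there exist $Y_n \prec X_n$ with $\square(Y_n, Y) \to 0$.
Both would be attacked by transporting the dominating $1$-Lipschitz map $f \colon X \to Y$ through the approximate identifications realizing $\square(X_n, X) \to 0$ (the $\varepsilon$-mm-isomorphisms of Lemma \ref{lem:ep-mm-isom}(2), or the parameters of the box distance). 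For (U) I would fix parameters $\varphi \colon [0,1] \to X$ and $\varphi_n \colon [0,1] \to X_n$ whose pulled-back distances agree up to $\varepsilon_n := \square(X_n, X)$ off a set of measure $\le \varepsilon_n$, set $\psi := f \circ \varphi$ (a parameter of $Y$), and define $Y_n$ as the quotient of $[0,1]$ by the length metric generated by $\min\{d_{X_n}(\varphi_n\,\cdot\,, \varphi_n\,\cdot\,),\, d_Y(\psi\,\cdot\,, \psi\,\cdot\,)\}$. This metric is dominated by the pulled-back $X_n$-metric, so the identity of $[0,1]$ descends to a genuine $1$-Lipschitz map exhibiting $Y_n \prec X_n$, while on the good set it differs from the pulled-back $Y$-metric by $O(\varepsilon_n)$ (using $d_Y(f\,\cdot\,,f\,\cdot\,) \le d_X$ from $f$ being $1$-Lipschitz), giving $\square(Y_n, Y) \to 0$ via Lemmas \ref{prop:mmg4.12} and \ref{lem:prok_kf}. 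For (B) I would instead pass to the limit: once the convergences are realized on fixed parameters, the graphs of the $f_n$, viewed as couplings of $m_{B_n}$ and $m_{A_n}$, form a tight family, and any weak subsequential limit is supported on the graph of a $1$-Lipschitz map realizing $A \prec B$. I expect the rounding in (U) to be the main obstacle: the transported map $f \circ \varphi_n$ is only \emph{additively} almost $1$-Lipschitz, the target $Y$ carries no linear structure to exploit, and one must argue that chains of the length metric do not shortcut through the small exceptional set.

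Granting (U), condition (1) is immediate. If $X \in \cP$ and $Y \prec X$, choose $X_n \in \cP_0$ with $\square(X_n, X) \to 0$; then (U) gives $Y_n \prec X_n$ with $\square(Y_n, Y) \to 0$, downward closedness of $\cP_0$ forces $Y_n \in \cP_0$, and box-closedness yields $Y \in \overline{\cP_0}^{\,\square} = \cP$.

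Granting (B), condition (2) follows by a coupling argument that sidesteps the non-compactness of $(\X, \square)$. Let $Y, Y' \in \cP$ and pick $Y_n, Y'_n \in \cP_0$ with $Y_n \to Y$ and $Y'_n \to Y'$. By directedness of $\cP_0$ there are $Z_n \in \cP_0$ and $1$-Lipschitz maps $f_n \colon Z_n \to Y_n$, $g_n \colon Z_n \to Y'_n$ with $(f_n)_\# m_{Z_n} = m_{Y_n}$ and $(g_n)_\# m_{Z_n} = m_{Y'_n}$. Form the coupling space $C_n := (Y_n \times Y'_n,\, d_n,\, (f_n, g_n)_\# m_{Z_n})$, where $d_n((y,y'),(z,z')) = \max\{d_{Y_n}(y,z),\, d_{Y'_n}(y',z')\}$. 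Then $(f_n, g_n) \colon Z_n \to C_n$ is genuinely $1$-Lipschitz and pushes $m_{Z_n}$ to $m_{C_n}$, so $C_n \prec Z_n$ and hence $C_n \in \cP_0$, while the coordinate projections show $Y_n, Y'_n \prec C_n$. Since the measures $m_{C_n}$ are couplings of the convergent, hence tight, marginals $m_{Y_n}, m_{Y'_n}$, the family $\{C_n\}$ is box-precompact, so after passing to a subsequence $C_n \to C$ for some $C \in \cP$. Applying (B) to $Y_n \prec C_n$ and to $Y'_n \prec C_n$ gives $Y, Y' \prec C$ with $C \in \cP$. Thus $\cP$ satisfies condition (2), and therefore $\cP$ is a pyramid.
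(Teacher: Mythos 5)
The paper offers no argument for this proposition at all --- it is imported directly from \cite{MMG}*{Lemma 6.10} --- so there is nothing in-paper to compare against; I can only assess your reconstruction on its own terms. Your architecture is the right one: reduce everything to the closedness (B) and the approximation property (U) of the Lipschitz order under box convergence, then get directedness of the closure by a coupling trick. Indeed (U) is precisely \cite{MMG}*{Lemma 6.10(1)}, which this paper itself invokes as a black box in the proof of Theorem \ref{thm:cone_py}(1). Your sketch of (B) (tightness of the transported graph couplings, with the limit coupling supported on the graph of a $1$-Lipschitz map) is correct, and your verification of condition (2) of Definition \ref{dfn:py} via the $\max$-metric coupling space $C_n$ --- which is genuinely dominated by $Z_n$, dominates both $Y_n$ and $Y_n'$, and is box-precompact by the uniform covering criterion for $\square$-precompactness --- is a clean and valid way to avoid the non-compactness of $(\X,\square)$.

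The genuine gap is your construction for (U), and it is not merely the rounding issue you flag: the chain metric generated by $\rho_0(s,t)=\min\{d_{X_n}(\varphi_n s,\varphi_n t),\,d_Y(\psi s,\psi t)\}$ can shortcut through the exceptional set and destroy the geometry of $Y$. Take $X=Y=\{p,q\}$ with $d(p,q)=1$ and mass $1/2$ each, $f=\id$, and $X_n=\{p,q,r\}$ with $d(p,r)=d(q,r)=1/2$ and $m_{X_n}(r)=\ep_n$. Then $\square(X_n,X)\to 0$, and any admissible parameters must send a set $U_n$ of measure $\ep_n$ (contained in the exceptional set) to $r$, while $\psi$ sends $U_n$ into $\{p,q\}$. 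If $u\in U_n$ with $\psi u=q$, then for good points $s,t$ with $\psi s=p$, $\psi t=q$ the chain $s\to u\to t$ has length $\min\{1/2,1\}+\min\{1/2,0\}=1/2$, and if $U_n$ meets both $\psi^{-1}(p)$ and $\psi^{-1}(q)$ the two classes collapse to distance $0$; either way $Y_n$ stays a definite box distance away from $Y$. The obvious repair --- charging the full $d_{X_n}$-distance to any link touching the exceptional set --- still loses an additive $\ep_n$ per chain segment, which is not controlled over arbitrarily long chains. The robust route is the ``$1$-Lipschitz up to an additive error'' machinery: $f\circ F_n$, with $F_n\colon X_n\to X$ the $3\ep_n$-mm-isomorphism of Lemma \ref{lem:ep-mm-isom}(2), is $1$-Lipschitz up to $O(\ep_n)$ and satisfies $\prok((f\circ F_n)_\# m_{X_n},m_Y)\le O(\ep_n)$, and the quantitative statement behind Lemma \ref{lem:error_in_py} (\cite{comts}*{Section 4}) converts such a map into an honest $Y_n\prec X_n$ with $\square(Y_n,Y)\to 0$. (Note that citing Lemma \ref{lem:error_in_py} as stated would be circular here, since it presupposes that $\cP$ is a pyramid; what you need is the underlying construction of a dominated space box-close to $Y$.) Until (U) is established by some such argument, condition (1) for the closure remains unproven.
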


We define the weak convergence of pyramids as follows.
This is also known as the Kuratowski convergence as subsets of $(\X, \square)$.

\begin{dfn}[Weak convergence]
We say that a sequence $\{\cP_n\}_{n=1}^\infty$ of pyramids {\it converges weakly to} a pyramid $\cP$ as $n \to \infty$ if the following (1) and (2) are both satisfied.
\begin{enumerate}
\item For any mm-space $X \in \cP$, we have
\[
\lim_{n \to \infty} \square(X, \cP_n) = 0.
\]
\item For any mm-space $X \in \X \setminus \cP$, we have
\[
\liminf_{n \to \infty} \square(X, \cP_n) > 0.
\]
\end{enumerate}
\end{dfn}

\begin{thm}[\cites{Grmv,MMG}]\label{thm:compactification}
There exists a metric, denoted by $\rho$, on $\Pi$ such that the following {\rm (1) -- (4)} hold.
\begin{enumerate}
\item $\rho$ is compatible with weak convergence.
\item The map
$\iota \colon \X \ni X \mapsto \cP X \in \Pi$
is a $1$-Lipschitz topological embedding map with respect to
$\conc$ and $\rho$.
\item $\Pi$ is $\rho$-compact.
\item $\iota(\X)$ is $\rho$-dense in $\Pi$.
\end{enumerate}
\end{thm}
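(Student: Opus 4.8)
The plan is to realise each pyramid as a box-closed subset of the complete separable space $(\X,\square)$ and to metrise weak (that is, Painlev\'e--Kuratowski) convergence through its distance function $D_\cP:=\square(\cdot,\cP)$. Fixing a countable $\square$-dense set $\{X_k\}\subset\X$ and recalling that $\square\le 1$, I would set
\[
\rho(\cP,\cQ):=\sum_{k=1}^\infty 2^{-k}\,|D_\cP(X_k)-D_\cQ(X_k)|.
\]
That $\rho$ is a bounded pseudometric is clear; it separates points because each $D_\cP$ is $1$-Lipschitz in its argument and $\cP=D_\cP^{-1}(0)$ is box-closed, so two pyramids agreeing on the dense family $\{X_k\}$ have identical distance functions and hence coincide. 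This first step is routine.

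Next I would prove that $\rho$-convergence implies weak convergence. If $\rho(\cP_n,\cP)\to0$, then $D_{\cP_n}(X_k)\to D_\cP(X_k)$ for every $k$, and the uniform $1$-Lipschitz bound together with density upgrades this to $D_{\cP_n}(X)\to D_\cP(X)$ for every $X$; such pointwise (Wijsman) convergence of distance functions always implies Painlev\'e--Kuratowski convergence, which is exactly conditions (1)--(2) in the definition of weak convergence. The reverse implication is the delicate one, since in a space that is not locally compact Kuratowski convergence need not force convergence of distance functions. Rather than attack it directly, I would deduce it from compactness: granting that $(\Pi,\rho)$ is compact and that weak limits are unique — if $\cP_n\to\cP$ and $\cP_n\to\cP'$ weakly, conditions (1) and (2) immediately give $\cP\subseteq\cP'\subseteq\cP$ — a standard subsequence argument shows that weak convergence forces $\rho$-convergence.

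The heart of the matter, and the step I expect to be hardest, is the $\rho$-compactness of $\Pi$. I would prove it by sequential compactness: a diagonal extraction makes $D_{\cP_n}(X_k)$ converge for every $k$, and the $1$-Lipschitz bound then produces a $1$-Lipschitz limit $D$ with $D_{\cP_n}(X)\to D(X)$ for all $X$ along the subsequence. The natural candidate limit is the box-closure of the downward closure generated by $\{X:D(X)=0\}$, which Proposition \ref{prop:directed} recognises as a pyramid once its directedness is checked. The \emph{main obstacle} is precisely to show that $D$ equals the distance function to this limit set and that the set is directed: both amount to manufacturing a genuine limiting dominator out of dominating spaces $Z_n\in\cP_n$, which is not automatic because $\square$-bounded families are not precompact. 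I expect to handle this with a Gromov-type precompactness applied to the relevant bounded pieces of the spaces involved, combined with the monotonicity of the Lipschitz order — this is the mechanism that distinguishes pyramids from arbitrary closed sets and makes them behave compactly.

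Finally, for the embedding statement I would check that $\iota\colon X\mapsto\cP X$ is injective (by antisymmetry of $\prec$, $\cP X=\cP Y$ forces $X\prec Y\prec X$, hence an mm-isomorphism), that it is $1$-Lipschitz from $(\X,\conc)$ to $(\Pi,\rho)$ — which, since $\sum_k 2^{-k}=1$, reduces to the comparison estimate $|\square(Z,\cP X)-\square(Z,\cP Y)|\le\conc(X,Y)$ expressing that $\cP X$ moves no faster than the observable distance — and that $\iota^{-1}$ is continuous, so that $\iota$ is a topological embedding. Density of $\iota(\X)$ follows because every pyramid $\cP$ equals $\bigcup_{X\in\cP}\cP X$; using the directedness axiom and separability to select an increasing sequence $X_m\in\cP$ cofinal in $(\cP,\prec)$, one gets $\cP X_m\to\cP$ weakly, hence $\rho$-convergent. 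The comparison estimate here and the realisation/directedness argument in the compactness step are the two places where the specific interplay of the Lipschitz order with the box and observable distances is essential; the remaining steps are formal.
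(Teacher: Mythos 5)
First, note that the paper itself gives no proof of Theorem \ref{thm:compactification}: it is quoted from Gromov and from Shioya's book \cite{MMG}, so your attempt can only be measured against the construction there. Your overall architecture (build a metric; show metric convergence implies weak convergence; prove compactness; recover the converse from uniqueness of weak limits; then treat the embedding and density) is the right shape, and the routine parts (separation of points, Wijsman implies Kuratowski, uniqueness of weak limits, density via a cofinal sequence) are fine. But the step you yourself flag as the heart of the matter --- compactness --- is exactly where your plan breaks down, and the mechanism you propose for it does not exist. After the diagonal extraction you are left with a $1$-Lipschitz function $D$ that is a pointwise limit of the distance functions $\square(\cdot,\cP_n)$, and you must show that $D$ is again the distance function of a pyramid; concretely, you must produce a common dominator in the limit for $X,X'$ with $D(X)=D(X')=0$ out of dominators $W_n\in\cP_n$, and you must produce box-limits of near-minimizers $Z_n\in\cP_n$ to get $D\le\square(\cdot,\cQ)$. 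Both require a box-precompactness statement for $\square$-bounded families, and no such statement is true: the unit spheres $S^n$ have uniformly bounded diameter and no box-convergent subsequence, and there is no ``Gromov-type precompactness'' for the box distance without curvature or doubling hypotheses. This failure is not incidental --- it is the entire reason the compactification $\Pi$ is nontrivial. The proof in \cite{MMG} avoids it by never trying to take limits of the spaces themselves: to each pyramid one attaches, for each $N$ and $R$, the closed set of push-forward measures $\Phi_\# m_X$ with $X\in\cP$ and $\Phi=(f_1,\dots,f_N)$, $f_i\in\Lip_1(X)$ bounded by $R$, which lives in the \emph{compact} metric space of Borel probability measures on $[-R,R]^N$ with the Prokhorov metric; $\rho$ is a weighted sum of Hausdorff distances between these measurement sets, compactness of $\Pi$ reduces to Blaschke's selection theorem in that hyperspace, and Gromov's mm-reconstruction theorem is what guarantees that the limiting measurements come from (and determine) a pyramid. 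That transfer from the spaces to their observable data is the missing idea.

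There is also a concrete false step in your treatment of the embedding. The comparison estimate $|\square(Z,\cP X)-\square(Z,\cP Y)|\le\conc(X,Y)$ is not true: take $Y=*$, $X=S^N$ the unit sphere, and $Z=X_k$ a point of your dense set with $\square(X_k,S^N)<\delta$. Then $\square(Z,\cP S^N)<\delta$ while $\square(Z,\cP *)=\square(Z,*)\ge\square(S^N,*)-\delta$, and $\square(S^N,*)$ is bounded below uniformly in $N$ (the spheres do not box-converge to a point, since any subset of measure $\ge 1/2$ has diameter bounded below), whereas $\conc(S^N,*)\to 0$. So the left-hand side stays bounded away from $0$ while the right-hand side vanishes; the underlying reason is that $\cP X$ is in general nowhere near $\cP Y$ in the box-Hausdorff sense even when $\conc(X,Y)$ is tiny. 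Whether your $\rho$ even makes $\iota$ continuous at such points is again the Wijsman-versus-Kuratowski question that you deferred to the unproved compactness. In the measurement-based metric of \cite{MMG}, by contrast, the $1$-Lipschitz property of $\iota$ is essentially immediate because the observable distance is defined through the very same families of $1$-Lipschitz functions that define the measurements.
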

In particular, $(\Pi,\rho)$ is a compactification of $(\X,\conc)$.
We often identify $X$ with $\cP X$.

\subsection{Dissipation}\label{subsec:dissip}
Dissipation is the opposite notion to concentration.
We omit to state the definition of the infinite dissipation
(see \cite{MMG}*{Definition 8.1} for the definition).
Instead, we state the following proposition.
Let $\{X_n\}_{n=1}^\infty$ be a sequence of mm-spaces.

\begin{prop}[see \cite{MMG}*{Proposition 8.5(2)}]
The sequence $\{X_n\}_{n=1}^\infty$ infinitely dissipates
if and only if $\{\cP X_n\}_{n=1}^\infty$ converges weakly to $\cX$ as $n\to\infty$.
\end{prop}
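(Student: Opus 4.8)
The plan is to unwind weak convergence to the maximal pyramid $\cX$ into a single box-approximation statement and then match it against the cluster description of infinite dissipation. Since $\cX$ is the maximum pyramid, $\X\setminus\cX=\emptyset$, so condition~(2) in the definition of weak convergence is vacuous, and $\{\cP X_n\}$ converges weakly to $\cX$ if and only if $\square(X,\cP X_n)=\inf_{Y\prec X_n}\square(X,Y)\to 0$ for every mm-space $X$. As test objects I would use the ``maximally dissipated'' models: let $\Delta_{N,R}$ be the $N$-point mm-space with uniform measure and all mutual distances equal to $R$. A space dominated by $\Delta_{N,R}$ is an mm-space with at most $N$ atoms, masses in $\frac1N\Z$, and diameter $\le R$; these are box-dense in $\X$ as $N,R\to\infty$, and since $\{\Delta_{N,R}\}$ is directed under $\prec$, Proposition~\ref{prop:directed} gives $\overline{\bigcup_{N,R}\cP\Delta_{N,R}}^{\,\square}=\cX$. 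This is the conceptual reason it suffices to control the models, though the argument below handles a general $X$ directly.

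\emph{Dissipation $\Rightarrow$ weak convergence.} Given $N,R$, infinite dissipation (\cite{MMG}*{Definition 8.1}) yields, for large $n$, Borel sets $A_1,\dots,A_N\subset X_n$ with $m_{X_n}(A_i)\to 1/N$ and $d_{X_n}(A_i,A_j)\ge R$ for $i\ne j$. Set $\phi_i(x):=\min(d_{X_n}(x,A_i),R)$, each $1$-Lipschitz, and $F_n:=(\phi_1,\dots,\phi_N)\colon X_n\to(\R^N,\ell^\infty)$; this is $1$-Lipschitz since $\|F_n(x)-F_n(x')\|_\infty=\max_i|\phi_i(x)-\phi_i(x')|\le d_{X_n}(x,x')$. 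On $A_i$ one has $\phi_i=0$ and $\phi_j=R$ for $j\ne i$, so $F_n$ collapses $A_i$ to a point $p_i$, and the $p_i$ are mutually at $\ell^\infty$-distance $R$. Thus $Y_n:=(\R^N,\ell^\infty,(F_n)_\#m_{X_n})\prec X_n$, and because $(F_n)_\#m_{X_n}$ carries mass $m_{X_n}(A_i)\to 1/N$ to $p_i$ with leftover mass $\to 0$, Lemma~\ref{prop:mmg4.12} gives $\square(Y_n,\Delta_{N,R})\to 0$. For an arbitrary $X$, I would pick $N,R$ and a $1$-Lipschitz $\pi\colon\Delta_{N,R}\to X'$ with $\square(X,X')<\delta$, embed the finite space $X'$ isometrically into some $(\R^m,\ell^\infty)$, and extend $\pi|_{\{p_i\}}$ coordinatewise by McShane to a $1$-Lipschitz $\tilde\pi\colon(\R^N,\ell^\infty)\to(\R^m,\ell^\infty)$. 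Then $\tilde\pi\circ F_n$ is $1$-Lipschitz on $X_n$, so $Z_n:=\tilde\pi(Y_n)\prec X_n$, and tracking masses gives $\square(Z_n,X')\to 0$, whence $\square(X,\cP X_n)\le\delta+o(1)$; as $\delta$ is arbitrary, weak convergence follows.

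\emph{Weak convergence $\Rightarrow$ dissipation.} Fix $N,\Delta$ and apply the hypothesis to $X=\Delta_{N,2\Delta}$: there is $Y_n\prec X_n$ with $\square(\Delta_{N,2\Delta},Y_n)\to 0$, so Lemma~\ref{lem:ep-mm-isom}(2) yields $3\ep_n$-mm-isomorphisms $h_n\colon\Delta_{N,2\Delta}\to Y_n$ with $\ep_n\to 0$. Once $3\ep_n<1/N$, each atom $z_i$ (mass $1/N$) lies in the non-exceptional domain, so the $h_n(z_i)$ are mutually $\ge 2\Delta-3\ep_n$ apart, while condition~(3) of the $\ep$-mm-isomorphism forces the $m_{Y_n}$-mass of a small ball $B_i$ around $h_n(z_i)$ to be $\ge 1/N-o(1)$; for small radius the $B_i$ are mutually $\ge\Delta$ apart for large $n$. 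Pulling the $B_i$ back through the $1$-Lipschitz domination map $g_n\colon X_n\to Y_n$ (with $(g_n)_\#m_{X_n}=m_{Y_n}$) gives $A_i:=g_n^{-1}(B_i)$ with $m_{X_n}(A_i)\ge 1/N-o(1)$ and, since $g_n$ does not increase distances, $d_{X_n}(A_i,A_j)\ge d_{Y_n}(B_i,B_j)\ge\Delta$. As $N,\Delta$ are arbitrary, $\{X_n\}$ infinitely dissipates.

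The step I expect to be the main obstacle is the forward direction: producing a genuine element of the pyramid $\cP X_n$ (a space truly dominated by $X_n$, i.e.\ the image of an honest $1$-Lipschitz map defined on \emph{all} of $X_n$) rather than a merely approximate isometry, and then transporting it to approximate an arbitrary $X$. The cutoff functions $\phi_i$ together with the coordinatewise McShane extension are exactly what make the domination global and sidestep any delicate treatment of the leftover mass; the remaining work, namely verifying that the pushforward measures converge so that Lemma~\ref{prop:mmg4.12} applies, is routine bookkeeping.
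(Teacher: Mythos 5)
The paper does not actually prove this proposition: it is quoted verbatim from \cite{MMG}*{Proposition 8.5(2)}, and the paper even omits the definition of infinite dissipation. So your argument must stand on its own, and in substance it does; it is essentially the standard proof. The forward direction is sound: the cutoff map $F_n=(\min(d_{X_n}(\cdot,A_i),R))_{i}$ into $(\R^N,\ell^\infty)$ really does produce an element of $\cP X_n$ that is box-close to $\Delta_{N,R}$, the Kuratowski embedding plus coordinatewise McShane extension correctly transports this to an arbitrary finite target while keeping the map $1$-Lipschitz on all of $X_n$, and the density of finite mm-spaces with masses in $\frac1N\Z$ and bounded diameter (hence of $\bigcup_{N,R}\cP\Delta_{N,R}$) is standard, though you only assert it. The mechanism of the reverse direction is also correct: once $3\ep_n<1/N$ every atom lies in the non-exceptional domain of the $3\ep_n$-mm-isomorphism, the Prokhorov condition forces mass $\ge 1/N-o(1)$ near each image point, and pulling back through the $1$-Lipschitz domination map preserves both mass and separation.

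The one loose end is the final sentence of the reverse direction. Infinite dissipation in \cite{MMG}*{Definition 8.1} requires $\mathrm{Sep}(X_n;\kappa_0,\dots,\kappa_N)\to\infty$ for \emph{arbitrary} $\kappa_0,\dots,\kappa_N>0$ with $\sum_i\kappa_i<1$, whereas testing only against the uniform spaces $\Delta_{N,2\Delta}$ yields $N$ sets each of mass roughly $1/N$; this does not directly produce, say, two far-apart sets of masses $0.9$ and $0.05$. You can close the gap either by running your identical argument with $X$ equal to the $(N+2)$-point space with all mutual distances $2\Delta$ and atom masses $\kappa_0,\dots,\kappa_N,1-\sum_i\kappa_i$ (the non-exceptional domain then contains every atom once $3\ep_n<\min_i\kappa_i$), or by a grouping argument: take $M$ large, obtain $M$ sets of mass $\ge 1/M-\ep$ pairwise $\ge\Delta$ apart, and merge them into $N+1$ unions of the required masses, which remain pairwise $\ge\Delta$ apart. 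Either fix is a line or two; with it the proof is complete.
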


An easy discussion using \cite{OS}*{Lemma 6.6} leads to the following.

\begin{prop} \label{prop:dissipate}
The sequence $\{X_n\}_{n=1}^\infty$ infinite dissipates if and only if
\[
\lim_{n\to\infty} \OD(X_n;-\kappa) = \infty
\]
for any $\kappa \in (0,1)$.
\end{prop}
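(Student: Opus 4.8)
The plan is to route the equivalence through the pyramid characterization recorded just above: $\{X_n\}$ infinitely dissipates if and only if $\{\cP X_n\}$ converges weakly to $\cX$. The first fact I would establish is the monotonicity of the observable diameter under the Lipschitz order. If $Y\prec X$, realized by a $1$-Lipschitz $\pi\colon X\to Y$ with $\pi_\# m_X=m_Y$, then $g\mapsto g\circ\pi$ sends $\Lip_1(Y)$ into $\Lip_1(X)$ and preserves push-forwards, so $\{g_\# m_Y:g\in\Lip_1(Y)\}\subset\{h_\# m_X:h\in\Lip_1(X)\}$ and hence $\OD(Y;-\kappa)\le\OD(X;-\kappa)$ for every $\kappa$. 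Consequently, if one sets $\OD(\cP;-\kappa):=\sup_{Y\in\cP}\OD(Y;-\kappa)$, then $\OD(\cP X_n;-\kappa)=\OD(X_n;-\kappa)$. Finally, testing with the intervals $I_L:=([0,L],|\cdot|,\text{uniform})\in\cX$, whose identity map is $1$-Lipschitz and pushes the measure to the uniform measure on $[0,L]$, gives $\OD(I_L;-\kappa)\ge L(1-\kappa)$, so that $\OD(\cX;-\kappa)=\infty$ for every $\kappa\in(0,1)$.

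With this setup, the statement reduces to comparing weak convergence $\cP X_n\to\cX$ with the divergence $\OD(\cP X_n;-\kappa)\to\OD(\cX;-\kappa)=\infty$, and this comparison is exactly what I would extract from \cite{OS}*{Lemma 6.6}. For the forward implication one can also argue by hand: since $\cX$ is the maximal pyramid, $\X\setminus\cX=\emptyset$ and condition (2) of weak convergence is vacuous, so $\cP X_n\to\cX$ forces $\square(I_L,\cP X_n)\to0$; choosing $Y_n\prec X_n$ with $\square(I_L,Y_n)\to0$ and invoking the standard continuity of the observable diameter under the box distance, one obtains $\liminf_n\OD(X_n;-\kappa)\ge\liminf_n\OD(Y_n;-\kappa)\ge L(1-\kappa')$ for $\kappa'$ slightly above $\kappa$, and letting $L\to\infty$ yields $\OD(X_n;-\kappa)\to\infty$ for each $\kappa\in(0,1)$.

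The reverse implication is the heart of the matter and the step I expect to be the main obstacle. Here one must upgrade the purely numerical hypothesis that $\OD(X_n;-\kappa)\to\infty$ for \emph{every} $\kappa\in(0,1)$ into the geometric assertion that the limit pyramid exhausts all of $\cX$. Because finite mm-spaces are $\square$-dense in $\X$ and pyramids are downward closed under $\prec$ and box-closed, it is enough to show that every finite mm-space is box-approximated by spaces $Y_n\prec X_n$, and \cite{OS}*{Lemma 6.6} is precisely the tool that turns divergence of the observable diameter at \emph{all} thresholds into the presence, inside $X_n$, of arbitrarily many mutually distant sets of comparable mass from which such a dominating $1$-Lipschitz map can be assembled. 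The uniformity over $\kappa$ near $1$ is indispensable and is where the subtlety lies: a sequence of two atoms at mutual distance $n$ has $\OD(\cdot;-\kappa)\to\infty$ for $\kappa<\tfrac12$ yet $\OD(\cdot;-\kappa)=0$ for $\kappa\ge\tfrac12$ and manifestly does not dissipate, so the argument must genuinely exploit the thresholds close to $1$, exactly as Lemma 6.6 is designed to. Granting that lemma, the two implications combine with the identity $\OD(\cP X_n;-\kappa)=\OD(X_n;-\kappa)$ to give the desired equivalence.
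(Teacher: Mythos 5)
Your proposal is correct and follows essentially the same route as the paper, which itself gives no argument beyond invoking \cite{OS}*{Lemma 6.6}: both reduce the substantive implication (that $\OD(X_n;-\kappa)\to\infty$ for every $\kappa\in(0,1)$ forces $\cP X_n\to\cX$) to that lemma, combined with the pyramid characterization of infinite dissipation. Your hands-on verification of the forward implication via uniform intervals and monotonicity under the Lipschitz order, and your two-atom example showing that divergence at all thresholds $\kappa\in(0,1)$ is indispensable, are both correct and consistent with the ``easy discussion'' the paper alludes to.
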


\section{Convergence of cones}\label{sec:cone_conv}
We first recall some properties of the cone metric.
For $\kappa \in \R$, we set
\[
I_{\kappa} := \begin{cases}
[0, \pi/\sqrt{\kappa}] & \text{if } \kappa>0, \\
[0,\infty) & \text{if } \kappa \le 0.
\end{cases}
\]
For a metric space $X$, we denote
\[
rx := q(r, x) \in K_\kappa(X), \quad 0 := q(0, x) \in K_\kappa(X)
\]
for $r \in I_\kappa$ and $x \in X$, where $q \colon I_\kappa \times X \to K_\kappa(X)$ is the quotient map.
Moreover, we recall
\[
\sk(r) :=
\begin{cases}
\frac{1}{\sqrt{\kappa}}\sin(\sqrt{\kappa}r) & \text{if } \kappa>0,\\
r & \text{if } \kappa=0,\\
\frac{1}{\sqrt{-\kappa}}\sinh(\sqrt{-\kappa}r) & \text{if } \kappa<0,
\end{cases} \quad
\ck(r) :=
\begin{cases}
\cos(\sqrt{\kappa}r) & \text{if } \kappa>0,\\
1 & \text{if } \kappa=0,\\
\cosh(\sqrt{-\kappa}r) & \text{if } \kappa<0,
\end{cases}
\]
and put
\[
\tsk(r) := \boldsymbol{s}_{\min\{\kappa, 0\}}(r)
\]
for $r \in I_\kappa$.
By the definition of the cone metric \eqref{eq:cone_dist} and the geometry of surface with constant curvature, we obtain the following two lemmas immediately.

\begin{lem}\label{lem:cone_met}
Let $X$ be a metric space and let $\kappa \in \R$.
Then we have
\begin{enumerate}
\item $d_{K_\kappa(X)}(rx, r'x) = |r-r'|$ for any $r,r' \in I_\kappa$ and $x\in X$,
\item $d_{K_\kappa(X)}(rx, rx') \le \tsk(r)\min\{d_X(x,x'),\pi\}$ for any $r \in I_\kappa$ and $x, x' \in X$.
\end{enumerate}
In particular, we have
\begin{equation}
d_{K_\kappa(X)}(rx, r'x') \le |r-r'| + \min\{\tsk(r), \tsk(r')\}\min{\{d_X(x,x'), \pi\}}
\end{equation}
for any $rx, r'x'\in K_\kappa(X)$.
\end{lem}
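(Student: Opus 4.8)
The plan is to handle the three assertions in turn, deriving (1) and (2) directly from the defining relation \eqref{eq:cone_dist} and then obtaining the displayed ``in particular'' estimate from them by the triangle inequality. The only genuine content lies in (2): assertion (1) is an addition-formula identity, and the last inequality is a two-line application of the triangle inequality once (1) and (2) are in hand.

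For (1) I would set $x'=x$ in \eqref{eq:cone_dist}, so that $\cos(\min\{d_X(x,x),\pi\})=1$ and
\[
\ck(d_{K_\kappa(X)}(rx,r'x)) = \ck(r)\ck(r') + \kappa\,\sk(r)\sk(r').
\]
The right-hand side is exactly the subtraction formula for $\ck$: for $\kappa>0$ it is $\cos(\sqrt\kappa r)\cos(\sqrt\kappa r')+\sin(\sqrt\kappa r)\sin(\sqrt\kappa r')=\cos(\sqrt\kappa(r-r'))=\ck(|r-r'|)$, for $\kappa<0$ the analogous $\cosh$ identity again gives $\ck(|r-r'|)$, and for $\kappa=0$ the limit formula gives $d^2=(r-r')^2$. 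Since $\ck$ is strictly monotone, hence injective, on $I_\kappa$, and $|r-r'|\in I_\kappa$, I would conclude $d_{K_\kappa(X)}(rx,r'x)=|r-r'|$.

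For (2) I would set $r'=r$ and write $\theta:=\min\{d_X(x,x'),\pi\}\in[0,\pi]$, so that
\[
\ck(d_{K_\kappa(X)}(rx,rx')) = \ck(r)^2 + \kappa\,\sk(r)^2\cos\theta.
\]
This is precisely the law of cosines in the two-dimensional model space $M_\kappa$ of constant curvature $\kappa$, for a geodesic hinge at the apex $o$ whose two legs have length $r$ and enclose angle $\theta$; by injectivity of $\ck$ the value $d_{K_\kappa(X)}(rx,rx')$ equals the distance in $M_\kappa$ between the two endpoints. Both endpoints lie on the metric circle $\partial B(o,r)$, and in geodesic polar coordinates around $o$ the metric of $M_\kappa$ takes the form $d\rho^2+\sk(\rho)^2\,d\omega^2$, so the sub-arc of $\partial B(o,r)$ joining them has length $\sk(r)\theta$. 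As the distance is bounded by the length of any connecting curve, this yields $d_{K_\kappa(X)}(rx,rx')\le \sk(r)\theta$. I would then invoke the elementary inequality $\sk(r)\le\tsk(r)$ on $I_\kappa$ (equality when $\kappa\le 0$, and $\frac{1}{\sqrt\kappa}\sin(\sqrt\kappa r)\le r$ when $\kappa>0$) to upgrade this to the stated bound $d_{K_\kappa(X)}(rx,rx')\le \tsk(r)\theta$.

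Finally, the displayed estimate follows by routing through an intermediate point: through $rx'$, the triangle inequality together with (2) and (1) gives $d_{K_\kappa(X)}(rx,r'x')\le \tsk(r)\theta + |r-r'|$, while through $r'x$ it gives $|r-r'| + \tsk(r')\theta$, and taking the smaller of the two bounds produces the factor $\min\{\tsk(r),\tsk(r')\}$. I expect the main obstacle to be the rigorous justification of (2): identifying the abstractly defined cone distance with the model-space distance (here the truncation $\min\{\cdot,\pi\}$ is exactly what keeps the enclosed angle in $[0,\pi]$ so that the law of cosines applies), and making the arc-length comparison precise uniformly across the three sign regimes of $\kappa$. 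A purely analytic alternative is to verify $d_{K_\kappa(X)}(rx,rx')\le\sk(r)\theta$ from the closed forms of $\sk,\ck$, but this forces one to split off the case $\sk(r)\theta\ge \diam M_\kappa$, where the bound is automatic since $d_{K_\kappa(X)}(rx,rx')\in I_\kappa$, and then to track the sign of $\kappa$, which governs the direction of monotonicity of $\ck$, in the complementary case.
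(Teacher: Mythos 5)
Your proposal is correct, and it follows exactly the route the paper gestures at: the paper states that Lemma \ref{lem:cone_met} follows ``immediately'' from the defining relation \eqref{eq:cone_dist} and the geometry of surfaces of constant curvature, and gives no written proof. Your argument simply fills in those details --- the addition formula for $\ck$ for (1), the law of cosines and the arc-length comparison in the model space $M_\kappa$ for (2), the bound $\sk\le\tsk$, and the triangle inequality through the two intermediate points $rx'$ and $r'x$ for the final estimate --- all of which check out, including the edge cases for $\kappa>0$.
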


\begin{lem}\label{lem:cone_met_angle}
Let $X$ and $Y$ be two metric spaces and let $\kappa \in \R$, $\ep > 0$, and $R > 0$.
For any $x, x' \in X$ and $y,y' \in Y$ with
\[
|d_X(x,x') - d_Y(y,y')| < \ep
\]
and for any $r, r' \in I_\kappa \cap [0,R]$, we have
\[
|d_{K_\kappa(X)}(rx, r'x') - d_{K_\kappa(Y)}(ry,r'y')| < \tsk(R)\ep.
\]
\end{lem}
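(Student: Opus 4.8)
The plan is to read \eqref{eq:cone_dist} as the law of cosines in the model surface $M_\kappa^2$ of constant curvature $\kappa$ and thereby reduce the statement to a one-variable Lipschitz estimate. I would write $\theta := \min\{d_X(x,x'),\pi\}$ and $\theta' := \min\{d_Y(y,y'),\pi\}$, and set $D := d_{K_\kappa(X)}(rx, r'x')$ and $D' := d_{K_\kappa(Y)}(ry, r'y')$. By \eqref{eq:cone_dist}, $D$ is exactly the distance in $M_\kappa^2$ between two points lying at distances $r$ and $r'$ from a fixed base point $o$ and subtending angle $\theta$ at $o$, and $D'$ is the same quantity for the angle $\theta'$ with the identical radii $r,r'$. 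Denoting by $g_{r,r'}(\phi)$ the law-of-cosines distance in $M_\kappa^2$ for radii $r,r'$ and apex angle $\phi\in[0,\pi]$, this means $D = g_{r,r'}(\theta)$ and $D' = g_{r,r'}(\theta')$. Since $t\mapsto \min\{t,\pi\}$ is $1$-Lipschitz, $|\theta-\theta'|\le |d_X(x,x')-d_Y(y,y')|<\ep$, so it will be enough to prove that $g_{r,r'}$ is $\min\{\sk(r),\sk(r')\}$-Lipschitz and that $\min\{\sk(r),\sk(r')\}\le \tsk(R)$.

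For the Lipschitz estimate I would fix the first point $p\in M_\kappa^2$ at distance $r$ from $o$ and let the second point $p'(\phi)$, at distance $r'$ from $o$, vary with $\phi$. In geodesic polar coordinates $(\rho,\phi)$ centered at $o$ the metric of $M_\kappa^2$ takes the form $d\rho^2+\sk(\rho)^2\,d\phi^2$, so the curve $\phi\mapsto p'(\phi)$ on the circle of radius $r'$ has speed $\sk(r')$. As $q\mapsto d(p,q)$ is $1$-Lipschitz, this gives
\[
|g_{r,r'}(\phi_1)-g_{r,r'}(\phi_2)| \le d(p'(\phi_1),p'(\phi_2)) \le \sk(r')\,|\phi_1-\phi_2|,
\]
and exchanging the roles of $p$ and $p'$ yields the same bound with $\sk(r)$, so $g_{r,r'}$ is $\min\{\sk(r),\sk(r')\}$-Lipschitz. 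The degenerate configurations (where $r'=0$, or $r'=\pi/\sqrt\kappa$ when $\kappa>0$) cause no trouble, since there $p'$ is stationary and $\sk(r')=0$.

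Finally I would bound $\min\{\sk(r),\sk(r')\}\le\tsk(R)$ for $r,r'\in I_\kappa\cap[0,R]$ by cases: if $\kappa\ge0$ then $\tsk(R)=R$ and $\sk(r)\le r\le R$ (using $\sin t\le t$ when $\kappa>0$), while if $\kappa<0$ then $\tsk=\sk$ is nondecreasing and $\sk(r)\le\sk(R)=\tsk(R)$. Combining the three steps,
\[
|D-D'| = |g_{r,r'}(\theta)-g_{r,r'}(\theta')| \le \tsk(R)\,|\theta-\theta'| < \tsk(R)\,\ep,
\]
the last inequality being strict because $\tsk(R)>0$ for $R>0$. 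The step I expect to require the most care is the Lipschitz bound for $g_{r,r'}$: its correctness rests entirely on reading the arc-length speed $\sk(r')$ off the polar form of the model metric together with the $1$-Lipschitz property of the distance function, so the real work is in making the model-space reformulation of \eqref{eq:cone_dist} precise enough that this first-variation argument is rigorous rather than merely a picture.
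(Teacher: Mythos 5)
Your proof is correct, and it is precisely the argument the paper has in mind: the paper dispenses with both cone-metric lemmas by remarking that they follow ``by the definition of the cone metric and the geometry of surface with constant curvature,'' and your reduction to the law of cosines in $M_\kappa^2$, the speed $\sk(r')$ of the coordinate circle in geodesic polar coordinates, and the $1$-Lipschitz property of the distance function is exactly the omitted verification. No gaps; the degenerate cases and the comparison $\min\{\sk(r),\sk(r')\}\le\tsk(R)$ are handled correctly.
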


As the first step to prove Theorem \ref{thm:cone_py}, we prove the following.

\begin{thm}\label{thm:cone_box}
Let $\kappa \in \R$.
Assume that a sequence $\{X_n\}_{n=1}^\infty$ of mm-spaces box-converges to an mm-space $X$ and that a sequence $\{\mu_n\}_{n=1}^\infty$ of Borel probability measures on $I_\kappa$ converges weakly to a Borel probability measure $\mu$.
Then $\{K_{\kappa, \mu_n}(X_n)\}_{n=1}^\infty$ box-converges to $K_{\kappa,\mu}(X)$.
\end{thm}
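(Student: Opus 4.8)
The plan is to produce, for each $n$, an $\ep_n'$-mm-isomorphism $F_n\colon K_{\kappa,\mu_n}(X_n)\to K_{\kappa,\mu}(X)$ with $\ep_n'\to 0$, and then conclude box-convergence by Lemma \ref{lem:ep-mm-isom}(1). Box-convergence $X_n\to X$ gives, via Lemma \ref{lem:ep-mm-isom}(2), $\ep_n$-mm-isomorphisms $f_n\colon X_n\to X$ with $\ep_n\to 0$ and non-exceptional domains $\tX_n$. I would simply lift these to the cones by acting on the base and fixing the radial coordinate, namely $F_n(rx):=r\,f_n(x)$ and $F_n(0):=0$. The three defining conditions of an $\ep_n'$-mm-isomorphism then have to be verified on a suitably truncated cone.

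The truncation is the device that tames the non-compactness of $I_\kappa$ when $\kappa\le 0$. Since $\{\mu_n\}$ converges weakly, the family $\{\mu_n\}\cup\{\mu\}$ is tight, so for any $\delta>0$ I can fix $R=R(\delta)$ with $\mu_n([0,R])\ge 1-\delta$ for all $n$ and $\mu([0,R])\ge 1-\delta$. I would take the non-exceptional domain of $F_n$ to be $q([0,R]\times\tX_n)$, whose measure under $q_\#(\mu_n\otimes m_{X_n})$ is at least $(1-\delta)(1-\ep_n)$, yielding condition~(1). For condition~(2), if $rx,r'x'$ lie in this set then $|d_{X_n}(x,x')-d_X(f_n(x),f_n(x'))|\le\ep_n$, so Lemma \ref{lem:cone_met_angle} bounds the distortion of the cone distance by $\tsk(R)\ep_n$. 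Here the factor $\tsk(R)$ is a constant for fixed $R$, but blows up as $R\to\infty$ when $\kappa<0$; this is precisely why the parameters $\delta$ (hence $R$) and $n$ must be sent to their limits in the right order.

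For condition~(3) I would compute $(F_n)_\# q_\#(\mu_n\otimes m_{X_n})=q_\#\bigl(\mu_n\otimes(f_n)_\# m_{X_n}\bigr)$. By Lemma \ref{lem:cone_met} the quotient map $q$ is $1$-Lipschitz from $[0,R]\times X$, equipped with the metric $|r-r'|+\tsk(R)\min\{d_X(x,x'),\pi\}$, to $K_\kappa(X)$; hence by \eqref{eq:lip_prok} it suffices to estimate the Prokhorov distance on this product. Combining $\prok(\mu_n,\mu)\to 0$, the bound $\prok((f_n)_\# m_{X_n},m_X)\le\ep_n$ coming from condition~(3) for $f_n$, and the product estimate for the Prokhorov distance (the $X$-factor scaled by $\tsk(R)$), I expect a bound of order $\prok(\mu_n,\mu)+\tsk(R)\ep_n+\delta$, the last term accounting for the mass outside $[0,R]$. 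One may instead phrase the measure-change part through Lemma \ref{prop:mmg4.12}, but the truncated-product route keeps all error terms in the same scale.

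Collecting the three estimates, the mm-isomorphism error is $\ep_n'\lesssim\delta+\tsk(R)\ep_n+\prok(\mu_n,\mu)$. Fixing $\delta$ and letting $n\to\infty$ kills the last two terms, so Lemma \ref{lem:ep-mm-isom}(1) gives $\limsup_{n}\square(K_{\kappa,\mu_n}(X_n),K_{\kappa,\mu}(X))\lesssim\delta$; letting $\delta\to 0$ finishes the proof. I expect the main obstacle to be exactly this non-compactness: the distortion factor $\tsk(R)$ forces the iterated limit, and arranging the uniform tightness to interact cleanly with the $\ep_n$-mm-isomorphism data (so that every error term is simultaneously controllable for fixed $R$) is the delicate bookkeeping. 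When $\kappa>0$ the interval $I_\kappa$ is compact, so one may take $R=\pi/\sqrt{\kappa}$ and $\delta=0$, and the difficulty disappears; the genuine work is confined to $\kappa\le 0$.
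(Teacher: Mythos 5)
Your proposal is correct and follows essentially the same route as the paper: both lift the $\ep_n$-mm-isomorphisms $f_n$ radially to $g_n(rx)=rf_n(x)$, truncate at a radius $R$ supplied by tightness of $\{\mu_n\}$, control the metric distortion via Lemma \ref{lem:cone_met_angle} with the factor $\tsk(R)$, and handle the Prokhorov estimate by pushing a product-measure bound through the $1$-Lipschitz quotient map, with the same order of limits ($n\to\infty$ first, then the truncation parameter). The paper makes the product-Prokhorov step precise by citing Lemma 4.5 of \cite{prod} together with the inclusion $U_\ep(q^{-1}(B))\subset q^{-1}(U_\ep(B))$, which is exactly the estimate you sketch.
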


Both the assumption and conclusion of this theorem are stronger than those of Theorem \ref{thm:cone_py}.

\begin{proof}[Proof of Theorem \ref{thm:cone_box}]
Take any small number $\ep > 0$.
Since $\{\mu_n\}_{n=1}^\infty$ converges weakly to $\mu$, there exists $R>0$ such that
\[
\sup_{n\in\N} \mu_n((R, \infty)) < \ep \quad \text{and} \quad \mu((R,\infty)) < \ep.
\]
Note that $R = \pi/\sqrt{\kappa}$ can be assumed if $\kappa>0$.
Since $\{X_n\}_{n=1}^\infty$ box-converges to $X$, there exists an $\ep_n$-mm-isomorphism $f_n \colon X_n \to X$ for each $n$ with $\ep_n \to 0$ by Lemma \ref{lem:ep-mm-isom}.
We define a map $g_n \colon K_{\kappa,\mu_n}(X_n) \to K_{\kappa,\mu}(X)$ by
\[
g_n(rx) := rf_n(x), \quad rx \in K_{\kappa,\mu_n}(X_n).
\]
Then we prove that the map $g_n$ is a $3\ep$-mm-isomorphism from $K_{\kappa,\mu_n}(X_n)$ to $K_{\kappa,\mu}(X)$ for any sufficiently large $n$, which implies that $\square(K_{\kappa,\mu_n}(X_n), K_{\kappa,\mu}(X)) < 9\ep$ by Lemma \ref{lem:ep-mm-isom} again.

Let $\tX_n$ be a non-exceptional domain of $f_n$ and put
\[
\tK_n := q([0,R] \times \tX_n) \subset K_\kappa(X_n) \quad \text{and} \quad \tK := q([0,R] \times X) \subset K_\kappa(X).
\]
Then we have
\[
m_{K_{\kappa,\mu_n}(X_n)}(\tK_n) = \mu_n\otimes m_{X_n}([0,R]\times \tX_n) > (1-\ep)(1-\ep_n) > 1-(\ep+\ep_n).
\]
We consider the metric
\[
d((r,x),(r',x')) := |r-r'| + \tsk(R) \min\{d_{X}(x,x'), \pi\}
\]
on the product space $I_\kappa \times X$.
By \cite{prod}*{Lemma 4.5}, we have
\[
\mu_n \otimes (f_n)_\# m_{X_n}(A) \le \mu \otimes m_X(U_\ep(A)) + \ep
\]
for any Borel subset $A$ on $I_\kappa\times X$
if
\[
2\prok(\mu_n, \mu)+2\tsk(R)  \,\prok((f_n)_\# m_{X_n}, m_X) < \ep.
\]
Moreover we have
\[
U_\ep(q^{-1}(B)) \subset q^{-1}(U_\ep(B))
\]
for any $B \subset \tK$. Indeed, for any $(r, x) \in U_\ep(q^{-1}(B))$, there exists $(r', x') \in q^{-1}(B)$ such that
\[
d((r,x),(r',x')) = |r-r'| + \tsk(R) \min\{d_{X}(x,x'), \pi\} < \ep.
\]
By Lemma \ref{lem:cone_met}, we have $d_{K_\kappa(X)}(rx, r'x') < \ep$.
Therefore, for any Borel subset $B \subset K_{\kappa,\mu}(X)$, we have
\begin{align*}
(g_n)_\# m_{K_{\kappa,\mu_n}(X_n)}(B)
&\le (g_n)_\# m_{K_{\kappa,\mu_n}(X_n)}(B \cap \tK) +\ep +\ep_n \\
&=q_\# (\mu_n\otimes (f_n)_\# m_{X_n})(B \cap \tK) +\ep+\ep_n \\
&\le (\mu\otimes m_X)(U_\ep(q^{-1}(B \cap \tK))) +2\ep+\ep_n \\
&\le (\mu\otimes m_X)(q^{-1}(U_\ep(B \cap \tK))) +2\ep+\ep_n \\
&\le m_{K_{\kappa,\mu}(X)}(U_\ep(B \cap \tK)) +3\ep,
\end{align*}
which implies that $\prok((g_n)_\# m_{K_{\kappa,\mu_n}(X_n)}, m_{K_{\kappa,\mu}(X)}) \le 3\ep$ for any sufficiently large $n$.

For any $rx, r'x' \in \tK_n$,
since
\[
|d_X(f_n(x), f_n(x')) - d_{X_n}(x, x')| \le \ep_n,
\]
Lemma \ref{lem:cone_met_angle} implies that
\[
|d_{K_\kappa(X)}(rf_n(x), r'f_n(x')) - d_{K_\kappa(X_n)}(rx, r'x')| \le \tsk(R)\ep_n <3\ep
\]
for any sufficiently large $n$.
Therefore $g_n$ is a $3\ep$-mm-isomorphism for any sufficiently large $n$ and then $\{K_{\kappa,\mu_n}(X_n)\}_{n=1}^\infty$ box-converges to $K_{\kappa,\mu}(X)$.
\end{proof}

We now define the $\kappa$-cone for a pyramid and prove Theorem \ref{thm:cone_py}.

\begin{dfn}[$\kappa$-Cone for a pyramid]\label{dfn:cone_py}
We define the $\kappa$-cone $K_{\kappa, \mu}(\cP)$ of a pyramid $\cP$ by
\[
K_{\kappa,\mu}(\cP) := \overline{\bigcup_{X \in \cP} \cP{K_{\kappa,\mu}(X)}}^{\, \square}.
\]
\end{dfn}
The $\kappa$-cone $K_{\kappa,\mu}(\cP)$ is a pyramid by Proposition \ref{prop:directed}
since the operation $X \mapsto K_{\kappa,\mu}(X)$ preserves the Lipschitz order.

In order to prove Theorem \ref{thm:cone_py}, we prove the following (1) and (2) under the assumptions.
\begin{enumerate}
\item For any $Y \in K_{\kappa,\mu}(\cP)$, we have $\lim_{n\to\infty} \square(Y, K_{\kappa,\mu_n}(\cP_n)) = 0$.
\item If an mm-space $Y$ satisfies $\liminf_{n\to\infty} \square(Y, K_{\kappa,\mu_n}(\cP_n)) = 0$, then $Y \in K_{\kappa,\mu}(\cP)$.
\end{enumerate}

The proof of (1) of Theorem \ref{thm:cone_py} is an easy work using Theorem \ref{thm:cone_box}.
\begin{proof}[Proof of (1) of Theorem \ref{thm:cone_py}]
Take any $Y \in K_{\kappa,\mu}(\cP)$ and any $\ep>0$.
Then there exist $X \in \cP$ and $Y' \prec K_{\kappa,\mu}(X)$ with $\square(Y, Y') < \ep$.
Since $\cP_n$ converges weakly to $\cP$, there exists a sequence $\{X_n\}_{n=1}^\infty$ with $X_n \in \cP_n$ box-converging to $X$.
By Theorem \ref{thm:cone_box}, the sequence $\{K_{\kappa,\mu_n}(X_n)\}_{n=1}^\infty$ box-converges to $K_{\kappa,\mu}(X)$.
By \cite{MMG}*{Lemma 6.10(1)}, there exists a sequence $\{Y_n\}_{n=1}^\infty$ with $Y_n \prec K_{\kappa,\mu_n}(X_n) \in K_{\kappa,\mu_n}(\cP_n)$ box-converging to $Y'$.
Therefore we obtain
\[
\limsup_{n\to \infty}\square(Y, K_{\kappa,\mu_n}(\cP_n)) < \lim_{n\to \infty}\square(Y', Y_n) + \ep = \ep.
\]
As $\ep \to 0$, we finish the proof.
\end{proof}

The proof of (2) of Theorem \ref{thm:cone_py} is harder than (1).
We mimic the proof of \cite{comts}*{Theorem 1.2}.
We prepare a term and a lemma.

\begin{dfn}[$1$-Lipschitz up to an additive error]
Let $X$ be an mm-space and $Y$ be a metric space. A map $f \colon X \to Y$ is said to be 1-{\it Lipschitz up to} ({\it an additive error}) {\it $\ep \ge 0$} if there exists a Borel subset $\tX \subset X$ such that
\begin{enumerate}
\item $m_X(\tX) \ge 1 - \ep$,
\item $d_Y(f(x), f(x')) \le d_X(x, x') + \ep$ for any $x, x' \in \tX$.
\end{enumerate}
We call $\tX$ a {\it non-exceptional domain} of $f$.
\end{dfn}

\begin{lem}[\cite{comts}*{Corollary 4.7}]\label{lem:error_in_py}
Let $\cP$ be a pyramid and $Y$ an mm-space. Assume that, for any $\ep > 0$, there exist an mm-space $X_\ep \in \cP$ and a Borel measurable map $f_\ep \colon X_\ep \to Y$ such that $f_\ep$ is $1$-Lipschitz up to $\ep$ and $\prok((f_\ep)_\# m_{X_\ep}, m_Y) \le \ep$ holds. Then $Y \in \cP$.
\end{lem}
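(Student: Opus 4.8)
The plan is to use that a pyramid is box-closed (Definition~\ref{dfn:py}): if I can produce, for every $\ep>0$, an mm-space $Z_\ep\in\cP$ with $\square(Z_\ep,Y)=O(\ep)$, then letting $\ep\to0$ forces $Y\in\cP$. The hypothesis furnishes, for each $\ep$, an mm-space $X_\ep\in\cP$ and a Borel map $f_\ep\colon X_\ep\to Y$ that is $1$-Lipschitz up to $\ep$ (with non-exceptional domain $\tX_\ep$, $m_{X_\ep}(\tX_\ep)\ge1-\ep$) and satisfies $\prok(\nu_\ep,m_Y)\le\ep$, where $\nu_\ep:=(f_\ep)_\#m_{X_\ep}$. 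This is precisely an \emph{approximate domination} of $Y$ by $X_\ep$, and the whole task is to upgrade it to a genuine domination by a space that is still box-close to $Y$.

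First I would pull the metric of $Y$ back along $f_\ep$, setting $D_\ep(x,x'):=d_Y(f_\ep(x),f_\ep(x'))$. This is a pseudometric on $X_\ep$ under which $f_\ep$ is a measure-preserving isometry onto its image in $(Y,d_Y,\nu_\ep)$. To turn $f_\ep$ into a genuine Lipschitz domination I would correct $D_\ep$ to lie below $d_{X_\ep}$: let $\bar D_\ep$ be the largest pseudometric bounded above by $\min\{D_\ep,d_{X_\ep}\}$, i.e.\ the associated chain (length) pseudometric, and let $Z_\ep$ be the metric-measure quotient of $(X_\ep,\bar D_\ep,m_{X_\ep})$. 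Since $\bar D_\ep\le d_{X_\ep}$, the quotient map $X_\ep\to Z_\ep$ is $1$-Lipschitz and measure-preserving, so $Z_\ep\prec X_\ep$ and hence $Z_\ep\in\cP$ by the downward closure of Definition~\ref{dfn:py}(1).

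It then remains to show $\square(Z_\ep,Y)=O(\ep)$. On the good domain $\tX_\ep$ one has $D_\ep(x,x')\le d_{X_\ep}(x,x')+\ep$, so $\min\{D_\ep,d_{X_\ep}\}$ agrees with $D_\ep$ up to an additive $\ep$ there. Granting that $\bar D_\ep$ agrees with $D_\ep$ up to $O(\ep)$ on a set of nearly full $m_{X_\ep}\otimes m_{X_\ep}$-measure, the map $f_\ep$ becomes an $O(\ep)$-mm-isomorphism from $Z_\ep$ onto $(Y,d_Y,\nu_\ep)$: it preserves measure exactly and distorts distances by $O(\ep)$ on a large-measure set. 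Lemma~\ref{lem:ep-mm-isom} then gives $\square(Z_\ep,(Y,d_Y,\nu_\ep))=O(\ep)$, while Lemma~\ref{prop:mmg4.12} gives $\square((Y,d_Y,\nu_\ep),Y)\le2\prok(\nu_\ep,m_Y)\le2\ep$. The triangle inequality yields $\square(Z_\ep,Y)=O(\ep)$, and the reduction completes the proof.

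The main obstacle is exactly the claim granted above: controlling the collapse of the length pseudometric, that is, ruling out for \emph{most} pairs $x,x'$ a chain $x=x_0,\dots,x_m=x'$ whose $\min\{D_\ep,d_{X_\ep}\}$-length is far below $D_\ep(x,x')$. A chain can only shorten the pullback distance at steps where $D_\ep>d_{X_\ep}$; among such steps the good ones cost at most $\ep$, but bad steps (confined to the $\ep$-small exceptional set) are a priori uncontrolled, and a pointwise bound fails because the number of steps is unbounded. I expect the genuine input to be measure-theoretic rather than pointwise: one must argue that, up to discarding a pair-set of measure $O(\ep)$, the $\bar D_\ep$-optimal chains between good points can be forced to avoid the small exceptional set, so that each retained step satisfies $\min\{D_\ep,d_{X_\ep}\}\ge D_\ep-\ep$ and the triangle inequality for $D_\ep$ yields $\bar D_\ep\ge D_\ep-O(\ep)$ on that large-measure set. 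Once this non-collapse estimate is in place, the remaining verifications are routine.
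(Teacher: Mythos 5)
There is a genuine gap, and it sits exactly at the step you flagged and then ``granted'': the non-collapse of the chain pseudometric $\bar D_\ep$ is simply false under the hypotheses, and no discarding of a pair-set of measure $O(\ep)$ can rescue it, because the collapse can be total. Concretely, take $X_\ep = Y = ([0,1], |\cdot|, \mathcal{L}^1)$, let $B \subset [0,1]$ be an open dense set of measure at most $\ep$ (a union of tiny intervals around the rationals), and define $f_\ep = \id$ on $[0,1]\setminus B$ and $f_\ep \equiv \tfrac12$ on $B$. Then $f_\ep$ is $1$-Lipschitz up to $\ep$ (indeed up to $0$, with non-exceptional domain $[0,1]\setminus B$) and $\prok((f_\ep)_\#\mathcal{L}^1, \mathcal{L}^1) \le \ep$, so the hypotheses of the lemma are met. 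But for any $x, x'$ and any $\eta>0$ there is a chain $x, u, u', x'$ with $u, u' \in B$, $|x-u|<\eta$, $|x'-u'|<\eta$, whose middle step costs $\min\{D_\ep(u,u'), |u-u'|\} \le D_\ep(u,u') = |f_\ep(u)-f_\ep(u')| = 0$. Hence $\bar D_\ep(x,x') \le 2\eta$ for all $\eta$, i.e.\ $\bar D_\ep \equiv 0$, and your quotient $Z_\ep$ is a one-point space, nowhere near $Y$ in box distance. The exceptional set, though of small \emph{measure}, can be metrically dense and mapped to a single point, allowing zero-cost teleportation for \emph{every} pair; so the measure-theoretic repair you hoped for (forcing optimal chains between good points to avoid the bad set) is unavailable, and the metric-quotient construction cannot be the engine of the proof.

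The actual proof (Corollary 4.7 of the cited paper, whose mechanism is mirrored in this paper's proof of part (2) of Theorem \ref{thm:cone_py}) routes the approximation through maps that are \emph{genuinely} $1$-Lipschitz from $X_\ep$, which is robust against the bad set. One discretizes $Y$ into finitely many small balls $B_1, \ldots, B_N$ of positive measure with mutual separation $\delta' > 0$, pulls them back along $f_\ep$ restricted to the non-exceptional domain to get sets $A_j := f_\ep^{-1}(B_j) \cap \tX_\ep \subset X_\ep$, which are $(\delta'-\ep)$-separated because the Lipschitz-up-to-$\ep$ estimate holds on $\tX_\ep$, and then maps $X_\ep$ into $(\R^N, \|\cdot\|_\infty)$ by truncated distance functions $x \mapsto (\min\{d_{X_\ep}(x, A_j), c\})_{j}$. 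This map is exactly $1$-Lipschitz on all of $X_\ep$ (distance functions do not care about the exceptional set), so its pushforward image is honestly dominated by $X_\ep$ and lies in $\cP$; the separation of the $A_j$ and the Prokhorov bound then show this image is box-close to the finite discretization $\dot Y$ of $Y$, and box-closedness of $\cP$ concludes. The structural lesson relative to your attempt: rather than repairing the defective map by the largest-minorizing-pseudometric quotient (which small dense exceptional sets destroy), one builds a fresh $1$-Lipschitz witness out of distance functions to well-separated pullback sets, sacrificing the metric fidelity of $f_\ep$ everywhere except on the finitely many sets where it is actually needed.
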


Moreover, we define a norm $\|\cdot\|_{\infty}$ on $\R^n$ by
\[
\|x\|_\infty := \max_{i} |x_i|, \quad x=(x_1,\ldots,x_n) \in \R^n.
\]

\begin{proof}[Proof of (2) of Theorem \ref{thm:cone_py}]
We first prove the case $\kappa = 0$.
We denote $K_{\mu} = K_{0,\mu}$ in this proof.
Choosing a subsequence, we can assume that
\[
\lim_{n\to\infty} \square(Y, K_{\mu_n}(\cP_n)) = 0.
\]
Then there exist $X_n \in \cP_n$ and $Y_n \prec K_{\mu_n}(X_n)$ for every $n$ such that $\{Y_n\}_{n=1}^\infty$ box-converges to $Y$.
Since $Y_n \prec K_{\mu_n}(X_n)$, there exists a $1$-Lipschitz map $f_n \colon K_{\mu_n}(X_n) \to Y_n$ with ${f_n}_\# m_{K_{\mu_n}(X_n)} = m_{Y_n}$.
Since $\{Y_n\}_{n=1}^\infty$ box-converges to $Y$, there exists an $\ep_n$-mm-isomorphism $g_n\colon Y_n \to Y$ with $\ep_n\to0$. Let $\tY_n$ be a non-exceptional domain of $g_n$.
Here, if $\mu(\{0\}) > 0$, then we can assume that $f_n(0) \in \tY_n$ for any sufficiently large $n$.
Indeed, in this case, we see that $f_n(0) \in U_{\ep}(\tY_n)$ for any fixed $\ep>0$ and any sufficiently large $n$.
By \cite{MMG}*{Lemma 3.5}, there exists a Borel measurable map $\pi_n\colon Y_n \to \tY_n$ such that $\pi_n|_{\tY_n} = \id_{\tY_n}$ and
\[
d_{Y_n}(\pi_n(y), y) \le d_{Y_n}(y, \tY_n) + \ep_n
\]
for any $y \in Y_n$.
Then we see that the map $g_n \circ\pi_n$ is a $(4\ep_n+2\ep)$-mm-isomorphism with non-exceptional domain $U_{\ep}(\tY_n)$.
Thus we can replace $g_n$ with $g_n \circ \pi_n$.

Take any $0<\ep<1$. We find finitely many open subsets $B_1, \ldots, B_N$ in $Y$ such that
\begin{align*}
&\max_{j} \diam{B_j}<\ep, \quad \min_{j} m_Y(B_j)>0, \quad \sum_{j=1}^N m_Y(B_j) > 1-\ep, \\
&\text{and} \quad \delta':= \min_{j\neq j'} d_Y(B_j, B_{j'}) > 0
\end{align*}
(cf.~\cite{OY}*{Lemma 42}).
In the case of $\mu(\{0\}) > 0$, we assume that $g_n \circ f_n(0) \in \bigsqcup_{j=1}^N B_j$. Let
\[
B_0 := Y \setminus \bigsqcup_{j=1}^N B_j
\]
and take a point $y_j \in B_j$ for each $j=0, 1, \ldots, N$.
If $B_0$ is empty, we ignore the case $j=0$.
We define a finite mm-space $\dot{Y}$ as
\[
\dot{Y} := (\{y_0, y_1, \ldots, y_N\}, d_Y, \sum_{j=0}^N \mu_Y(B_j)\delta_{y_j}).
\]
Note that the inclusion map $\iota\colon \dot{Y} \to Y$ is an $\ep$-mm-isomorphism.

Since $\{\mu_n\}_{n=1}^\infty$ converges weakly to $\mu$, there exists $R > 1$ such that $\mu(\{R\})=0$ and
\[
\mu((R, \infty)) \le \sup_{n \in \N} \mu_n((R, \infty)) < \frac{\ep}{N}.
\]
Take any sufficiently small number $\eta > 0$ as $2\eta < \min\{\delta', \ep\}$.
We find finitely many disjoint intervals $I_1, \ldots, I_M \subset [0,R]$ such that
\begin{align*}
&\max_{i} \diam{I_i}<\eta, \quad \min_{i} \mu(I_i)>0, \quad \sum_{i=1}^M \mu(I_i) = \mu([0,R]), \quad \text{and} \quad \max_{i}\mu(\partial I_i) = 0.
\end{align*}
Here, $\max_{i}\mu(\partial I_i) = 0$ follows from the fact that the set of atomic points of $\mu$ is countable.
Note that $\mu_n(I_i) \to \mu(I_i)$ as $n\to\infty$ since $\mu(\partial I_i) = 0$.
Moreover, we assume that $0 \not\in \bigsqcup_{i=1}^M I_i$ if $\mu(\{0\}) = 0$.
If $\mu(\{0\}) > 0$, then there exist unique numbers $i \in\{1,\ldots,M\}$ and $j \in\{1,\ldots,N\}$ with $0 \in I_i$ and $f_n(0) \in g_n^{-1}(B_j)\cap\tY_n$.
We denote them by $i_0$ and $j_0$.

We define subsets $A_{ij}^n$ of $X_n$ for $i=1,\ldots,M$ and $j=1,\ldots,N$ by
\[
A_{ij}^n := \left\{x \in X_n \midd \text{there exists } s\in I_i \text{ such that } sx \in f_n^{-1}(g_n^{-1}(B_j) \cap \tY_n)\right\}.
\]
Note that $A_{i_0j_0}^n = X_n$ if $\mu(\{0\}) > 0$ and $n$ is large.
We define a 1-Lipschitz map $\Phi_n \colon X_n \to (\R^{MN}, \|\cdot\|_\infty)$ by
\[
\Phi_n(x) := (\min\{d_{X_n}(x, A_{ij}^n), \pi\})_{i=1,\ldots,M, \, j=1,\ldots,N}.
\]
Then, since $\supp{(\Phi_n)_\# m_{X_n}}$ is in the compact set $\left\{z\in \R^{MN} \midd \|z\|_\infty \le \pi\right\}$, the sequence $\{(\Phi_n)_\# m_{X_n}\}_{n=1}^\infty$ is tight.
Thus there exists a subsequence of $\{(\Phi_n)_\# m_{X_n}\}_{n=1}^\infty$ converging weakly to a Borel probability measure $\nu$ on $\left\{z\in \R^{MN} \midd \|z\|_\infty \le \pi\right\}$.
Defining an mm-space $X$ by
\[
X := (\supp{\nu}, \|\cdot\|_{\infty}, \nu),
\]
we have $X \in \cP$ since $(\supp{(\Phi_n)_\# m_{X_n}}, \|\cdot\|_{\infty}, (\Phi_n)_\# m_{X_n}) \in \cP_n$ and $\{\cP_n\}_{n=1}^\infty$ converges weakly to $\cP$.

Now our goal is to find a Borel measurable map $\Psi \colon K_\mu(X) \to \dot{Y}$ such that
$\Psi$ is 1-Lipschitz up to $3\ep$ and $\prok(\Psi_\# m_{K_\mu(X)}, m_{\dot{Y}}) \le 2\ep$.
If we prove this, then we see that the map $\iota \circ \Psi \colon K_\mu(X) \to Y$ is 1-Lipschitz up to $4\ep$ and satisfies $\prok((\iota\circ\Psi)_\# m_{K_\mu(X)}, m_Y) \le 3\ep$,
which leads to $Y \in K_\mu(\cP)$ by Lemma \ref{lem:error_in_py}.

For any $i \in \{1,\ldots,M\}$, we define a map $\proj_i \colon \R^{MN}\to\R^N$ by
\[
\proj_i((z_{ij})_{i=1,\ldots,M,\,j=1,\ldots,N}) := (z_{i1}, \ldots, z_{iN}), \quad (z_{ij})_{i=1,\ldots,M,\,j=1,\ldots,N} \in \R^{MN}
\]
and put $\nu_i := (\proj_i)_\# \nu$ and $\Phi_{n,i} := \proj_i \circ \Phi_n$.
Note that $(\Phi_{n,i})_\# m_{X_n}$ converges weakly to $\nu_i$ as $n\to\infty$.
Moreover, we define a disjoint family $\{Z_j\}_{j=1}^N$ of subsets of $\R^N$ as
\[
Z_j := \left\{(z_1, \ldots, z_N) \midd z_j=0 \text{ and } \min_{j'\neq j}z_{j'} \ge \delta \right\}
\]
for a fixed positive real number $\delta > 0$ with
\[
\delta < \min\{\frac{\delta'}{3R}, \pi\}.
\]

\begin{clm}\label{clm:Phi_Z}
For every $i=1,\ldots,M$ and every $j=1,\ldots,N$, we have
\[
\Phi_{n,i}(A_{ij}^n) \subset Z_j
\]
if $n$ is sufficiently large.
\end{clm}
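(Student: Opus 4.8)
The plan is to check the two defining conditions of $Z_j$ directly for a point $x \in A_{ij}^n$. The vanishing of the $j$-th coordinate of $\Phi_{n,i}(x)$ is immediate, since $x \in A_{ij}^n$ forces $d_{X_n}(x, A_{ij}^n) = 0$. Hence the real content is the separation estimate $d_{X_n}(x, A_{ij'}^n) \ge \delta$ for every $j' \neq j$; because $\delta < \pi$ this makes each remaining coordinate at least $\delta$ (the case $A_{ij'}^n = \emptyset$ being trivial, as the corresponding entry then equals $\pi$).

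I would prove the separation estimate by contradiction, assuming some $x' \in A_{ij'}^n$ satisfies $d_{X_n}(x, x') < \delta$. Unwinding the definitions of $A_{ij}^n$ and $A_{ij'}^n$ produces heights $s, s' \in I_i$ with $sx \in f_n^{-1}(g_n^{-1}(B_j) \cap \tY_n)$ and $s'x' \in f_n^{-1}(g_n^{-1}(B_{j'}) \cap \tY_n)$; in particular $f_n(sx), f_n(s'x') \in \tY_n$ while $g_n(f_n(sx)) \in B_j$ and $g_n(f_n(s'x')) \in B_{j'}$. Since the blocks $B_j$ are pairwise $\delta'$-separated, this yields the lower bound
\[
d_Y\big(g_n(f_n(sx)), g_n(f_n(s'x'))\big) \ge d_Y(B_j, B_{j'}) \ge \delta'.
\]

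The heart of the argument is to contradict this with an upper bound built from three successive comparisons. First, since $f_n(sx)$ and $f_n(s'x')$ lie in the non-exceptional domain $\tY_n$ of the $\ep_n$-mm-isomorphism $g_n$, passing to $Y_n$ costs at most $\ep_n$. Second, the $1$-Lipschitz property of $f_n$ bounds the $Y_n$-distance by the cone distance $d_{K_0(X_n)}(sx, s'x')$. Third, Lemma \ref{lem:cone_met}, combined with $s, s' \in I_i \subset [0, R]$, $\diam I_i < \eta$, and $d_{X_n}(x, x') < \delta$, bounds the cone distance by $\eta + \tsk(R)\delta$. Chaining these gives
\[
\delta' \le \eta + \tsk(R)\delta + \ep_n.
\]
Now $2\eta < \delta'$ and $\delta < \delta'/(3R) = \delta'/(3\tsk(R))$ (recall $\tsk(R) = R$ for $\kappa = 0$) force the right-hand side below $\tfrac{5}{6}\delta' + \ep_n$, which is impossible once $\ep_n < \tfrac{1}{6}\delta'$, i.e.\ for all sufficiently large $n$. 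This contradiction proves the separation estimate and hence the claim. I expect the main obstacle to be the bookkeeping in this chain: ensuring the comparison points $f_n(sx), f_n(s'x')$ stay inside $\tY_n$ so that the near-isometry of $g_n$ is applicable, and calibrating the threshold $\delta$ against the geometric gap $\delta'$ through the cone factor $\tsk(R)$.
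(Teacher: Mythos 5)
Your proposal is correct and follows essentially the same route as the paper: the $j$-th coordinate vanishes trivially, and the separation $d_{X_n}(x,x')\ge\delta$ for $x'\in A_{ij'}^n$ comes from chaining $\delta'\le d_Y(B_j,B_{j'})\le d_Y(g_n f_n(rx),g_n f_n(r'x'))\le d_{K_0(X_n)}(rx,r'x')+\ep_n\le \eta+R\,d_{X_n}(x,x')+\ep_n$ and using $2\eta<\delta'$, $R\delta<\delta'/3$, $\ep_n\to0$. The only cosmetic difference is that you phrase the separation step as a contradiction while the paper derives the bound directly; the arithmetic and the use of the non-exceptional domain $\tY_n$ are identical.
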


\begin{proof}
Take any $x \in A_{ij}^n$ and let $(z_1, \ldots, z_N) = \Phi_{n,i}(x)$.
It is obvious that
\[
z_j = \min\{d_{X_n}(x, A_{ij}^n), \pi\} = 0.
\]
We prove that $z_{j'} = \min\{d_{X_n}(x, A_{ij'}^n), \pi\} \ge \delta$ if $j' \neq j$.
Take any $x' \in A_{ij'}^n$.
There exist $r, r' \in I_i$ such that
\[
rx \in f_n^{-1}(g_n^{-1}(B_j)\cap \tY_n) \quad \text{and} \quad r'x' \in f_n^{-1}(g_n^{-1}(B_{j'})\cap \tY_n).
\]
Then, by Lemma \ref{lem:cone_met}, we have
\begin{align*}
\delta' &\le d_Y(B_j, B_{j'}) \le d_Y(g_n\circ f_n(rx), g_n\circ f_n(r'x')) \\
&\le d_{K_0(X_n)}(rx, r'x') +\ep_n \le |r-r'| + Rd_{X_n}(x,x') + \ep_n \\
&\le \eta + Rd_{X_n}(x, x') + \ep_n,
\end{align*}
which implies that $d_{X_n}(x, x') \ge \delta$ if $n$ is sufficiently large. Thus we obtain $z_{j'} \ge \delta$. The proof is completed.
\end{proof}

By this claim and the portmanteau theorem of weak convergence of measures, we have
\[
\nu_i(Z_j) \ge \limsup_{n\to\infty} (\Phi_{n,i})_\# m_{X_n}(Z_j) \ge \limsup_{n\to \infty} m_{X_n}(A_{ij}^n)
\]
for any $j=1,\ldots,N$.
Now we define a Borel measurable map $\Psi\colon K_{\mu}(X) \to \dot{Y}$ by
\[
\Psi(rx) :=
\begin{cases}
y_j & \text{if } r \in I_i \text{ and } \proj_i(x) \in Z_j, \\
y_0 & \text{otherwise}
\end{cases}
\]
for $rx \in K_\mu(X)$.
Here, if $B_0 =\emptyset$, then we replace $y_0$ with an arbitrary point of $\{y_1,\ldots,y_N\}$.
Note that if $\mu(\{0\})>0$, then we have $\nu_{i_0}(Z_{j_0}) = 1$ and then
\[
X = \supp{\nu} \subset \proj_{i_0}^{-1}(Z_{j_0}),
\]
which implies that $\Psi(0) = y_{j_0}$. Thus $\Psi$ is well-defined on $K_\mu(X)$.
From now on, we denote $d_{K_n} := d_{K_0(X_n)}$, $d_K := d_{K_0(X)}$, $m_{K_n} := m_{K_{\mu_n}(X_n)}$, and $m_K := m_{K_\mu(X)}$ in this proof for simplicity.

We first prove that $\prok(\Psi_\# m_K, m_{\dot{Y}}) \le 2\ep$.
For any $j=1,\ldots,N$, we have
\begin{align*}
m_{\dot{Y}}(\{y_j\}) &= m_Y(B_j) \le \liminf_{n\to\infty} (g_n)_\# m_{Y_n}(B_j) \\
&\le \liminf_{n\to\infty} (m_{Y_n}(g_n^{-1}(B_j)\cap \tY_n) + \ep_n)
\le \limsup_{n\to\infty} m_{K_n}(f_n^{-1}(g_n^{-1}(B_j)\cap \tY_n)) \\
&= \limsup_{n\to\infty} \mu_n \otimes m_{X_n}(q^{-1}(f_n^{-1}(g_n^{-1}(B_j)\cap \tY_n))) \\
&\le \limsup_{n\to\infty} \sum_{i=1}^M \mu_n \otimes m_{X_n}(q^{-1}(f_n^{-1}(g_n^{-1}(B_j)\cap \tY_n)) \cap (I_i \times X_n)) +\mu_n((R,\infty)) \\
&\le \limsup_{n\to\infty} \sum_{i=1}^M \mu_n(I_i) \, m_{X_n}(A_{ij}^n) +\frac{\ep}{N}
\le \sum_{i=1}^M \mu(I_i) \, \nu_i(Z_j) +\frac{\ep}{N}.
\end{align*}
On the other hand, we have
\begin{align*}
\Psi_\# m_K(\{y_j\}) &= m_K(\bigsqcup_{i=1}^M q(I_i \times \proj_i^{-1}(Z_j))) \\
&\ge \mu \otimes \nu(\bigsqcup_{i=1}^M I_i \times \proj_i^{-1}(Z_j)) = \sum_{i=1}^M \mu(I_i)\nu_i(Z_j).
\end{align*}
These imply that
\[
m_{\dot{Y}}(\{y_j\}) \le \Psi_\# m_K(\{y_j\}) + \frac{\ep}{N}
\]
for any $j=1,\ldots,N$.
Taking $m_{\dot{Y}}(\{y_0\}) < \ep$ into account, we obtain
\[
\prok(\Psi_\# m_K, m_{\dot{Y}}) < \ep+N\cdot\frac{\ep}{N} = 2\ep.
\]

We next prove that $\Psi$ is 1-Lipschitz up to $3\ep$.
Letting
\[
\tK := q(\bigsqcup_{i=1}^M (I_i \times \proj_i^{-1}(\supp{\nu_i} \cap \bigsqcup_{j=1}^N Z_j))),
\]
we have
\[
m_K(\tK) \ge \sum_{i=1}^M \sum_{j=1}^N \mu(I_i) \nu_i(Z_j) \ge \sum_{j=1}^N(m_{\dot{Y}}(\{y_j\})-\frac{\ep}{N}) \ge 1-2\ep.
\]
Take any $sx, s'x' \in \tK$ and fix them.
There exist $i, i' \in \{1,\ldots,M\}$ and $j,j' \in \{1,\ldots,N\}$ such that
\[
s \in I_i, \quad \proj_i(x) \in Z_j, \quad \text{and} \quad s' \in I_{i'}, \quad  \proj_{i'}(x') \in Z_{j'}.
\]
Then, since $(\Phi_n)_\# m_{X_n}$ converges weakly to $\nu$, there exist $x_n \in A_{ij}^n$ and $x'_n \in A_{i'j'}^n$ for each $n$ such that
\[
\|\Phi_n(x_n) - x\|_\infty, \|\Phi_n(x'_n) -x'\|_\infty \to 0 \text{ as } n\to\infty.
\]
We prove
\begin{equation}\label{eq:Psi_1-Lip_upto}
d_Y(y_j, y_{j'}) \le \sqrt{s^2+(s')^2-2ss'\cos(\min\{d_{X_n}(x_n, A_{i'j'}^n), \pi\})} +\ep_n+ 3\ep.
\end{equation}
Indeed, taking any $\tilde{x}' \in A_{i'j'}^n$, there exist $t \in I_i$ and $t' \in I_{i'}$ such that
\[
tx_n \in f_n^{-1}(g_n^{-1}(B_j) \cap \tY_n) \quad \text{and} \quad t'\tilde{x}' \in f_n^{-1}(g_n^{-1}(B_{j'}) \cap \tY_n).
\]
Thus we have
\begin{align*}
d_Y(y_j, y_{j'}) &\le d_Y(B_j, B_{j'}) +2\ep \le d_Y(g_n\circ f_n(tx_n), g_n\circ f_n(t'\tilde{x}')) + 2\ep \\
&\le d_{K_n}(tx_n, t'\tilde{x}') + \ep_n + 2\ep \le d_{K_n}(sx_n, s'\tilde{x}') + |s-t| + |s'-t'| + \ep_n +2\ep\\
&< d_{K_n}(sx_n, s'\tilde{x}') + 2\eta + \ep_n +2\ep < d_{K_n}(sx_n, s'\tilde{x}') + \ep_n +3\ep.
\end{align*}
Therefore we obtain \eqref{eq:Psi_1-Lip_upto}.
Moreover, since
\begin{align*}
\min\{d_{X_n}(x_n, A_{i'j'}^n), \pi\} &= \min\{d_{X_n}(x_n, A_{i'j'}^n), \pi\} - \min\{d_{X_n}(x'_n, A_{i'j'}^n), \pi\} \\
&\le \|\Phi_n(x_n)-\Phi_n(x'_n)\|_\infty,
\end{align*}
we have
\begin{align*}
d_Y(y_j, y_{j'}) &\le \lim_{n\to\infty} \sqrt{s^2+(s')^2-2ss'\cos(\|\Phi_n(x_n)-\Phi_n(x'_n)\|_\infty)} +\ep_n+ 3\ep \\
&= \sqrt{s^2+(s')^2-2ss'\cos(\|x-x'\|_\infty)} + 3\ep \\
& = d_K(sx, s'x') + 3\ep.
\end{align*}
Therefore $\Psi$ is 1-Lipschitz up to $3\ep$.

We apply Lemma \ref{lem:error_in_py} to the map $\iota\circ\Psi$ and then obtain $Y \in K_\mu(\cP)$.
This completes the proof of the case $\kappa = 0$.

In the case of $\kappa<0$, taking
\[
\delta < \min\{\frac{\delta'}{3\sk(R)}, \pi\},
\]
we obtain Claim \ref{clm:Phi_Z} by Lemma \ref{lem:cone_met} and
\[
d_Y(y_j, y_{j'}) \le \ck^{-1}(\ck(s)\ck(s')+\kappa\sk(s)\sk(s')\cos(\min\{d_{X_n}(x_n, A_{i'j'}^n), \pi\})) +\ep_n+ 3\ep
\]
instead of \eqref{eq:Psi_1-Lip_upto}.
Thus we can prove that $\Psi$ is 1-Lipschitz up to $3\ep$ and $\prok(\Psi_\# m_{K_{\kappa, \mu}(X)}, m_{\dot{Y}}) \le 2\ep$.
In the case of $\kappa > 0$, we have to pay attention to the handling of the two points $0, \pi/\sqrt{\kappa} \in K_\kappa(X)$.
Eventually, it can be proved similarly by Lemma \ref{lem:cone_met}.
We now finish the proof of Theorem \ref{thm:cone_py}.
\end{proof}

\section{Another simple proof for L\'evy families}\label{sec:main_proof}

In this section, we give another simple proof to Theorem \ref{thm:main}.

\begin{prop}\label{prop:f_r}
Let $f$ be a $1$-Lipschitz function on the $\kappa$-cone of a metric space $X$.
For any $r \in I_\kappa$, we define a function $f_r\colon X \to \R$ by
\begin{equation}\label{eq:f_r}
f_r(x) := f(rx), \quad x \in X.
\end{equation}
Then $f_r$ is $\tsk(r)$-Lipschitz with respect to $\min\{d_X, \pi\}$.
\end{prop}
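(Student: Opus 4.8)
The plan is to reduce the whole statement to the single distance estimate recorded in Lemma \ref{lem:cone_met}(2), which handles exactly the case of two cone points sharing a common radial coordinate. Fix $r \in I_\kappa$ and take arbitrary $x, x' \in X$. By the definition \eqref{eq:f_r} of $f_r$ we have $f_r(x) = f(rx)$ and $f_r(x') = f(rx')$, where $rx, r x' \in K_\kappa(X)$. Since $f$ is assumed $1$-Lipschitz on the cone, I would first write
\[
|f_r(x) - f_r(x')| = |f(rx) - f(rx')| \le d_{K_\kappa(X)}(rx, r x').
\]
The point to emphasize is that both arguments carry the \emph{same} radial coordinate $r$, so the two points lie on the common ``sphere'' of radius $r$ in $K_\kappa(X)$; their cone distance is therefore controlled purely by the angular separation $\min\{d_X(x,x'),\pi\}$ rather than by any radial displacement.

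The second step is to feed this into Lemma \ref{lem:cone_met}(2), which states precisely that $d_{K_\kappa(X)}(rx, r x') \le \tsk(r)\min\{d_X(x,x'),\pi\}$ for all $r \in I_\kappa$ and $x,x' \in X$. Chaining the two inequalities gives
\[
|f_r(x) - f_r(x')| \le \tsk(r)\,\min\{d_X(x,x'),\pi\},
\]
and since $x,x'$ were arbitrary this is exactly the assertion that $f_r$ is $\tsk(r)$-Lipschitz with respect to the truncated distance $\min\{d_X,\pi\}$, completing the argument.

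I do not expect any genuine obstacle here: the entire geometric content has been packaged into Lemma \ref{lem:cone_met}(2), which itself follows from the cone-distance formula \eqref{eq:cone_dist} and the comparison with surfaces of constant curvature. If one wishes, it is worth noting in passing \emph{why} the Lipschitz constant is $\tsk(r) = \boldsymbol{s}_{\min\{\kappa,0\}}(r)$ and not $\sk(r)$: for $\kappa \le 0$ the factor is the genuine $\sk(r)$, whereas for $\kappa > 0$ the truncation $\min\{\cdot,\pi\}$ together with the round geometry forces the weaker linear factor $\tsk(r) = r$, which is already built into the cited lemma. Hence no separate case analysis on the sign of $\kappa$ is needed in this proof beyond invoking Lemma \ref{lem:cone_met}(2).
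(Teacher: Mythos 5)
Your proof is correct and is exactly the argument the paper intends: the paper's proof consists of the single sentence that the proposition follows directly from Lemma \ref{lem:cone_met}, and you have simply spelled out that chain of inequalities (composing the $1$-Lipschitz bound for $f$ with Lemma \ref{lem:cone_met}(2) at fixed radius $r$). Your closing remark about why the constant is $\tsk(r)$ rather than $\sk(r)$ is also accurate but not needed.
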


\begin{proof}
This proposition follows from Lemma \ref{lem:cone_met} directly.
\end{proof}

\begin{prop}[cf.~\cite{MMG}*{Proof of Proposition 7.32}]\label{prop:lm_1-Lip}
Let $f$ be a $1$-Lipschitz function on the $\kappa$-cone of an mm-space $X$.
We define a function $\bar{f} \colon I_\kappa \to \R$ by
\begin{equation}\label{eq:bar_f}
\bar{f}(r) := \lm(f_r; m_X), \quad r \in I_\kappa,
\end{equation}
where $f_r$ is defined as \eqref{eq:f_r} for $r \in I_\kappa$.
Then the function $\bar{f}$ is $1$-Lipschitz on $I_\kappa$.
\end{prop}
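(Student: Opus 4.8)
The plan is to reduce the Lipschitz estimate for $\bar{f}$ to a pointwise comparison of the radial slices $f_r$ and $f_{r'}$, and then to a general stability property of the L\'evy mean under a uniform perturbation of the function.

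First I would fix $r, r' \in I_\kappa$ and use part (1) of Lemma \ref{lem:cone_met}, which gives $d_{K_\kappa(X)}(rx, r'x) = |r-r'|$ for every $x \in X$ (the radial directions in the cone are geodesic and isometric to $I_\kappa$). Since $f$ is $1$-Lipschitz, this immediately yields $|f_r(x) - f_{r'}(x)| = |f(rx) - f(r'x)| \le |r-r'|$ for every $x \in X$; that is, $f_r$ and $f_{r'}$ are uniformly within $c := |r-r'|$ of each other pointwise on $X$. Note that $f_r$ is $\tsk(r)$-Lipschitz by Proposition \ref{prop:f_r}, hence measurable, so its medians and L\'evy mean are well defined.

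The core of the argument is the following claim: if two $m_X$-measurable functions $g, h \colon X \to \R$ satisfy $|g(x) - h(x)| \le c$ for all $x$, then $|\lm(g; m_X) - \lm(h; m_X)| \le c$. I would prove it by treating the minimal and maximal medians separately. Writing $\overline{m}_g, \overline{m}_h$ for the maximal medians, the inclusion $\{g \ge \overline{m}_g\} \subset \{h \ge \overline{m}_g - c\}$ gives $m_X(\{h \ge \overline{m}_g - c\}) \ge m_X(\{g \ge \overline{m}_g\}) \ge \frac{1}{2}$. Since the maximal median of $h$ can be characterized as the supremum of the levels $t$ with $m_X(\{h \ge t\}) \ge \frac{1}{2}$ (using that $t \mapsto m_X(\{h \ge t\})$ is nonincreasing and that the median set is a closed interval), this forces $\overline{m}_g - c \le \overline{m}_h$. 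Exchanging the roles of $g$ and $h$ gives $|\overline{m}_g - \overline{m}_h| \le c$, and the symmetric argument using $\{g \le \underline{m}_g\} \subset \{h \le \underline{m}_g + c\}$ gives $|\underline{m}_g - \underline{m}_h| \le c$ for the minimal medians $\underline{m}_g, \underline{m}_h$. Averaging, $|\lm(g;m_X) - \lm(h;m_X)| \le \frac{1}{2}\left(|\underline{m}_g - \underline{m}_h| + |\overline{m}_g - \overline{m}_h|\right) \le c$.

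Applying the claim to $g = f_r$ and $h = f_{r'}$ with $c = |r-r'|$ then yields $|\bar{f}(r) - \bar{f}(r')| = |\lm(f_r; m_X) - \lm(f_{r'}; m_X)| \le |r-r'|$, so $\bar{f}$ is $1$-Lipschitz on $I_\kappa$. The main obstacle I anticipate is the median-comparison step: because medians need not be unique, one cannot manipulate ``the'' median as a single number, but must handle the minimal and maximal medians separately and justify the supremum/infimum characterizations of each from the monotonicity of the distribution function. Once that is in place, everything else is routine.
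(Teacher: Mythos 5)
Your proposal is correct and follows essentially the same route as the paper: both derive the pointwise bound $|f_r(x)-f_{r'}(x)|\le|r-r'|$ from the radial isometry $d_{K_\kappa(X)}(rx,r'x)=|r-r'|$, then compare the minimal and maximal medians separately via inclusions of sublevel/superlevel sets before averaging. Your extraction of this as a standalone stability claim for the L\'evy mean under uniform perturbation is a harmless repackaging of the paper's argument, not a different method.
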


\begin{proof}
We denote by $\underline{m}(r)$ and $\overline{m}(r)$ the minimum and maximum of medians of $f_r$ with respect to $m_X$, respectively.
Note that
\[
\bar{f}(r) = \frac{\underline{m}(r) + \overline{m}(r)}{2}.
\]
Then, for any $r,r' \in I_\kappa$, since
\[
|f_r(x)-f_{r'}(x)| \le d_{K_\kappa(X)}(rx, r'x) = |r-r'|,
\]
we have
\[
m_X(\left\{x \in X \midd f_{r'}(x) \le \underline{m}(r) + |r-r'|\right\}) \ge m_X(\left\{x \in X \midd f_r(x) \le \underline{m}(r)\right\}) \ge \frac{1}{2},
\]
which together with the minimality of $\underline{m}(r')$ implies
\[
\underline{m}(r') \le \underline{m}(r) + |r-r'|.
\]
Exchanging $r$ and $r'$, we obtain
\[
|\underline{m}(r) - \underline{m}(r')| \le |r-r'|.
\]
Thus $\underline{m}$ is $1$-Lipschitz on $I_\kappa$.
Similarly, $\overline{m}$ is also 1-Lipschitz on $I_\kappa$, and then $\bar{f}$ is 1-Lipschitz on $I_\kappa$.
The proof is completed.
\end{proof}

\begin{proof}[Proof of Theorem \ref{thm:main}]
For any $f \in \Lip_1(K_\kappa(X_n))$, the functions $f_r\colon X_n \to \R$, $r \in I_\kappa$, and $\bar{f}\colon I_\kappa \to \R$ are defined as \eqref{eq:f_r} and \eqref{eq:bar_f}, respectively.
Propositions \ref{prop:f_r} and \ref{prop:lm_1-Lip} imply that $f_r$ is $\tsk(r)$-Lipschitz with respect to $\min\{d_{X_n}, \pi\}$ and $\bar{f}$ is $1$-Lipschitz on $I_\kappa$. Moreover we have
\[
|f_r(x) - \bar{f}(r)| \le \diam{f_r(X_n)} \le \pi \tsk(r)
\]
for any $r\in I$ and $x \in X_n$.

In order to prove Theorem \ref{thm:main}, by Lemma \ref{lem:enforce} and the assumption, it is sufficient to prove that,
for any $\ep>0$, there exists a number $N\in\N$ such that any $f \in \Lip_1(K_\kappa(X_n))$ satisfies
\begin{equation}\label{eq:main_proof}
m_{K_{\kappa,\mu_n}(X_n)}(\left\{rx \in K_\kappa(X_n) \midd |f(rx) - \bar{f}(r)| \ge \ep\right\}) \le \ep
\end{equation}
if $n \ge N$.
This together with the $1$-Lipschitz continuity of $p_n(rx):= r$ on $K_\kappa(X_n)$ implies
\[
\haus(\Lip_1(K_\kappa(X_n)), (p_n)^*\Lip_1(I_\kappa)) \le \ep,
\]
where the Hausdorff distance is defined with respect to $\kf^{m_{K_{\kappa, \mu_n}(X_n)}}$.

Since $\{\mu_n\}_{n=1}^\infty$ converges weakly, there exists $R > 0$ such that
\[
\sup_{n\in\N} \mu_n((R,\infty)) < \ep.
\]
We assume that $R = \pi/\sqrt{\kappa}$ if $\kappa>0$.
Thus we have
\begin{align*}
&m_{K_{\kappa, \mu_n}(X_n)}(\left\{rx \in K_{\kappa}(X_n) \midd |f(rx) - \bar{f}(r)| \ge \ep\right\}) \\
&\le m_{K_{\kappa, \mu_n}(X_n)}(\left\{rx \in K_{\kappa}(X_n) \midd |f(rx) - \bar{f}(r)| \ge \ep \text{ and } r \le R\right\}) + \ep.
\end{align*}
Here, by the Markov inequality, we calculate
\begin{align*}
&m_{K_{\kappa, \mu_n}(X_n)}(\left\{rx \in K_{\kappa}(X_n) \midd |f(rx) - \bar{f}(r)| \ge \ep \text{ and } r \le R\right\})\\
&=m_{K_{\kappa, \mu_n}(X_n)}(\left\{rx \in K_{\kappa}(X_n) \midd \mathbf{1}_{\{r\le R\}}(rx)|f(rx) - \bar{f}(r)| \ge \ep\right\})\\
&\le \frac{1}{\ep} \int_{q([0,R] \times X_n)} |f(rx)-\bar{f}(r)| \, dm_{K_{\kappa,\mu_n}(X_n)}(rx) \\
&= \frac{1}{\ep} \int_0^R \left(\int_{X_n}\left|f_r(x)-\bar{f}(r)\right| \, dm_{X_n}(x)\right) d\mu_n(r).
\end{align*}
By Proposition \ref{prop:conc_fct_est}, we have
\[
m_{X_n}\left(\left\{x \in X_n\midd |f_r(x) - \bar{f}(r)| \ge \ep^2 \right\}\right)
\le 2\alpha_{X_n}(\frac{\ep^2}{\tsk(r)}).
\]
Thus we estimate
\begin{align*}
&\int_{X_n} \left|f_r- \bar{f}(r)\right| \, dm_{X_n} \\
&= \int_{|f_r-\bar{f}(r)|\le\ep^2} \left|f_r-\bar{f}(r) \right| \, dm_{X_n} + \int_{|f_r-\bar{f}(r)|>\ep^2} \left|f_r-\bar{f}(r) \right| \, dm_{X_n} \\
&\le \ep^2 + \pi \tsk(r) \, m_{X_n}(\{|f_r - \bar{f}(r)| > \ep^2 \}) \le \ep^2 + 2\pi \tsk(r) \,\alpha_{X_n}(\frac{\ep^2}{\tsk(r)}).
\end{align*}
These imply that
\begin{align*}
&m_{K_{\kappa,\mu_n}(X_n)}(\left\{rx \in K_\kappa(X_n) \midd |f(rx) - \bar{f}(r)| \ge \ep\right\}) \\
&\le \frac{1}{\ep}
\int_0^R \left(\ep^2 + 2\pi \tsk(r) \, \alpha_{X_n}(\frac{\ep^2}{\tsk(r)})\right) d\mu_n(r) + \ep \\
&\le 2\ep + \frac{2\pi}{\ep}
\int_0^R \alpha_{X_n}(\frac{\ep^2}{\tsk(r)}) \, \tsk(r) d\mu_n(r)\\
&\le 2\ep + \frac{2\pi \tsk(R)}{\ep}
\alpha_{X_n}(\frac{\ep^2}{\tsk(R)}),
\end{align*}
where the last inequality follows from the fact that  $r \mapsto \alpha_{X_n}(\ep^2/r)$ is non-decreasing.
Since $\{X_n\}_{n=1}^\infty$ is a L\'evy family, the concentration function $\alpha_{X_n}$ converges pointwise to $0$ as $n\to\infty$. Thus, we obtain
\[
\limsup_{n\to\infty}
m_{K_{\kappa,\mu_n}(X_n)}(\left\{rx \in K_\kappa(X_n) \midd |f(rx) - \bar{f}(r)| \ge \ep\right\}) \le 2\ep.
\]
Therefore we obtain \eqref{eq:main_proof}. We finish the proof of Theorem \ref{thm:main}.
\end{proof}

\begin{rem}
By \cite{Led}*{Proposition 1.8}, \eqref{eq:main_proof} also holds if $\bar{f}$ is  replaced with
\[
\bar{f}(r) := \int_{X_n} f(rx) \, dm_{X_n}(x).
\]
\end{rem}

We also obtain the following corollary about the Hopf action.
Let $F = \R$, $\C$, or $\Hb$, where $\Hb$ is the algebra of quaternions, and let $d:= \dim_\R{F}$.
The multiplicative group
\[
U^F := \left\{t \in F \midd \|t\| = 1\right \}
\]
acts on the unit sphere $S^{dn-1}$ in $F^n$ by
\[
S^{dn-1} \times U^F \ni ((x_1, \ldots, x_n), t) \mapsto (tx_1, \ldots, tx_n) \in S^{dn-1},
\]
where $(x_1, \ldots, x_n) \in S^{dn-1} \subset F^n$ with $x_i \in F$.
This action extends to the cone $K_\kappa(S^{dn-1})$ as
\[
K_\kappa(S^{dn-1}) \times U^F \ni (rx, t) \mapsto r(tx) \in K_\kappa(S^{dn-1})
\]
and it is isometric and preserves the measure $m_{K_{\kappa,\mu}(S^{dn-1})}$ for any $\mu$, that is, the mm-action.
Thus we have the quotient mm-space
\[
K_{\kappa, \mu}(S^{dn-1})/U^F
\]
but, after all, this coincides with the cone of the quotient of spheres
\[
K_{\kappa, \mu}(S^{dn-1}/U^F).
\]

\begin{cor}\label{cor:Hopf}
Let $F = \R$, $\C$, or $\Hb$ and let $d:= \dim_\R{F}$ and $\kappa \in \R$.
Assume that a sequence $\{\mu_n\}_{n=1}^\infty$ of Borel probability measures on $I_\kappa$ converges weakly to a Borel probability measure $\mu$.
Then the sequence $\{K_{\kappa,\mu_n}(S^{dn-1})/U^F\}_{n=1}^\infty$ concentrates to the interval $(I_\kappa, |\cdot|, \mu)$.
\end{cor}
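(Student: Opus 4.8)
The plan is to deduce this from Theorem~\ref{thm:main} by means of the identification $K_{\kappa,\mu_n}(S^{dn-1})/U^F = K_{\kappa,\mu_n}(S^{dn-1}/U^F)$ noted just above. With that identification in hand, it suffices to prove that the sequence $\{S^{dn-1}/U^F\}_{n=1}^\infty$ of quotient spheres is a L\'evy family; Theorem~\ref{thm:main} applied to $X_n := S^{dn-1}/U^F$ then yields concentration of $\{K_{\kappa,\mu_n}(S^{dn-1}/U^F)\}_{n=1}^\infty$ to $(I_\kappa,|\cdot|,\mu)$, which is exactly the assertion.

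To verify the L\'evy family property, I would first observe that the quotient projection $\pi_n\colon S^{dn-1}\to S^{dn-1}/U^F$ is $1$-Lipschitz and pushes $\sigma^{dn-1}$ forward to the quotient measure $m_{S^{dn-1}/U^F}$. This is immediate from the fact that the $U^F$-action is isometric and measure-preserving, so that the quotient metric is $d([x],[x'])=\inf_{t\in U^F}d_{S^{dn-1}}(x,tx')$ and the quotient measure is by construction $(\pi_n)_\#\sigma^{dn-1}$. Consequently $S^{dn-1}/U^F \prec S^{dn-1}$ in the Lipschitz order.

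Next I would use that the L\'evy family property is inherited under Lipschitz domination. Concretely, for any $g\in\Lip_1(S^{dn-1}/U^F)$ the composite $g\circ\pi_n$ lies in $\Lip_1(S^{dn-1})$ and satisfies $(g\circ\pi_n)_\#\sigma^{dn-1}=g_\# m_{S^{dn-1}/U^F}$, so the class of push-forward measures on $\R$ realized by $1$-Lipschitz functions on the quotient is contained in the one realized on the sphere. Hence $\OD(S^{dn-1}/U^F;-t)\le\OD(S^{dn-1};-t)$ for every $t>0$. Since $dn-1\to\infty$, the concentration estimate \eqref{eq:conc_sphere} gives $\alpha_{S^{dn-1}}(r)\le e^{-\frac{dn-2}{2}r^2}\to 0$ for each $r>0$, so $\{S^{dn-1}\}_{n=1}^\infty$ is a L\'evy family and $\OD(S^{dn-1};-t)\to 0$. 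The displayed inequality then forces $\OD(S^{dn-1}/U^F;-t)\to 0$ for every $t>0$, so $\{S^{dn-1}/U^F\}_{n=1}^\infty$ is a L\'evy family, completing the reduction.

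The argument carries no genuine obstacle: it is a formal consequence of Theorem~\ref{thm:main} once the quotient sphere is recognized as a Lipschitz-dominated, hence L\'evy, family. The only points deserving a line of care are the measure-preserving $1$-Lipschitz property of $\pi_n$, which rests on the Hopf action being an mm-action, and the monotonicity of the observable diameter under the Lipschitz order; both are routine, the latter because precomposition with a $1$-Lipschitz measure-preserving map can only shrink the family of push-forwards defining $\OD$.
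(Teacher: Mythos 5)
Your proposal is correct and follows essentially the same route as the paper: the paper's proof is exactly the reduction via the identification $K_{\kappa,\mu_n}(S^{dn-1})/U^F = K_{\kappa,\mu_n}(S^{dn-1}/U^F)$ together with the fact that $\{S^{dn-1}/U^F\}_{n=1}^\infty$ is a L\'evy family, followed by an application of Theorem \ref{thm:main}. The only difference is that the paper states the L\'evy family property of the quotient spheres without proof, whereas you supply the standard justification via Lipschitz domination $S^{dn-1}/U^F \prec S^{dn-1}$ and monotonicity of the observable diameter.
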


\begin{proof}
This directly follows from Theorem \ref{thm:main} since
\[
K_{\kappa,\mu_n}(S^{dn-1})/U^F = K_{\kappa,\mu_n}(S^{dn-1}/U^F)
\]
and $\{S^{dn-1}/U^F\}_{n=1}^\infty$ is a L\'evy family.
\end{proof}

In this type of concentration, the limit space is independent of the Hopf action.

\begin{rem}
In \cite{NS}, the concentration with group action, called the {\it equivariant concentration}, has been studied.
The equivariant concentration implies the concentration of the quotient spaces.
Actually, we can prove that the pair $(K_{\kappa,\mu_n}(S^{dn-1}), U^F)$ equivariant concentrates to $((I_\kappa, |\cdot|, \mu), \{\id_{I_\kappa}\})$ from our proof of Theorem \ref{thm:main}.
\end{rem}

\section{Cauchy distribution}\label{sec:cauchy}
In this section, we deal with the Cauchy distribution and prove Theorems \ref{thm:main_cauchy} and \ref{thm:main_cauchy_gene} by applying Theorem \ref{thm:main}.
We recall the Cauchy measure $\nu^n_\beta$ on $\R^n$ as in \eqref{eq:Cauchy};
\[
\nu^n_\beta(dx) := \frac{\Gamma(\frac{n+\beta}{2})}{\pi^{\frac{n}{2}}\Gamma(\frac{\beta}{2})}\frac{dx}{(1+\|x\|^2)^\frac{n+\beta}{2}},
\]
and the Borel probability measure $\nu_\beta$ on $[0,\infty)$ as in \eqref{eq:recip};
\[
\nu_\beta(dt) := \frac{2^{1-\frac{\beta}{2}}}{\Gamma(\frac{\beta}{2})} \frac{1}{t^{\beta+1}} e^{-\frac{1}{2t^2}} \, dt.
\]
We denote $C^n_\beta := (\R^n, \|\cdot\|, \nu^n_\beta)$ and call it the $n$-dimensional Cauchy space.

\begin{prop}\label{prop:Cauchy_dom}
Let $\pi^n_k \colon \R^n \to \R^k$, $n>k$, be the natural projection given by
\[
\pi^n_k(x_1,\ldots,x_n) = (x_1, \ldots, x_k).
\]
Then we have
\begin{equation}\label{eq:consistency}
(\pi^n_k)_\# \nu^n_\beta = \nu^k_\beta.
\end{equation}
In particular, we have
\[
C^1_\beta \prec C^2_\beta \prec \cdots \prec C^n_\beta \prec \cdots.
\]
\end{prop}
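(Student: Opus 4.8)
The plan is to verify the consistency relation \eqref{eq:consistency} by a direct computation of the marginal density, and then to read off the Lipschitz order as an immediate consequence. Since the projections compose as $\pi^n_k = \pi^{k+1}_k \circ \pi^{k+2}_{k+1} \circ \cdots \circ \pi^n_{n-1}$ and the pushforward is functorial, I would first reduce to the single-step case $(\pi^n_{n-1})_\# \nu^n_\beta = \nu^{n-1}_\beta$ and obtain the general statement by iterating. Writing a point of $\R^n$ as $(y,z)$ with $y \in \R^{n-1}$ and $z \in \R$, and using $\|(y,z)\|^2 = \|y\|^2 + z^2$, the density of $(\pi^n_{n-1})_\# \nu^n_\beta$ at $y$ is obtained by integrating the density in \eqref{eq:Cauchy} over $z \in \R$.

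The heart of the matter is this one-variable integral. Setting $a^2 := 1 + \|y\|^2$ and substituting $z = at$, I would factor the integral as $a^{\,1-(n+\beta)} \int_{\R} (1+t^2)^{-(n+\beta)/2}\,dt$, and then evaluate the remaining integral by the Beta-function identity $\int_{\R}(1+t^2)^{-s}\,dt = \sqrt{\pi}\,\Gamma(s-\tfrac12)/\Gamma(s)$ with $s = (n+\beta)/2$, so that $s-\tfrac12 = (n-1+\beta)/2$. Multiplying by the normalizing constant $\Gamma(\tfrac{n+\beta}{2})/(\pi^{n/2}\Gamma(\tfrac{\beta}{2}))$, the factor $\Gamma(\tfrac{n+\beta}{2})$ cancels, one power of $\pi^{1/2}$ combines with $\pi^{-n/2}$ to give $\pi^{-(n-1)/2}$, and $a^{\,1-(n+\beta)} = (1+\|y\|^2)^{-(n-1+\beta)/2}$; what remains is exactly the density of $\nu^{n-1}_\beta$. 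The only real care needed is the bookkeeping of the Gamma factors and powers of $\pi$ and $(1+\|y\|^2)$, which is routine. (Alternatively, one could integrate out all $n-k$ variables at once using polar coordinates in $\R^{n-k}$ together with the full Beta identity; the single-step version is slightly cleaner in that it avoids the $(n-k-1)$-sphere volume constant.)

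For the Lipschitz order, the projection $\pi^n_k$ is $1$-Lipschitz with respect to the Euclidean norms, since $\|\pi^n_k(x)-\pi^n_k(x')\| \le \|x-x'\|$, and \eqref{eq:consistency} asserts precisely that it is measure-preserving from $C^n_\beta$ onto $C^k_\beta$. By the definition of the Lipschitz order this yields $C^k_\beta \prec C^n_\beta$, and specializing to $k = n-1$ produces the asserted chain $C^1_\beta \prec C^2_\beta \prec \cdots$. I do not anticipate a genuine obstacle here; the proposition is a computational lemma whose content is the marginalization formula, and the main point to get right is the exact matching of the normalizing constants after the Beta-function evaluation.
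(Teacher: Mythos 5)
Your proposal is correct and follows essentially the same route as the paper: reduce to the single-step marginal $(\pi^n_{n-1})_\#\nu^n_\beta=\nu^{n-1}_\beta$, substitute $x_n=(1+\|y\|^2)^{1/2}t$ to factor out $\int_\R(1+t^2)^{-(n+\beta)/2}\,dt=\sqrt{\pi}\,\Gamma(\tfrac{n-1+\beta}{2})/\Gamma(\tfrac{n+\beta}{2})$, and match the normalizing constants; the Lipschitz-order conclusion from the $1$-Lipschitz measure-preserving projection is likewise the paper's (implicit) argument. No gaps.
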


\begin{proof}
It is sufficient to prove the case of $k=n-1$.
Take any Borel measurable function $f$ on $\R^{n-1}$. Changing of variable
\[
x_n = (1 + x_1^2 + \cdots + x_{n-1}^2)^{\frac{1}{2}} y,
\]
we have
\begin{align*}
\int_{\R^{n-1}} f \, d(\pi^n_{n-1})_\#\nu^{n}_\beta &=
\frac{\Gamma(\frac{n+\beta}{2})}{\pi^{\frac{n}{2}}\Gamma(\frac{\beta}{2})}\int_{\R^n} \frac{f(x_1,\ldots, x_{n-1})}{(1+x_1^2+\cdots+x_n^2)^\frac{n+\beta}{2}} \, dx_1\cdots dx_n \\
&= \frac{\Gamma(\frac{n+\beta}{2})}{\pi^{\frac{n}{2}}\Gamma(\frac{\beta}{2})}
\int_{-\infty}^\infty  \frac{dy}{(1+y^2)^{\frac{n+\beta}{2}}} \int_{\R^{n-1}} \frac{f(x_1,\ldots, x_{n-1})\, dx_1\cdots dx_{n-1}}{(1+x_1^2+\cdots+x_{n-1}^2)^{\frac{n-1+\beta}{2}}} \\
&= \frac{\Gamma(\frac{n+\beta}{2})}{\pi^{\frac{n}{2}}\Gamma(\frac{\beta}{2})}
\frac{\sqrt{\pi}\,\Gamma(\frac{n-1+\beta}{2})}{\Gamma(\frac{n+\beta}{2})} \int_{\R^{n-1}} \frac{f(x_1,\ldots, x_{n-1})\, dx_1\cdots dx_{n-1}}{(1+x_1^2+\cdots+x_{n-1}^2)^{\frac{n-1+\beta}{2}}} \\
&= \int_{\R^{n-1}} f \, d\nu^{n-1}_\beta.
\end{align*}
Therefore we obtain $(\pi^n_{n-1})_\# \nu^n_\beta = \nu^{n-1}_\beta$ and then $C^{n-1}_\beta \prec C^n_\beta$.
\end{proof}

For $\beta > 0$, the Borel probability measure $\tilde{\nu}^n_\beta$ on $\R^n$ is defined by
\begin{equation}\label{eq:scaled_Cauchy}
\tilde{\nu}^n_\beta(dx) := \frac{n^{\frac{n}{2}}\Gamma(\frac{n+\beta}{2})}{\pi^{\frac{n}{2}}\Gamma(\frac{\beta}{2})}\frac{dx}{(1+n\|x\|^2)^\frac{n+\beta}{2}}.
\end{equation}

\begin{prop}\label{prop:scale}
Let $\tC^n_\beta := (\R^n, \|\cdot\|, \tilde{\nu}^n_\beta)$.
Then $\frac{1}{\sqrt{n}}C^n_\beta$ is mm-isomorphic to $\tC^n_\beta$.
\end{prop}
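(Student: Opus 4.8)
The plan is to write down an explicit mm-isomorphism, namely the homothety $f \colon \R^n \to \R^n$ given by $f(x) := x/\sqrt{n}$. Unwinding the definitions, $\frac{1}{\sqrt{n}}C^n_\beta = (\R^n, \tfrac{1}{\sqrt{n}}\|\cdot\|, \nu^n_\beta)$ while $\tC^n_\beta = (\R^n, \|\cdot\|, \tilde{\nu}^n_\beta)$. Both measures have strictly positive Lebesgue density, so both spaces have full support $\R^n$; hence it suffices to check that $f$ is a bijective isometry between the two metric structures and that $f_\# \nu^n_\beta = \tilde{\nu}^n_\beta$.

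The isometry property is immediate: for $x, x' \in \R^n$ we have $\|f(x) - f(x')\| = \tfrac{1}{\sqrt{n}}\|x - x'\|$, which is exactly the distance between $x$ and $x'$ in $\frac{1}{\sqrt{n}}C^n_\beta$. For the push-forward I would change variables via $x = \sqrt{n}\,y$, so that $\|x\|^2 = n\|y\|^2$ and $dx = n^{n/2}\,dy$. For any bounded Borel function $g$ on $\R^n$,
\[
\int_{\R^n} g\, d(f_\# \nu^n_\beta) = \int_{\R^n} g\!\left(\tfrac{x}{\sqrt{n}}\right) \frac{\Gamma(\frac{n+\beta}{2})}{\pi^{n/2}\Gamma(\frac{\beta}{2})}\, \frac{dx}{(1+\|x\|^2)^{\frac{n+\beta}{2}}} = \int_{\R^n} g(y)\, \frac{n^{n/2}\Gamma(\frac{n+\beta}{2})}{\pi^{n/2}\Gamma(\frac{\beta}{2})}\, \frac{dy}{(1+n\|y\|^2)^{\frac{n+\beta}{2}}},
\]
and the right-hand side equals $\int_{\R^n} g\, d\tilde{\nu}^n_\beta$ by the definition \eqref{eq:scaled_Cauchy} of $\tilde{\nu}^n_\beta$. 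Hence $f_\# \nu^n_\beta = \tilde{\nu}^n_\beta$, and $f$ is the required mm-isomorphism.

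There is no real obstacle here: the entire content is a single change of variables, and the only point to watch is that the Jacobian factor $n^{n/2}$ matches exactly the extra constant $n^{n/2}$ built into the normalization of $\tilde{\nu}^n_\beta$. Indeed the definition \eqref{eq:scaled_Cauchy} is manifestly tailored so that the scaling $x \mapsto x/\sqrt{n}$ transports $\nu^n_\beta$ onto it, so the proposition amounts to a consistency check on that definition.
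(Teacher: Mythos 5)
Your proposal is correct and follows exactly the paper's route: the paper likewise takes the homothety $L_n(x) = x/\sqrt{n}$ and asserts that it is an isometry with $(L_n)_\#\nu^n_\beta = \tilde{\nu}^n_\beta$, merely omitting the change-of-variables computation you spell out. The extra detail you provide (the Jacobian $n^{n/2}$ matching the normalization of $\tilde{\nu}^n_\beta$) is accurate and harmless.
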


\begin{proof}
We define a map $L_n \colon \R^n \to \R^n$ by
\[
L_n(x) = \frac{x}{\sqrt{n}}, \quad x \in \R^n.
\]
The map $L_n$ is an isometry from $\frac{1}{\sqrt{n}} C^n_\beta$ to $\tC^n_\beta$ and $(L_n)_\# \nu^n_\beta = \tilde{\nu}^n_\beta$ holds.
Therefore $\frac{1}{\sqrt{n}}C^n_\beta$ and $\tC^n_\beta$ are mm-isomorphic to each other.
\end{proof}

The following is a key lemma to apply Theorem \ref{thm:main} to the Cauchy distribution.

\begin{lem}\label{lem:rad_conv}
Let $p_n \colon \R^n \to [0, \infty)$ be the function defined by
\[
p_n(x) := \|x\|, \quad x\in \R^n.
\]
Then $(p_n)_\# \tilde{\nu}^n_\beta$ converges weakly to the Borel probability measure $\nu_\beta$ as $n\to\infty$.
\end{lem}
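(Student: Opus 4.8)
The plan is to show that each push-forward measure $(p_n)_\# \tilde{\nu}^n_\beta$ has an explicit density with respect to the Lebesgue measure on $[0,\infty)$, to prove that these densities converge pointwise to the density of $\nu_\beta$ appearing in \eqref{eq:recip}, and finally to upgrade pointwise convergence to weak convergence by Scheff\'e's lemma.

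First I would use the rotational symmetry of $\tilde{\nu}^n_\beta$: its density in \eqref{eq:scaled_Cauchy} depends only on $\|x\|$, so the push-forward under the radial map $p_n(x)=\|x\|$ is computed by integrating over spheres. Writing $\omega_{n-1}=2\pi^{n/2}/\Gamma(\tfrac n2)$ for the surface area of the unit sphere $S^{n-1}\subset\R^n$, the measure $(p_n)_\#\tilde{\nu}^n_\beta$ has density
\[
g_n(t)=\frac{2n^{n/2}\Gamma(\tfrac{n+\beta}{2})}{\Gamma(\tfrac{\beta}{2})\,\Gamma(\tfrac n2)}\,\frac{t^{n-1}}{(1+nt^2)^{\frac{n+\beta}{2}}},\qquad t>0,
\]
where the factors $\pi^{n/2}$ from the normalisation of $\tilde{\nu}^n_\beta$ and from $\omega_{n-1}$ cancel exactly.

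Next I would extract the limiting behaviour. Factoring $(1+nt^2)^{(n+\beta)/2}=(nt^2)^{(n+\beta)/2}(1+\tfrac{1}{nt^2})^{(n+\beta)/2}$ and simplifying the powers of $n$ and $t$ gives
\[
g_n(t)=\frac{2n^{-\beta/2}\Gamma(\tfrac{n+\beta}{2})}{\Gamma(\tfrac{\beta}{2})\,\Gamma(\tfrac n2)}\,\frac{1}{t^{\beta+1}}\,\Bigl(1+\tfrac{1}{nt^2}\Bigr)^{-\frac{n+\beta}{2}}.
\]
For fixed $t>0$ the last factor tends to $e^{-1/(2t^2)}$, since $\tfrac{n+\beta}{2}\cdot\tfrac{1}{nt^2}\to\tfrac{1}{2t^2}$. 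For the Gamma factor I would invoke the standard asymptotic $\Gamma(x+a)/\Gamma(x)\sim x^a$ (a consequence of Stirling's formula) with $x=n/2$ and $a=\beta/2$, which yields $n^{-\beta/2}\,\Gamma(\tfrac{n+\beta}{2})/\Gamma(\tfrac n2)\to 2^{-\beta/2}$. Combining these, $g_n(t)$ converges pointwise on $(0,\infty)$ to
\[
\frac{2^{1-\frac{\beta}{2}}}{\Gamma(\tfrac{\beta}{2})}\,\frac{1}{t^{\beta+1}}\,e^{-\frac{1}{2t^2}},
\]
which is precisely the density of $\nu_\beta$.

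Finally, since each $g_n$ and the limit are probability densities on $[0,\infty)$ (the latter because $\nu_\beta$ is a probability measure by construction), Scheff\'e's lemma promotes the pointwise convergence to convergence in $L^1$, hence in total variation, and in particular to weak convergence, which is the claim. The only genuinely non-routine step is the Gamma-ratio asymptotic; everything else is bookkeeping. The one place where an exponent error is easy to make is verifying that the spherical volume factor $t^{n-1}$ cancels against the $t^{n+\beta}$ produced by the dominant term $(nt^2)^{(n+\beta)/2}$ to leave exactly the power $t^{-(\beta+1)}$, so I would check that cancellation explicitly.
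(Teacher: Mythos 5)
Your computation is essentially the paper's: the authors also pass to polar coordinates to obtain the explicit density of $(p_n)_\#\tilde{\nu}^n_\beta$ (their formula $\frac{2\Gamma(\frac{n+\beta}{2})}{\Gamma(\frac{n}{2})\Gamma(\frac{\beta}{2})}\frac{1}{t(1+nt^2)^{\beta/2}}(\frac{nt^2}{1+nt^2})^{n/2}$ is exactly your $g_n(t)$), and they likewise use Stirling's formula for the Gamma ratio to get the pointwise limit $\frac{2^{1-\beta/2}}{\Gamma(\beta/2)}t^{-(\beta+1)}e^{-1/(2t^2)}$. The only divergence is the final limiting step: the paper tests against an arbitrary bounded measurable $f$, exhibits the explicit dominating function $C\min\{1,t^{-(\beta+1)}\}$, and applies dominated convergence, whereas you invoke Scheff\'e's lemma. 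Both are valid and yield the (stronger) total variation convergence; Scheff\'e spares you the construction of a dominating function, but it does require knowing that the limit density integrates to $1$ --- i.e.\ that $\nu_\beta$ really is a probability measure --- which you assert ``by construction'' but which is not proved anywhere; it follows from the substitution $u=1/(2t^2)$, which reduces $\int_0^\infty t^{-(\beta+1)}e^{-1/(2t^2)}\,dt$ to $2^{\beta/2-1}\Gamma(\beta/2)$, so you should include that one-line check.
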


\begin{proof}
Take any bounded {\rm (}not necessarily continuous{\rm )} function $f$ on $[0,\infty)$.
By the polar transformation, we have
\begin{equation} \label{eq:exradcau1}
\int_{0}^{\infty} f(t) \, d(p_n)_{\#} \tilde{\nu}^n_\beta(t)
= \int_{0}^{\infty} \frac{2\Gamma(\frac{n+\beta}{2})}{\Gamma(\frac{n}{2})\Gamma(\frac{\beta}{2})} \frac{f(t)}{t(1+nt^2)^{\frac{\beta}{2}}} \left(\frac{nt^2}{1+nt^2}\right)^{\frac{n}{2}} \, dt.
\end{equation}
By Stirling's formula, we have
\[
\lim_{n \to \infty} \frac{\Gamma\left( \frac{n+\beta}{2} \right)}{\Gamma\left( \frac{n}{2} \right)} \frac{1}{t(1+nt^2)^{\frac{\beta}{2}}}\left( \frac{nt^2}{1+nt^2} \right)^{\frac{n}{2}} = \frac{1}{2^{\frac{\beta}{2}}t^{\beta+1}}e^{-\frac{1}{2t^2}}
\]
for any $t> 0$.
Moreover, since
\[
\sup_{n\in\N}\frac{\Gamma(\frac{n+\beta}{2})}{\Gamma(\frac{n}{2}) \, n^{\frac{\beta}{2}}} < \infty
\quad \text{and} \quad
0 < \frac{n^{\frac{\beta}{2}}}{t(1+nt^2)^{\frac{\beta}{2}}} \left( \frac{nt^2}{1+nt^2} \right)^{\frac{n}{2}} < \frac{1}{t^{\beta+1}}
\]
for any $t>0$, there exists a constant $C > 0$ depending only on $f$ such that
\[
\left|\frac{2\Gamma(\frac{n+\beta}{2})}{\Gamma(\frac{n}{2})\Gamma(\frac{\beta}{2})} \frac{f(t)}{t(1+nt^2)^{\frac{\beta}{2}}} \left(\frac{nt^2}{1+nt^2}\right)^{\frac{n}{2}}\right| < C \min\{1, \frac{1}{t^{\beta+1}}\}.
\]
Therefore, taking the limit of \eqref{eq:exradcau1} by the dominated convergence theorem, we obtain this lemma.
\end{proof}

\begin{proof}[Proof of Theorem \ref{thm:main_cauchy}]
Let $S^{n-1}$ be the unit sphere in $\R^n$ centered at origin.
Assume that $S^{n-1}$ is equipped with the Riemannian distance and the uniform probability measure $\sigma^{n-1}$.
It is well-known that the Euclidean space $(\R^n, \|\cdot\|)$ is isometric to the Euclidean cone $K_0(S^{n-1})$.
Moreover, by the polar transformation, we have
\[
\tilde{\nu}^n_\beta = q_\#\left((p_n)_\# \tilde{\nu}^n_\beta \otimes \sigma^{n-1}\right).
\]
Thus we obtain
\[
\tC^n_\beta = K_{0,(p_n)_\# \tilde{\nu}^n_\beta}(S^{n-1}).
\]
Therefore, by Theorem \ref{thm:main} together with Lemma \ref{lem:rad_conv} and \eqref{eq:conc_sphere},
the sequence $\{\tC^n_\beta\}_{n=1}^\infty$
concentrates to the half line $([0,\infty), |\cdot|, \nu_\beta)$.
This completes the proof.
\end{proof}

\begin{rem}
In the case of $\R^n = K_0(S^{n-1})$, Theorem \ref{thm:main} is also expressed as follows.
Let $\hat{\mu}_n$ be a radially symmetric Borel probability measure on $\R^n$ and let $p_n(x) := \|x\|$ for $x \in \R^n$. Assume that the measure $\mu_n := (p_n)_\# \hat{\mu}_n$ converges weakly to a Borel probability measure $\mu$ on $[0,\infty)$.
Then the sequence $\{(\R^n, \|\cdot\|, \hat{\mu}_n)\}_{n=1}^\infty$ concentrates to the half line $([0, \infty), |\cdot|, \mu)$.
\end{rem}

Corollary \ref{cor:Hopf} leads to the concentration of the following concrete example.

\begin{cor}
Let $F =\R$, $\C$, or $\Hb$ and let $d:= \dim_\R{F}$.
Then the quotient space
\[
\frac{1}{\sqrt{n}} C^{dn}_{\beta}/U^F
\]
concentrates to the half line $([0,\infty), |\cdot|, \nu_\beta)$ as $n \to \infty$.
\end{cor}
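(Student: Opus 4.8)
The plan is to reduce the statement to Corollary~\ref{cor:Hopf}, exactly in the way Theorem~\ref{thm:main_cauchy} was deduced from Theorem~\ref{thm:main}. First I would identify $\R^{dn}$ with the Euclidean cone $K_0(S^{dn-1})$ over the unit sphere, as in the proof of Theorem~\ref{thm:main_cauchy}. Since the scaled Cauchy measure of $\tfrac{1}{\sqrt n}C^{dn}_\beta$ is radially symmetric, the polar transformation expresses it as a cone measure $q_\#(\mu_n\otimes\sigma^{dn-1})$, where $\mu_n$ is the push-forward of that measure under the radial coordinate $x\mapsto\|x\|$. This gives the identification
\[
\tfrac{1}{\sqrt n} C^{dn}_\beta = K_{0,\mu_n}(S^{dn-1}).
\]
Quotienting by the isometric measure-preserving $U^F$-action and using the identity $K_{0,\mu_n}(S^{dn-1})/U^F = K_{0,\mu_n}(S^{dn-1}/U^F)$ recorded just before Corollary~\ref{cor:Hopf}, I obtain
\[
\tfrac{1}{\sqrt n} C^{dn}_\beta / U^F = K_{0,\mu_n}(S^{dn-1}/U^F),
\]
where the radial factor $\mu_n$ is untouched because the Hopf action only moves the angular variable.

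With this setup, two inputs remain. The first is that $\{S^{dn-1}/U^F\}_{n=1}^\infty$ is a L\'evy family: the quotient map is $1$-Lipschitz and measure-preserving, so $S^{dn-1}/U^F \prec S^{dn-1}$, and since $\OD(\,\cdot\,;-\kappa)$ is monotone under the Lipschitz order, the spherical concentration estimate \eqref{eq:conc_sphere} forces the quotients to form a L\'evy family as well; this is precisely the fact already invoked in the proof of Corollary~\ref{cor:Hopf}. The second is that $\mu_n$ converges weakly to the measure $\nu_\beta$ of \eqref{eq:recip}. Granting both, Corollary~\ref{cor:Hopf} with $\kappa=0$ applies verbatim and yields that $\{K_{0,\mu_n}(S^{dn-1}/U^F)\}_{n=1}^\infty$ concentrates to $([0,\infty),|\cdot|,\nu_\beta)$, which is the assertion.

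The main obstacle is the radial weak convergence $\mu_n\to\nu_\beta$, which is where the explicit limit density in \eqref{eq:recip} is produced; everything else is bookkeeping. I would establish it by a computation parallel to Lemma~\ref{lem:rad_conv}: using the polar transformation I would write $\int_0^\infty f\,d\mu_n$ as a one-dimensional integral of $f(t)$ against an explicit radial density assembled from $\Gamma$-factors and the Cauchy weight, and then pass to the limit by Stirling's formula together with the dominated convergence theorem, with a dominating function of the form $C\min\{1,t^{-(\beta+1)}\}$ as in the proof of Lemma~\ref{lem:rad_conv}. The delicate point is that the ambient dimension is $dn$ while the normalization is fixed by the chosen scaling, so one must track the exponents in the factor $(nt^2/(1+nt^2))^{dn/2}$ carefully in order to recover exactly the exponential $e^{-1/(2t^2)}$ and the constant $2^{1-\frac{\beta}{2}}/\Gamma(\tfrac{\beta}{2})$ of $\nu_\beta$; for $F=\R$ this is literally Lemma~\ref{lem:rad_conv}, and for $F=\C,\Hb$ it is the same argument with the dimension parameter adjusted accordingly.
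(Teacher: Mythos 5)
Your reduction is exactly the paper's: the paper proves this corollary in one line by combining the identification $K_{0,\mu}(S^{dn-1})/U^F=K_{0,\mu}(S^{dn-1}/U^F)$ with Corollary \ref{cor:Hopf}, the L\'evy property of the quotient spheres, and the radial convergence of Lemma \ref{lem:rad_conv}; your first two paragraphs reproduce this faithfully, and your justification that $S^{dn-1}/U^F\prec S^{dn-1}$ forces the quotients to be a L\'evy family is fine.

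The problem is in the step you yourself single out as delicate. With the normalization $1/\sqrt{n}$ in ambient dimension $dn$, the radial density contains the factor $\bigl(nt^2/(1+nt^2)\bigr)^{dn/2}$, which converges to $e^{-d/(2t^2)}$, not to $e^{-1/(2t^2)}$; tracking the Gamma factors likewise produces an extra $d^{\beta/2}$. So the computation you describe yields the weak limit $\nu_{\beta,\sqrt{d}}$ of Theorem \ref{thm:main_cauchy_gene}(2) (with $\lambda=\sqrt{d}$), not the measure $\nu_\beta$ of \eqref{eq:recip}, and for $F=\C,\Hb$ your argument as written does not close: the claimed identity of the limit measure is false for $d>1$ under that scaling. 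The reading consistent with the stated conclusion --- and the one under which the one-line reduction works --- is that the normalization is by the square root of the ambient dimension, i.e.\ the space is $\frac{1}{\sqrt{dn}}C^{dn}_\beta/U^F$; then the radial push-forward is literally $(p_{dn})_\#\tilde{\nu}^{dn}_\beta$, Lemma \ref{lem:rad_conv} applies verbatim along the subsequence of dimensions $dn$ with nothing to adjust, and Corollary \ref{cor:Hopf} finishes the proof. You should either adopt that normalization or replace $\nu_\beta$ by $\nu_{\beta,\sqrt{d}}$ in the conclusion.
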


We next prove Corollaries \ref{cor:normal} and \ref{cor:OD}.

\begin{proof}[Proof of Corollaries \ref{cor:normal}]
This follows from Theorem \ref{thm:main_cauchy} and \cite{MMG}*{Proposition 6.2(1)} directly.
\end{proof}

\begin{proof}[Proof of Corollary \ref{cor:OD}]
In this proof, we denote
\[
\diam(\nu_\beta; 1-\kappa) := \diam(([0,\infty), |\cdot|, \nu_\beta); 1-\kappa)
\]
for simplicity.
By the definition of the observable diameter, we see that
\[
\OD(([0,\infty), |\cdot|, \nu_\beta); -\kappa) = \diam(\nu_\beta; 1-\kappa).
\]
Theorem \ref{thm:main_cauchy} and the limit formula of the observable diameter in \cite{OS}*{Theorem 1.1} imply
\begin{align*}
\diam(\nu_\beta; 1-\kappa) &\le \liminf_{n\to\infty} \OD(\frac{1}{\sqrt{n}} C^n_\beta;-\kappa) \\
&\le \limsup_{n\to\infty} \OD(\frac{1}{\sqrt{n}} C^n_\beta;-\kappa) \le \lim_{\ep\to0+} \diam(\nu_\beta; 1-(\kappa-\ep))
\end{align*}
for any $\kappa > 0$. We prove
\begin{equation}\label{eq:recip_partial}
\lim_{\ep\to0+} \diam(\nu_\beta; 1-(\kappa-\ep)) = \diam(\nu_\beta; 1-\kappa)
\end{equation}
for any $\kappa>0$. Fix $\kappa > 0$ and take any $\delta > 0$ with $\diam(\nu_\beta;1-\kappa) < \delta$. There exists a Borel subset $A$ of $[0,\infty)$ such that
\[
\nu_\beta(A) \ge 1-\kappa \quad \text{ and } \quad \diam{A} < \delta.
\]
Let $\eta := \delta -\diam{A} > 0$. Then, since
\[
\nu_\beta(U_{\frac{\eta}{3}}(A)) > \nu_\beta(A) \ge 1-\kappa,
\]
we have
\[
\lim_{\ep\to0+} \diam(\nu_\beta; 1-(\kappa-\ep)) \le \diam{U_{\frac{\eta}{3}}(A)} \le \diam{A} + \frac{2}{3}\eta < \delta,
\]
which implies \eqref{eq:recip_partial}. We obtain this corollary.
\end{proof}

We now prove Theorem \ref{thm:main_cauchy_gene}.

\begin{proof}[Proof of Theorem \ref{thm:main_cauchy_gene}]
Let $\{r_n\}_{n=1}^\infty$ be a sequence of positive real numbers.
\begin{enumerate}
\item Assume that $r_n\sqrt{n} \to 0$. Take any $\ep > 0$. Lemma \ref{lem:rad_conv} implies
\[
\liminf_{n\to\infty} \nu^n_\beta(U_{\ep r_n^{-1}}(o)) = \liminf_{n\to\infty} (p_n)_\# \tilde{\nu}^n_\beta([0, \frac{\ep}{r_n\sqrt{n}})) \ge \nu_\beta([0, \infty)) = 1,
\]
where $o$ is the origin of $\R^n$.
Thus, for sufficiently large $n$, we have
\[
\nu^n_\beta(U_{\ep r_n^{-1}}(o)) \ge 1-\ep,
\]
which implies that $\prok(\nu^n_\beta, \delta_o) \le \ep$, where $\prok$ is defined with respect to $r_n \|\cdot\|$ on $\R^n$.
Therefore, by Proposition \ref{prop:mmg4.12}, we obtain
\[
\limsup_{n\to\infty}\square(r_nC^n_\beta, *) \le 2\ep.
\]
As $\ep \to 0$, the sequence $\{r_n C^n_\beta\}_{n=1}^\infty$ box-converges to the one-point mm-space $*$.

\item Assume that $r_n\sqrt{n} \to \lambda \in (0,\infty)$.
By Theorem \ref{thm:main_cauchy} and \cite{prod}*{Theorem 1.4}, the sequence $\{r_n C^n_\beta\}_{n=1}^\infty$ concentrates to the half line $([0, \infty), \lambda|\cdot|, \nu_\beta)$ scaled with $\lambda$. Let
\[
\nu_{\beta, \lambda}(dt) := \frac{2^{1-\frac{\beta}{2}}}{\Gamma(\frac{\beta}{2})} \frac{\lambda^\beta}{t^{\beta+1}} e^{-\frac{\lambda^2}{2t^2}} \, dt.
\]
Then $([0, \infty), \lambda|\cdot|, \nu_\beta)$ and $([0, \infty), |\cdot|, \nu_{\beta, \lambda})$ are mm-isomorphic to each other. Therefore we obtain the conclusion.

\item Assume that $r_n\sqrt{n} \to \infty$. By Corollary \ref{cor:OD}, we have
\[
\liminf_{n\to\infty} \OD(r_n C^n_\beta;-\kappa) = \liminf_{n\to\infty} (r_n\sqrt{n}) \OD(\frac{1}{\sqrt{n}} C^n_\beta;-\kappa) = \infty.
\]
Therefore $\{r_nC^n_\beta\}_{n=1}^\infty$ infinitely dissipates by Proposition \ref{prop:dissipate}.
\end{enumerate}
Therefore we obtain Theorem \ref{thm:main_cauchy_gene}.
\end{proof}

\begin{rem}
In \cite{ellipsoid}, the $n$-dimensional Gaussian space $\Gamma^n_{\{(n^{-1})^2\}} = n^{-1}\Gamma^n_{\{1^2\}}$ with variance $1/n^2$ concentrates to $*$ as $n \to \infty$ but it does not box-converge,
that is, there exist scaling orders of non-box-convergent L\'evy family.
On the other hand, for the Cauchy space, $\{r_nC^n_\beta\}_{n=1}^\infty$ is a L\'evy family if and only if it box-converges.
\end{rem}

\section{Proof of Theorem \ref{thm:non-box}}\label{sec:non-box}

In this section, we prove Theorem \ref{thm:non-box}. We need the following two lemmas.

\begin{lem}[\cite{MMG}*{Lemma 5.43}]\label{lem:partial}
If a sequence $\{X_n\}_{n=1}^\infty$ of mm-spaces box-converges to an mm-space $X$, then we have
\[
\diam(X;s) \leq \liminf_{n\to\infty} \diam(X_n;s) \leq \limsup_{n\to\infty}\diam(X_n;s) \leq \lim_{\delta\to 0+} \diam(X;s+\delta)
\]
for any $s >0$.
\end{lem}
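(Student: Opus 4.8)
The plan is to reduce the partial diameter to a one–dimensional quantity measured along a parameter, and then to transport near–optimal subsets between $X_n$ and $X$ using the definition of the box distance. First I would record the parameter description: for any parameter $\varphi\colon[0,1]\to X$,
\[
\diam(X;s)=\inf\left\{\diam\varphi(T)\midd T\subset[0,1]\text{ Borel},\ \mathcal{L}^1(T)\ge s\right\},
\]
because $\varphi_\#\mathcal{L}^1=m_X$ makes $T\mapsto\varphi(T)$ and $A\mapsto\varphi^{-1}(A)$ exchange $\mathcal{L}^1$–measure with $m_X$–measure while only decreasing diameters. Then, since $\{X_n\}$ box–converges to $X$, for each large $n$ I would fix through the definition of $\square$ parameters $\varphi_n\colon[0,1]\to X_n$ and $\psi\colon[0,1]\to X$ together with a Borel set $J_n\subset[0,1]$ with $\mathcal{L}^1(J_n)\ge1-\ep_n$ on which $|d_{X_n}(\varphi_n(u),\varphi_n(v))-d_X(\psi(u),\psi(v))|\le\ep_n$, where $\ep_n\to0$.

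For the right–hand inequality, fix $\delta>0$ and choose $T^\ast\subset[0,1]$ with $\mathcal{L}^1(T^\ast)\ge s+\delta$ realizing $\diam(X;s+\delta)$ up to $\delta$ through $\psi$. Intersecting with $J_n$ costs at most $\ep_n$ of measure, so $T^\ast\cap J_n$ has $\mathcal{L}^1$–measure $\ge s$ for $n$ large, while its $\varphi_n$–image has diameter at most $\diam(X;s+\delta)+\delta+\ep_n$. Hence $\limsup_n\diam(X_n;s)\le\diam(X;s+\delta)+\delta$, and letting $\delta\to0+$ yields $\limsup_n\diam(X_n;s)\le\lim_{\delta\to0+}\diam(X;s+\delta)$.

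For the left–hand inequality I would run the transport in the opposite direction: a near–optimal $T_n\subset[0,1]$ with $\mathcal{L}^1(T_n)\ge s$ and $\diam\varphi_n(T_n)\le\diam(X_n;s)+\delta$, intersected with $J_n$, produces a subset of $X$ of measure $\ge s-\ep_n$ and diameter $\le\diam(X_n;s)+\delta+\ep_n$, so that $\diam(X;s-\ep_n)\le\diam(X_n;s)+\delta+\ep_n$. This only controls $\diam(X;\cdot)$ slightly below the level $s$, and the remaining task is to remove the measure loss.

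The main obstacle is exactly this: the transport inevitably discards an $\ep_n$–fraction of mass on the exceptional set, so the shift $s\mapsto s-\ep_n$ cannot be avoided, and the stated bound $\diam(X;s)\le\liminf_n\diam(X_n;s)$ forces one to prove the \emph{left–continuity} of $s\mapsto\diam(X;s)$. I would establish this through a generalized–inverse representation: setting $F(D):=\sup\{m_X(A)\midd \diam A\le D\}$, which is non–decreasing, one has $\diam(X;s)=\inf\{D\ge0\midd F(D)\ge s\}$, and the generalized inverse of any monotone function is left–continuous. Justifying the representation reduces to showing that the supremum defining $F(D)$ is attained, which I would obtain by a tightness argument: take a maximizing sequence of closed sets, intersect with a nested exhaustion of $X$ by compacts, extract Hausdorff limits (along which the diameter is continuous and $m_X$ is upper semicontinuous), and form a diagonal union to produce a closed set of diameter $\le D$ and measure $F(D)$. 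With left–continuity available, letting $\ep_n\to0$ and then $\delta\to0+$ gives $\diam(X;s)=\diam(X;s^-)\le\liminf_n\diam(X_n;s)$, which completes the proof.
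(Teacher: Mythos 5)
This lemma is imported: the paper quotes it from \cite{MMG}*{Lemma 5.43} and gives no proof of its own, so there is no in-paper argument to compare against line by line. Your proof is correct and is essentially the standard one. The parameter description of $\diam(X;s)$ is valid for every fixed parameter (for the ``$\le$'' direction replace $\varphi(T)$, which is only analytic, by its closure --- same diameter, at least the same measure), and the two transport steps through $J_n$ are right; note only that in the definition of $\square$ the parameter of $X$ may also depend on $n$, so you should write $\psi_n$ and choose $T^\ast=T^\ast_n$ accordingly, which changes nothing. You correctly isolate the one genuinely nontrivial point, namely that the transported estimate only yields $\diam(X;s^-)\le\liminf_n\diam(X_n;s)$ and that the left-continuity of $s\mapsto\diam(X;s)$ must be supplied separately. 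Your route to it works: the generalized inverse $\inf\{D\mid F(D)\ge s\}$ of the non-decreasing $F$ is automatically left-continuous, and the identity $\diam(X;s)=\inf\{D\mid F(D)\ge s\}$ needs exactly the attainment of $F(D)$, which your tightness/Blaschke argument delivers --- the diagonal subsequence makes the Hausdorff limits $B_j$ nested (Hausdorff limits preserve inclusions), the increasing union then still has diameter at most $D$, and upper semicontinuity of $m_X$ along Hausdorff convergence of closed sets gives $m_X(B_j)\ge F(D)-1/j$. So the argument is complete modulo routine verification of those sketched steps; the same left-continuity/attainment issue is what the proof in \cite{MMG} has to address as well.
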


\begin{lem}[\cite{ellipsoid}*{Lemma 4.5}] \label{lem:box-dom}
Let $\{X_n\}_{n=1}^\infty$ be a box-convergent sequence of mm-spaces and $\{Y_n\}_{n=1}^\infty$ a sequence of mm-spaces with $Y_n \prec X_n$.
Then, $\{Y_n\}_{n=1}^\infty$ has a box-convergent subsequence.
\end{lem}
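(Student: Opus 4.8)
The plan is to deduce the conclusion from a precompactness criterion for the box distance, combined with the monotonicity of the relevant quantities under the Lipschitz order. Recall the standard box-precompactness criterion for $(\mathcal{X}, \square)$: a family $\mathcal{K} \subset \mathcal{X}$ is $\square$-precompact if and only if it is uniformly bounded and uniformly totally bounded, in the sense that for every $\ep > 0$ one has $\sup_{X \in \mathcal{K}} \diam(X; 1-\ep) < \infty$ and there is $N = N(\ep) \in \N$ so that every $X \in \mathcal{K}$ admits points $x_1, \dots, x_N$ with $m_X(\bigcup_{i=1}^N U_\ep(\{x_i\})) \ge 1 - \ep$. Since $\{X_n\}_{n=1}^\infty$ box-converges, the set $\{X_n\}$ is $\square$-precompact and hence satisfies both conditions with constants uniform in $n$. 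It therefore suffices to show that each condition passes from $\{X_n\}$ to $\{Y_n\}$ along the $1$-Lipschitz measure-preserving maps $f_n \colon X_n \to Y_n$ witnessing $Y_n \prec X_n$; the conclusion then follows because $\square$ is a complete metric, so a $\square$-precompact sequence has a $\square$-convergent subsequence.

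First I would verify that uniform total boundedness is inherited. Fix $n$ and choose $x_1, \dots, x_N \in X_n$ realizing $m_{X_n}(\bigcup_i U_\ep(\{x_i\})) \ge 1 - \ep$. Since $f_n$ is $1$-Lipschitz, $x \in U_\ep(\{x_i\})$ forces $f_n(x) \in U_\ep(\{f_n(x_i)\})$, so that $\bigcup_i U_\ep(\{x_i\}) \subset f_n^{-1}(\bigcup_i U_\ep(\{f_n(x_i)\}))$. Using $(f_n)_\# m_{X_n} = m_{Y_n}$ we then obtain $m_{Y_n}(\bigcup_i U_\ep(\{f_n(x_i)\})) = m_{X_n}(f_n^{-1}(\bigcup_i U_\ep(\{f_n(x_i)\}))) \ge m_{X_n}(\bigcup_i U_\ep(\{x_i\})) \ge 1 - \ep$. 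Hence the same $N(\ep)$ controls every $Y_n$.

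Next I would check that uniform boundedness is inherited, that is, $\diam(Y_n; 1-\ep) \le \diam(X_n; 1-\ep)$. Given a Borel set $A \subset X_n$ with $m_{X_n}(A) \ge 1-\ep$, its image $f_n(A)$ is analytic, hence universally measurable, and $\diam f_n(A) \le \diam A$ because $f_n$ is $1$-Lipschitz; moreover $m_{Y_n}(f_n(A)) = m_{X_n}(f_n^{-1}(f_n(A))) \ge m_{X_n}(A) \ge 1-\ep$. Taking the infimum over such $A$ yields $\diam(Y_n; 1-\ep) \le \diam(X_n; 1-\ep)$, so the uniform partial-diameter bound for $\{X_n\}$ transfers to $\{Y_n\}$.

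Combining the two previous paragraphs, $\{Y_n\}_{n=1}^\infty$ is uniformly bounded and uniformly totally bounded, hence $\square$-precompact; as $\square$ is a complete metric on $\mathcal{X}$, the sequence $\{Y_n\}_{n=1}^\infty$ has a $\square$-convergent subsequence, as claimed. I expect the main obstacle to be purely organizational: one must state the precompactness criterion in a form whose two quantitative conditions are manifestly monotone under $\prec$, and confirm that the uniform constants extracted from the convergent (hence precompact) family $\{X_n\}$ are exactly the constants that then bound $\{Y_n\}$. The only genuine technical subtlety is the universal measurability of the image $f_n(A)$ in the partial-diameter step, which is resolved by the facts that Borel images of Borel sets are analytic and that finite Borel measures extend to all analytic sets.
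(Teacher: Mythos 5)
The paper does not prove this lemma at all—it imports it verbatim from Kazukawa--Shioya (the ellipsoid paper, Lemma 4.5)—so your proposal should be judged as a reconstruction, and as such it is correct but rests on one imported ingredient of its own. Your route is: box-convergent $\Rightarrow$ $\square$-precompact, extract the two uniform quantities from the precompactness characterization (a uniform partial-diameter bound $\sup_n \diam(X_n;1-\ep)<\infty$ and a uniform $\ep$-covering number $N(\ep)$), show both are monotone under the Lipschitz order along the $1$-Lipschitz measure-preserving maps $f_n$, and close with completeness of $\square$. Both monotonicity arguments are sound: the covering step via $\bigcup_i U_\ep(\{x_i\}) \subset f_n^{-1}\bigl(\bigcup_i U_\ep(\{f_n(x_i)\})\bigr)$ is exactly right, and the partial-diameter step works, though your detour through analytic sets is heavier than needed—in the definition of $\diam(\,\cdot\,;1-\ep)$ the infimum runs over Borel sets, so you should either pass to a Borel subset of $f_n(A)$ of full measure (inner regularity of the extended measure) or, more simply, replace $f_n(A)$ by its closure $\overline{f_n(A)}$, which is Borel, has the same diameter, and satisfies $m_{Y_n}(\overline{f_n(A)}) \ge m_{X_n}(A)$ since $f_n^{-1}(\overline{f_n(A)}) \supset A$; monotonicity of the partial diameter under $\prec$ is in fact standard (it appears in Shioya's book). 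The one genuine gap to patch is provenance: the precompactness criterion you invoke is not stated in this paper nor, in this exact form, in the sources it quotes, so you must either cite it (it follows from L\"ohr's equivalence of the box and Gromov--Prokhorov metrics combined with the Greven--Pfaffelhuber--Winter characterization of Gromov-weak precompactness) or prove the sufficiency direction directly by approximating each $Y_n$ within box distance $O(\ep)$ by a finite mm-space with at most $N(\ep)$ points, distances in $[0,D(\ep)]$, and weights in the simplex, whose parameter space is compact. With that reference supplied, your argument is a complete and arguably more conceptual alternative to the cited proof: it isolates exactly which box-compactness invariants descend along the Lipschitz order, at the cost of relying on the equivalence of topologies, whereas the original source's argument stays inside the box-distance formalism.
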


\begin{proof}[Proof of Theorem \ref{thm:non-box}]
Let $R>0$ be the real number with
\[
\nu_\beta([0, R]) = \frac{1}{3}.
\]
We define maps $\phi_n \colon \R^n \to \R^n$ and $\phi\colon [0,\infty)\to [0,\infty)$ by
\[
\phi_n(x) := \begin{cases}
x & \text{if } \|x\| \le R, \\
\frac{R}{\|x\|} x & \text{if } \|x\| \ge R,
\end{cases}
\quad \text{and} \quad
\phi(r) := \begin{cases}
r & \text{if } r \le R, \\
R & \text{if } r \ge R.
\end{cases}
\]
Then $\phi_n$ and $\phi$ are 1-Lipschitz and $\|\phi_n(x)\| = \phi(\|x\|)$ for any $x \in \R^n$.

\begin{clm}
The sequence $\{(B^n_R(o), \|\cdot\|, (\phi_n)_\# \tilde{\nu}^n_\beta)\}_{n=1}^\infty$ concentrates to $([0,R], |\cdot|, \phi_\# \nu_\beta)$ as $n \to \infty$, where $B^n_R(o)$ is the closed ball in $\R^n$ centered at the origin $o$ with radius $R$.
\end{clm}

\begin{proof}
Let $p_n(x) := \|x\|$ for $x \in \R^n$. Then we have
\[
\prok((p_n)_\# (\phi_n)_\# \tilde{\nu}^n_\beta, \phi_\# \nu_\beta) = \prok(\phi_\# (p_n)_\# \tilde{\nu}^n_\beta, \phi_\# \nu_\beta) \le \prok((p_n)_\# \tilde{\nu}^n_\beta, \nu_\beta)
\]
since $\phi$ is 1-Lipschitz and \eqref{eq:lip_prok}.
Thus, by Theorem \ref{thm:main} and Lemma \ref{lem:rad_conv}, the sequence of
\[
K_{0,(p_n)_\# (\phi_n)_\# \tilde{\nu}^n_\beta}(S^{n-1}) = (B^n_R(o), \|\cdot\|, (\phi_n)_\# \tilde{\nu}^n_\beta)
\]
concentrates to $([0,R], |\cdot|, \phi_\# \nu_\beta)$ as $n \to \infty$.
\end{proof}

Now we suppose that $\{\frac{1}{\sqrt{n}}C^n_\beta\}_{n=1}^\infty$ has a box-convergent subsequence. Then, by Lemma \ref{lem:box-dom}, up to subsequence, $\{(B^n_R(o), \|\cdot\|, (\phi_n)_\# \tilde{\nu}^n_\beta)\}_{n=1}^\infty$ box-converges to  $([0,R], |\cdot|, \phi_\# \nu_\beta)$ as $n\to\infty$.
Since $\phi_\# \nu_\beta(\{R\}) = \frac{2}{3}$, we have
\[
\diam(\phi_\# \nu_\beta; \frac{2}{3}) = \diam\{R\} = 0.
\]
Moreover, if a Borel subset $A \subset B^n_R(o)$ satisfies $(\phi_n)_\# \tilde{\nu}^n_\beta(A) \ge \frac{1}{2}$, then
\[
A \cap \left\{x\in\R^n \midd \|x\| = R\right\} \neq \emptyset
\]
since $(\phi_n)_\# \tilde{\nu}^n_\beta(\{\|x\|<R\}) = \tilde{\nu}^n_\beta(\{\|x\|<R\}) \to \frac{1}{3}$ as $n\to\infty$.
In addition, if $\diam{A} < R$, then the set $A$ is in a half space of $\R^n$, which implies $(\phi_n)_\#\tilde{\nu}^n_\beta(A) < \frac{1}{2}$.
This is a contradiction.
Thus we have
\[
\liminf_{n\to\infty} \diam((\phi_n)_\# \tilde{\nu}^n_\beta; \frac{1}{2}) \ge R > 0.
\]
On the other hand, Lemma \ref{lem:partial} implies
\[
\limsup_{n\to\infty} \diam((\phi_n)_\# \tilde{\nu}^n_\beta; \frac{1}{2}) \le \lim_{s\to0+} \diam(\phi_\# \nu_\beta; \frac{1}{2}+s) \le  \diam(\phi_\# \nu_\beta; \frac{2}{3}) = 0.
\]
Thus $\{\frac{1}{\sqrt{n}}C^n_\beta\}_{n=1}^\infty$ must have no box-convergent subsequence. The proof is completed.
\end{proof}

\section{Several quantities for Cauchy space}\label{sec:inv}

In this section, we compute several geometric and analytic invariants for the Cauchy space and the limit space.
We have developed the general theory of invariants for mm-spaces in our previous paper \cite{EKM}.

\begin{dfn}[$N$-weighted Ricci curvature, cf.~\cite{Mil, Ohta}]
For an $n$-dimensional weighted Riemannian manifold $(M, g, \mu)$ with $\mu = \rho\vol_g$ for some positive smooth function $\rho$, and for a real number $N \in (-\infty, \infty)\setminus \{n\}$, the $N$-weighted Ricci curvature is given by
\begin{equation*}
\Ric_{\mu, N} = \Ric_{\rho, N} := \Ric_g + \Hess(-\log{\rho}) - \frac{\nabla \log{\rho}\otimes \nabla \log{\rho}}{N-n}.
\end{equation*}
\end{dfn}

\begin{prop}\label{prop:Ric}
We have
\begin{equation}\label{eq:Ric_n}
\Ric_{\tilde{\nu}^n_\beta, -\beta} = \frac{n(n+\beta)}{(1+n\|x\|^2)^2} ((1+n\|x\|^2)\Id - n x x^t)
\end{equation}
at any point $x \in \R^n$, where $\Id$ is the identity matrix and $xx^t=(x_ix_j)_{ij}$, under the natural identification of the Ricci tensor with a matrix,
and
\begin{equation}\label{eq:Ric_beta}
\Ric_{\nu_\beta, -\beta} = \frac{1}{t^4} + \frac{1}{(\beta+1)t^6}
\end{equation}
at any point $t \in [0,\infty)$.
\end{prop}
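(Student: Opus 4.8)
The plan is to compute both formulas directly from the definition of the $N$-weighted Ricci curvature with $N=-\beta$, exploiting the fact that both underlying spaces---Euclidean $\R^n$ and the half line $[0,\infty)$---are flat, so that $\Ric_g = 0$ and only the density terms contribute. The single genuinely important point, recurring in both computations, is that $N-n$ is \emph{negative} (equal to $-(n+\beta)$ in the first case and $-(\beta+1)$ in the second), so that the term $-\frac{\nabla\log\rho \otimes \nabla\log\rho}{N-n}$ carries a \emph{positive} sign; this sign flip is exactly what turns a density that would give negative curvature in the classical ($N=+\infty$) sense into nonnegative weighted Ricci curvature.

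For \eqref{eq:Ric_n}, I write the density of $\tilde{\nu}^n_\beta$ as $\rho(x)=c_n(1+n\|x\|^2)^{-(n+\beta)/2}$ and set $u:=1+n\|x\|^2$, so that $-\log\rho=\tfrac{n+\beta}{2}\log u+\text{const}$. First I compute $\nabla(-\log\rho)=\tfrac{(n+\beta)n}{u}\,x$ and then, using $\partial_i\partial_j u = 2n\delta_{ij}$, the Hessian
\[
\Hess(-\log\rho) = \frac{(n+\beta)n}{u}\Id - \frac{2(n+\beta)n^2}{u^2}\,xx^t.
\]
The gradient-tensor term is $\nabla\log\rho\otimes\nabla\log\rho=\tfrac{(n+\beta)^2 n^2}{u^2}\,xx^t$, and dividing by $N-n=-(n+\beta)$ and negating turns it into $+\tfrac{(n+\beta)n^2}{u^2}\,xx^t$. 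Adding, the $xx^t$ coefficients combine as $-2+1=-1$, and after factoring $\tfrac{(n+\beta)n}{u^2}$ I obtain $\tfrac{n(n+\beta)}{u^2}\big(u\,\Id - n\,xx^t\big)$, which is \eqref{eq:Ric_n}.

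For \eqref{eq:Ric_beta}, the manifold is one-dimensional with $\Ric_g=0$, and $\Hess$, $\nabla$ reduce to the second and first derivatives in $t>0$, where $\rho$ is smooth and positive. With $\rho(t)=c\,t^{-(\beta+1)}e^{-1/(2t^2)}$ I have $-\log\rho(t)=(\beta+1)\log t+\tfrac{1}{2t^2}+\text{const}$, hence
\[
(-\log\rho)'(t)=\frac{\beta+1}{t}-\frac{1}{t^3}, \qquad (-\log\rho)''(t)=-\frac{\beta+1}{t^2}+\frac{3}{t^4}.
\]
Dividing $((\log\rho)')^2=\big(\tfrac{\beta+1}{t}-\tfrac{1}{t^3}\big)^2$ by $N-n=-(\beta+1)$ and negating contributes $\tfrac{1}{\beta+1}\big(\tfrac{(\beta+1)^2}{t^2}-\tfrac{2(\beta+1)}{t^4}+\tfrac{1}{t^6}\big)=\tfrac{\beta+1}{t^2}-\tfrac{2}{t^4}+\tfrac{1}{(\beta+1)t^6}$. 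Summing with $(-\log\rho)''$, the $\tfrac{\beta+1}{t^2}$ terms cancel and the $t^{-4}$ terms give $3-2=1$, leaving $\tfrac{1}{t^4}+\tfrac{1}{(\beta+1)t^6}$, which is \eqref{eq:Ric_beta}.

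Neither computation has a conceptual obstacle; the only thing to watch is the bookkeeping of signs---in particular remembering that $N-n<0$ reverses the usual sign of the last term---and tracking the two cancellations (the $-2+1$ in the matrix case and the $\tfrac{\beta+1}{t^2}$ cancellation in the scalar case). The normalizing constants $c_n$ and $c$ play no role, as they drop out under logarithmic differentiation.
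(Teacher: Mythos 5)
Your computation is correct and is essentially identical to the paper's proof: both proceed by direct logarithmic differentiation of the densities, using that the base spaces are flat, and both track the same sign from $N-n=-(n+\beta)$ (resp. $-(\beta+1)$) and the same cancellations. Nothing further is needed.
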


\begin{proof}
Let
\[
\rho_n(x) := \frac{n^{\frac{n}{2}}\Gamma(\frac{n+\beta}{2})}{\pi^{\frac{n}{2}}\Gamma(\frac{\beta}{2})}\frac{1}{(1+n\|x\|^2)^\frac{n+\beta}{2}}.
\]
We calculate
\[
\nabla(-\log{\rho_n}) = \frac{n+\beta}{2} \frac{2nx}{1+n\|x\|^2} = \frac{n(n+\beta)}{1+n\|x\|^2}x,
\]
and then
\begin{align*}
\Hess(-\log{\rho_n}) &= \frac{n(n+\beta)}{(1+n\|x\|^2)^2} ((1+n\|x\|^2)\Id - 2n x x^t), \\
\frac{\nabla(\log{\rho_n})\otimes \nabla(\log{\rho_n})}{-\beta-n} &= - \frac{n(n+\beta)}{(1+n\|x\|^2)^2} n x x^t.
\end{align*}
Therefore we obtain \eqref{eq:Ric_n}. We next prove \eqref{eq:Ric_beta}.
Let
\[
\rho(t) := \frac{2^{1-\frac{\beta}{2}}}{\Gamma(\frac{\beta}{2})}\frac{1}{t^{\beta+1}} e^{-\frac{1}{2t^2}}.
\]
We calculate
\[
\nabla(-\log{\rho}) = -\frac{1}{t^3} + \frac{\beta+1}{t},
\]
and then
\begin{align*}
\Hess(-\log{\rho}) &= \frac{3}{t^4} - \frac{\beta+1}{t^2}, \\
\frac{\nabla(\log{\rho})\otimes \nabla(\log{\rho})}{-\beta-1} &= - \frac{1}{\beta+1}\left(\frac{1}{t^6} - \frac{2(\beta+1)}{t^4} + \frac{(\beta+1)^2}{t^2}\right).
\end{align*}
Therefore we obtain \eqref{eq:Ric_beta}. The proof is completed.
\end{proof}

\begin{dfn}[Variance]
Let $X$ be an mm-space. The ({\it observable}) {\it variance} $V(X)$ of $X$ is defined by
\[
V(X) := \sup_{f \in \Lip_1(X)} \frac{1}{2} \int_X \int_X |f(x) - f(y)|^2 \, dm_X^{\otimes2}(x,y) \ (\le \infty).
\]
\end{dfn}

Note that $V \colon \X \to [0,\infty]$ is $2$-homogeneous, lower semicontinuous with respect to the box and concentration topologies, and monotone with respect to Lipschitz order (see \cite{EKM}*{Section 4.1}).

\begin{prop}\label{prop:var}
If $\beta > 2$, then we have
\begin{equation}\label{eq:var}
V(\frac{1}{\sqrt{n}} C^n_\beta) \le \frac{1}{\beta-2}
\end{equation}
for any $n$. Moreover, we have
\begin{equation}\label{eq:var_liminf}
\liminf_{n\to\infty} V(\frac{1}{\sqrt{n}} C^n_\beta) \ge V(([0, \infty), |\cdot|, \nu_\beta)) = \frac{1}{\beta-2} - \frac{\Gamma(\frac{\beta-1}{2})^2}{2\Gamma(\frac{\beta}{2})^2} > 0.
\end{equation}
\end{prop}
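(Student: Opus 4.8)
The plan is to exploit the elementary identity
\[
\frac{1}{2}\int_X\int_X |f(x)-f(y)|^2\,dm_X^{\otimes 2}(x,y) = \int_X f^2\,dm_X - \Bigl(\int_X f\,dm_X\Bigr)^2 =: \mathrm{Var}_{m_X}(f),
\]
so that $V(X) = \sup_{f\in\Lip_1(X)}\mathrm{Var}_{m_X}(f)$. For the upper bound \eqref{eq:var}, recall from Proposition \ref{prop:scale} that $\frac{1}{\sqrt{n}}C^n_\beta$ is mm-isomorphic to $\tC^n_\beta = (\R^n, \|\cdot\|, \tilde{\nu}^n_\beta)$. Since every $f \in \Lip_1(\tC^n_\beta)$ satisfies $|f(x)-f(y)| \le \|x-y\|$ and $\tilde{\nu}^n_\beta$ is radially symmetric (so its barycenter is the origin), one obtains
\[
\mathrm{Var}_{\tilde{\nu}^n_\beta}(f) \le \frac{1}{2}\int\int \|x-y\|^2\,d\tilde{\nu}^n_\beta\,d\tilde{\nu}^n_\beta = \int_{\R^n}\|x\|^2\,d\tilde{\nu}^n_\beta.
\]
I would then evaluate this second moment: by the scaling $x \mapsto x/\sqrt{n}$ one has $\int\|x\|^2\,d\tilde{\nu}^n_\beta = \frac{1}{n}\int\|y\|^2\,d\nu^n_\beta$, and a polar-coordinate reduction turns the latter into $\frac{1}{2}B(\frac{n}{2}+1, \frac{\beta}{2}-1)$, which is finite exactly when $\beta > 2$. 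The Gamma factors collapse to give $\int\|y\|^2\,d\nu^n_\beta = \frac{n}{\beta-2}$, hence $\int\|x\|^2\,d\tilde{\nu}^n_\beta = \frac{1}{\beta-2}$ independently of $n$, which is \eqref{eq:var}.

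For \eqref{eq:var_liminf}, I would first invoke the lower semicontinuity of $V$ with respect to the concentration topology (\cite{EKM}*{Section 4.1}) together with the concentration $\frac{1}{\sqrt{n}}C^n_\beta \to ([0,\infty), |\cdot|, \nu_\beta)$ of Theorem \ref{thm:main_cauchy}; these at once yield $\liminf_{n\to\infty} V(\frac{1}{\sqrt{n}}C^n_\beta) \ge V(([0,\infty), |\cdot|, \nu_\beta))$. It then remains to evaluate the right-hand side. The key observation is that on the half line the identity map is a variance-maximizer among $1$-Lipschitz functions: any $f \in \Lip_1([0,\infty))$ satisfies $|f(s)-f(t)| \le |s-t|$ pointwise, so $\mathrm{Var}_{\nu_\beta}(f) \le \mathrm{Var}_{\nu_\beta}(\mathrm{id})$, and therefore $V(([0,\infty), |\cdot|, \nu_\beta)) = \mathrm{Var}_{\nu_\beta}(\mathrm{id})$.

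Finally I would compute the moments of $\nu_\beta$. The substitution $u = 1/(2t^2)$ converts $\int_0^\infty t^k\,d\nu_\beta$ into a Gamma integral and yields the closed form $\int_0^\infty t^k\,d\nu_\beta = 2^{-k/2}\,\Gamma(\frac{\beta-k}{2})/\Gamma(\frac{\beta}{2})$ for $0 \le k < \beta$. Taking $k=2$ recovers $\int t^2\,d\nu_\beta = \frac{1}{\beta-2}$ (consistent with the upper-bound computation via convergence of radial second moments), while $k=1$ gives $(\int t\,d\nu_\beta)^2 = \frac{1}{2}\Gamma(\frac{\beta-1}{2})^2/\Gamma(\frac{\beta}{2})^2$; subtracting produces exactly $\frac{1}{\beta-2} - \frac{\Gamma(\frac{\beta-1}{2})^2}{2\Gamma(\frac{\beta}{2})^2}$. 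Strict positivity is immediate, since $\nu_\beta$ is not a Dirac measure and hence $\mathrm{id}$ has strictly positive variance under it.

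The computations are routine; the only steps requiring care are the identification of the identity map as the variance-maximizer on the line---this is precisely what renders the limit variance explicitly computable---and the bookkeeping of Gamma-function identities so that they telescope to the stated constants. The one genuinely non-elementary ingredient is external, namely the lower semicontinuity of $V$ under concentration, but it is already available from \cite{EKM}.
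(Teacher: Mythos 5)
Your proposal is correct and follows essentially the same route as the paper: the upper bound via $\mathrm{Var}(f)\le\tfrac12\iint\|x-y\|^2$ together with the vanishing barycenter and the polar-coordinate evaluation of the second moment, and the lower bound via lower semicontinuity of $V$ under concentration plus an explicit moment computation for $\nu_\beta$. Your one small addition---spelling out that the identity map maximizes the variance among $1$-Lipschitz functions on the half line---is a point the paper uses implicitly, and your Gamma-function evaluations match the stated constants.
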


\begin{proof}
Since
\[
\int_{\R^n}\int_{\R^n} \langle x, y\rangle \, d(\nu^n_\beta)^{\otimes2}(x, y) = n \int_{\R}\int_{\R} xy \, d(\nu^1_\beta)^{\otimes2}(x, y) = n \left(\int_{\R} x \, d\nu^1(x)\right)^2 = 0,
\]
we have
\[
V(\frac{1}{\sqrt{n}}C^n_\beta) \le \frac{1}{2} \int_{\R^n}\int_{\R^n} \frac{1}{n} \|x-y\|^2 \, d(\nu^n_\beta)^{\otimes2}(x, y) =  \frac{1}{n} \int_{\R^n} \|x\|^2 \, d\nu^n_\beta(x) = \frac{1}{\beta - 2}.
\]
Indeed, we verify the last equality as follows.
\begin{align*}
\frac{1}{n} \int_{\R^n} \|x\|^2 \, d\nu^n_\beta(x) &= \frac{\Gamma(\frac{n+\beta}{2})}{n\pi^{\frac{n}{2}}\Gamma(\frac{\beta}{2})} \vol(S^{n-1})  \int_0^\infty \frac{r^{n+1}}{(1+r^2)^{\frac{n+\beta}{2}}} \, dr \\
&= \frac{\Gamma(\frac{n+\beta}{2})}{\pi^{\frac{n}{2}}\Gamma(\frac{\beta}{2})} \frac{2\pi^{\frac{n}{2}}}{\Gamma(\frac{n}{2})} \frac{\Gamma(\frac{n}{2}+1)\Gamma(\frac{\beta}{2}-1)}{2\Gamma(\frac{n+\beta}{2})} = \frac{1}{\beta-2}.
\end{align*}
Moreover, since the variance $V$ is lower semicontinuous with respect to the concentration topology, Theorem \ref{thm:main_cauchy} implies
\[
\liminf_{n\to\infty} V(\frac{1}{\sqrt{n}} C^n_\beta) \ge V(([0, \infty), |\cdot|, \nu_\beta)).
\]
and we have
\[
V(([0, \infty), |\cdot|, \nu_\beta)) = \int_0^\infty t^2 \, d\nu_\beta(t) - \left(\int_0^\infty t \, d\nu_\beta(t)\right)^2 = \frac{1}{\beta-2} - \frac{\Gamma(\frac{\beta-1}{2})^2}{2\Gamma(\frac{\beta}{2})^2}.
\]
Thus we obtain \eqref{eq:var_liminf}. The proof is completed.
\end{proof}

\begin{dfn}[Poincar\'e constant]
An mm-space $X$ satisfies the {\it $(2,2)$-Poincar\'e inequality} for constant $C > 0$ provided that for any bounded Lipschitz function $f$ on $X$,
\[
\frac{1}{2} \int_X \int_X |f(x) - f(y)|^2 \, dm_X^{\otimes2}(x,y) \le C^2 \int_X \lip_a(f)^2 \, dm_X,
\]
where $\lip_a(f)$ is the {\it asymptotic Lipschitz constant} defined by
\[
\lip_a(f)(x) := \lim_{r\to0} \sup_{y\neq y'\in U_r(x)} \frac{|f(y)-f(y')|}{d_X(y,y')}.
\]
The infimum of such a constant $C$ is denoted by $C_{2,2}(X)$ and is called the {\it $(2,2)$-Poincar\'e constant} of $X$.
\end{dfn}

Note that $C_{2,2} \colon \X \to [0,\infty]$ is $1$-homogeneous, lower semicontinuous with respect to the box and concentration topologies, and monotone with respect to the Lipschitz order (see \cite{EKM}*{Section 4.2}).

\begin{lem}
$C^n_\beta$ and $([0,\infty),|\cdot|, \nu_\beta)$ do not satisfy the $(2,2)$-Poincar\'e inequality, that is,
\[
C_{2,2}(C^n_\beta) = C_{2,2}(([0,\infty),|\cdot|, \nu_\beta)) = \infty.
\]
\end{lem}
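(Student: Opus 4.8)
The plan is to show that each space admits bounded $1$-Lipschitz functions whose Dirichlet energy $\int \lip_a(f)^2\,dm_X$ stays small while the left-hand side of the Poincar\'e inequality becomes arbitrarily large, which forces $C_{2,2}=\infty$. First I would record the elementary identity
\[
\frac{1}{2}\int_X\int_X |f(x)-f(y)|^2\,dm_X^{\otimes 2}(x,y)
= \int_X f^2\,dm_X - \Bigl(\int_X f\,dm_X\Bigr)^2 =: \mathrm{Var}_{m_X}(f),
\]
so that it suffices to make the variance of a bounded $1$-Lipschitz function blow up relative to its Dirichlet energy. Both cases reduce to a single one-dimensional computation: for the half line one works directly with $\nu_\beta$, while for $C^n_\beta=(\R^n,\|\cdot\|,\nu^n_\beta)$ one restricts to radial functions $f=h\circ p_n$ with $p_n(x)=\|x\|$, for which $\mathrm{Var}_{\nu^n_\beta}(f)=\mathrm{Var}_{\mu_n}(h)$ and $\int_{\R^n}\lip_a(f)^2\,d\nu^n_\beta=\int_0^\infty \lip_a(h)^2\,d\mu_n$, where $\mu_n:=(p_n)_\#\nu^n_\beta$. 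The key structural fact is that both $\nu_\beta$ and $\mu_n$ are probability measures on $[0,\infty)$ with a polynomially heavy tail $\mu([t,\infty))\asymp t^{-\beta}$ as $t\to\infty$; for $\nu_\beta$ this follows from $e^{-1/(2t^2)}\to 1$, and for $\mu_n$ from $r^{n-1}(1+r^2)^{-(n+\beta)/2}\sim r^{-(\beta+1)}$.

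Given such a measure $\mu$ on $[0,\infty)$, I would test against the bounded $1$-Lipschitz ramp
\[
h_a(t) := \min\{\max\{t-a,0\},\,a\} =
\begin{cases}
0 & t\le a,\\
t-a & a\le t\le 2a,\\
a & t\ge 2a,
\end{cases}
\]
for large $a>0$. Since $\lip_a(h_a)$ equals $1$ on $(a,2a)$ and vanishes elsewhere, the Dirichlet energy is $\int_0^\infty \lip_a(h_a)^2\,d\mu=\mu((a,2a))\le \mu([a,\infty))\le C_1 a^{-\beta}$. For the variance, $h_a\equiv 0$ on $[0,a]$ and $0\le h_a\le a$ give $\int h_a\,d\mu\le a\,\mu((a,\infty))\le C_1 a^{1-\beta}$, while $h_a\equiv a$ on $[2a,\infty)$ gives $\int h_a^2\,d\mu\ge a^2\mu([2a,\infty))\ge C_2 a^{2-\beta}$; hence, using $2-2\beta<2-\beta$,
\[
\mathrm{Var}_\mu(h_a)\ge C_2 a^{2-\beta}-C_1^2 a^{2-2\beta}\ge \tfrac{1}{2}C_2 a^{2-\beta}
\]
for all sufficiently large $a$.

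Combining the two estimates yields
\[
\frac{\mathrm{Var}_\mu(h_a)}{\int_0^\infty \lip_a(h_a)^2\,d\mu}
\ge \frac{\tfrac12 C_2\,a^{2-\beta}}{C_1\,a^{-\beta}}
= \frac{C_2}{2C_1}\,a^2 \longrightarrow \infty \qquad (a\to\infty),
\]
so no finite $C$ can satisfy the $(2,2)$-Poincar\'e inequality. Taking $\mu=\nu_\beta$ (via $h_a$) and $\mu=\mu_n$ (via the radial functions $h_a\circ p_n$) gives $C_{2,2}(([0,\infty),|\cdot|,\nu_\beta))=C_{2,2}(C^n_\beta)=\infty$. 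I expect the only real work to be the two-sided tail estimate $\mu([t,\infty))\asymp t^{-\beta}$, i.e.\ a routine asymptotic analysis of the explicit densities; the identification of $\lip_a$ for radial ramps and the variance bookkeeping are then elementary. One mild point to check is that the spheres $\{\|x\|=a\}$ and $\{\|x\|=2a\}$ carry no $\mu_n$-mass, so that $\lip_a(h_a\circ p_n)=1$ exactly on the annulus $\{a<\|x\|<2a\}$.
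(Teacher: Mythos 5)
Your proof is correct, but it takes a genuinely different route from the paper. The paper first reduces the statement for $C^n_\beta$ to the one-dimensional case via the Lipschitz order: since $C^1_\beta \prec C^n_\beta$ (Proposition on the projections $\pi^n_k$) and $C_{2,2}$ is monotone under Lipschitz domination, it suffices to treat $\nu^1_\beta$ on $\R$ and $\nu_\beta$ on $[0,\infty)$; for these two one-dimensional measures it then invokes Muckenhoupt's criterion \cite{BGL}*{Theorem 4.5.1} and checks that the Muckenhoupt quantity $\sup_x \mu([x,\infty))\int^x \rho^{-1}\,dt$ diverges. You instead exhibit explicit bounded $1$-Lipschitz ramp test functions and compare variance to Dirichlet energy directly, using only the two-sided polynomial tail bound $\mu([t,\infty))\asymp t^{-\beta}$ (with constants depending on the fixed $n$ in the case of $\mu_n=(p_n)_\#\nu^n_\beta$, which is harmless); for $C^n_\beta$ you use radial ramps rather than the domination argument, and your observation that $\lip_a(h\circ p_n)\le \lip_a(h)\circ p_n$ for the $1$-Lipschitz map $p_n$ gives the needed upper bound on the energy. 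Your computations check out: the energy is $O(a^{-\beta})$, the variance is bounded below by $C a^{2-\beta}$ for large $a$ because $a^{2-2\beta}=o(a^{2-\beta})$, and the ratio grows like $a^2$. What each approach buys: yours is self-contained (no external criterion) and makes the mechanism transparent --- it is in effect the ``necessity'' half of Muckenhoupt's criterion specialized to these heavy-tailed measures, and it witnesses the failure with concrete functions; the paper's route is shorter on the page given the cited criterion and the already-established domination $C^1_\beta\prec C^n_\beta$. One purely presentational caution: writing $\lip_a(h_a)$ overloads the subscript $a$ (asymptotic Lipschitz constant versus ramp parameter), so you should rename the ramp parameter in a final write-up.
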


\begin{proof}
By Muckenhoupt's criterion \cite{BGL}*{Theorem 4.5.1} and Proposition \ref{prop:Cauchy_dom}, it is sufficient to prove
\begin{align}
\sup_{x > 0} \nu^1_\beta([x,\infty)) \int_0^x \pi(1+t^2) \, dt &= \infty, \label{eq:Muck_Cauchy} \\
\sup_{x > m} \nu_\beta([x,\infty)) \frac{\Gamma(\frac{\beta}{2})}{2^{1-\frac{\beta}{2}}} \int_m^x t^{\beta+1} e^{\frac{1}{2t^2}} \, dt &= \infty, \label{eq:Muck_recip}
\end{align}
where $m$ is a median of $\nu_\beta$.

We first prove \eqref{eq:Muck_Cauchy}. For any $x>1$, we have
\begin{align*}
\nu^1_\beta([x,\infty)) \int_0^x \pi(1+t^2) \, dt
&\ge \int_x^\infty \frac{dt}{2t^2} \int_0^x (1+t^2) \, dt = \frac{1}{2} + \frac{1}{6}x^2,
\end{align*}
which implies \eqref{eq:Muck_Cauchy}. We next prove \eqref{eq:Muck_recip}.
We estimate
\[
\nu_\beta([x, \infty)) \ge \frac{2^{1-\frac{\beta}{2}}}{\Gamma(\frac{\beta}{2})} e^{-\frac{1}{2x^2}} \int_x^{\infty} \frac{dt}{t^{\beta+1}}
= \frac{2^{1-\frac{\beta}{2}}}{\Gamma(\frac{\beta}{2})} e^{-\frac{1}{2x^2}} \frac{1}{\beta x^{\beta}}
\]
and
\[
\frac{\Gamma(\frac{\beta}{2})}{2^{1-\frac{\beta}{2}}} \int_1^x t^{\beta+1} e^{\frac{1}{2t^2}} \, dt
\ge \frac{\Gamma(\frac{\beta}{2})}{2^{1-\frac{\beta}{2}}} \int_1^x t^{\beta +1} \, dt
= \frac{\Gamma(\frac{\beta}{2})}{2^{1-\frac{\beta}{2}}} \frac{1}{\beta+2}(x^{\beta+2}-1).
\]
Thus we have
\begin{equation*}
\nu_\beta([x, \infty)) \frac{\Gamma(\frac{\beta}{2})}{2^{1-\frac{\beta}{2}}} \int_1^x t^{\beta+1} e^{\frac{1}{2t^2}} \, dt
\geq \frac{1}{\beta(\beta+2)}e^{-\frac{1}{2x^2}} \left( x^2-\frac{1}{x^{\beta}} \right) \to \infty \text{ as } x\to\infty.
\end{equation*}
On the other hand, since $\nu_\beta$ is a Borel probability measure and $m$ is finite, we have
\begin{equation*}
\lim_{x\to\infty} \nu_\beta([x, \infty)) \frac{\Gamma(\frac{\beta}{2})}{2^{1-\frac{\beta}{2}}} \int_1^m t^{\beta+1} e^{\frac{1}{2t^2}} \, dt = 0.
\end{equation*}
These together imply that
\begin{equation*}
\lim_{x\to\infty} \nu_\beta([x, \infty))  \frac{\Gamma(\frac{\beta}{2})}{2^{1-\frac{\beta}{2}}} \int_m^x t^{\beta+1} e^{\frac{1}{2t^2}} \, dt =  \infty,
\end{equation*}
which implies \eqref{eq:Muck_recip}. The proof is completed.
\end{proof}

\section{Relation to the ergodic theory}\label{sec:ergodic}
In this section, we construct an infinite sequence of random variables that does not satisfy the law of large number while are identically distributed and have a finite moment, using the Cauchy distribution.
We start with recalling the Birkhoff ergodic theorem in this section.
\begin{thm}[\cite{W}*{Theorem 1.14}]\label{thm:BK_ergodic}
Let $(X, \Sigma, \mu)$ be a probability measure space and let $T \colon X \to X$ be a measure-preserving map, i.e., $T_\# \mu = \mu$. For any $f \in L^1(X, \mu)$, the time average
\[
\hat{f}(x) := \lim_{n\to\infty} \frac{1}{n} \sum_{i=0}^{n-1} f(T^i x)
\]
exists for almost all $x \in X$, where $T^i=T \circ T^{i-1}$ and $T^0 = \id_X$.
Moreover, $\hat{f}$ is given by the conditional expectation
\[
\E[f \mid \cI]
\]
almost everywhere on $X$, where $\cI := \left\{A \in \Sigma \midd T^{-1}(A) = A \right\}$ called the invariant $\sigma$-field of $T$.
\end{thm}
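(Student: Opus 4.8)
The plan is to prove this through the classical route via Hopf's \emph{maximal ergodic inequality}, following Garsia's short argument. Throughout, write $S_n f := \sum_{i=0}^{n-1} f \circ T^i$ for the Birkhoff sums, so that $\hat f = \lim_n n^{-1} S_n f$ is the object to be controlled. The first and decisive step is to establish the maximal lemma: for $f \in L^1(X,\mu)$ and $M_n := \max_{1 \le k \le n} S_k f$, one has $\int_{\{M_n > 0\}} f \, d\mu \ge 0$. I would prove this from the pointwise estimate $M_n \le f + M_n^+ \circ T$, where $M_n^+ := \max\{M_n, 0\}$: indeed $S_k f = f + (S_{k-1} f)\circ T$ and $S_{k-1} f \le M_n^+$ give $S_k f \le f + M_n^+ \circ T$ for each $1 \le k \le n$, and taking the maximum over $k$ yields the estimate. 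Restricting to $\{M_n > 0\}$, where $M_n = M_n^+$, this says $f \ge M_n^+ - M_n^+ \circ T$ there, and integrating together with the measure-preservation identity $\int M_n^+ \circ T \, d\mu = \int M_n^+ \, d\mu$ produces the claim.

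The second step converts the maximal inequality into almost-everywhere convergence. Set $\bar f := \limsup_n n^{-1} S_n f$ and $\underline f := \liminf_n n^{-1} S_n f$; both are $T$-invariant, hence $\cI$-measurable. For fixed rationals $\alpha > \beta$, the set $E := \{\underline f < \beta < \alpha < \bar f\}$ is $T$-invariant, so I can restrict the measure-preserving system to $E$ and apply the maximal lemma to $f - \alpha$, whose Birkhoff sums have positive supremum at every point of $E$; this gives $\int_E (f - \alpha)\, d\mu \ge 0$, that is $\int_E f\, d\mu \ge \alpha\,\mu(E)$. Symmetrically, applying the lemma to $\beta - f$ yields $\int_E f\, d\mu \le \beta\,\mu(E)$. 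Since $\alpha > \beta$, this forces $\mu(E) = 0$, and taking the countable union over rational pairs shows $\bar f = \underline f$ almost everywhere, so the limit $\hat f$ exists a.e.

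The final step identifies $\hat f$ with $\E[f \mid \cI]$. As $\hat f$ is $T$-invariant it is $\cI$-measurable, so it suffices to verify $\int_A \hat f \, d\mu = \int_A f \, d\mu$ for every $A \in \cI$. For such $A$ one has $T^{-1}A = A$, whence $\int_A S_n f \, d\mu = \sum_{i=0}^{n-1} \int_{T^{-i}A} f \, d\mu = n \int_A f \, d\mu$, so $\int_A n^{-1} S_n f \, d\mu = \int_A f \, d\mu$ for every $n$. The one place requiring care is passing to the limit inside the integral. I would first treat $f \in L^\infty$, where the averages $n^{-1}S_n f$ are uniformly bounded and dominated convergence applies at once, and then extend to general $f \in L^1$ by approximating in $L^1$ and using the maximal inequality to bound the tails of $\sup_n n^{-1}\lvert S_n(f-g)\rvert$. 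I expect this $L^1$-passage to be the main obstacle: Garsia's lemma is elementary once the recursion is set up, and the $\limsup$/$\liminf$ comparison is purely measure-theoretic, but justifying $\int_A \hat f \, d\mu = \int_A f \, d\mu$ for unbounded $f$ rests on the uniform-integrability estimate furnished by the maximal inequality rather than on a naive dominated-convergence argument.
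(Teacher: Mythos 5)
The paper gives no proof of this statement; it is quoted as a classical result directly from Walters \cite{W}*{Theorem 1.14}, and your argument is precisely the standard proof given there: Garsia's one-line derivation of the maximal ergodic lemma from $M_n \le f + M_n^+ \circ T$, the $\limsup$/$\liminf$ comparison over rational pairs on the invariant sets $E_{\alpha,\beta}$, and the identification of $\hat f$ with $\E[f\mid\cI]$ via $L^1$-approximation by bounded functions. The proposal is correct as outlined; the one step you flag as delicate (passing to the limit in $\int_A n^{-1}S_nf\,d\mu = \int_A f\,d\mu$ for unbounded $f$) in fact needs only Fatou's lemma together with the contraction $\|n^{-1}S_n h\|_{L^1} \le \|h\|_{L^1}$, so the maximal inequality is not strictly required there.
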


\begin{dfn}[Ergodicity]\label{dfn:ergodic}
Let $(X, \Sigma, \mu)$ be a probability measure space and let $T \colon X \to X$ be a measure-preserving map.
The map $T$ is said to be {\it ergodic} provided that for any $A \in \Sigma$ with $\mu(T^{-1}(A)\setminus A) = \mu(A \setminus T^{-1}(A)) = 0$, $\mu(A)$ is either $0$ or $1$.
\end{dfn}

\begin{thm}[\cite{W}*{Remark just below Theorem 1.14}]\label{thm:B_ergodic}
Under Theorem \ref{thm:BK_ergodic}, if $T$ is ergodic, then for any $f \in L^1(X, \mu)$, the time average $\hat{f}$ coincides with the mean $\E[f]$ almost everywhere on $X$.
\end{thm}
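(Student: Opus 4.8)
The plan is to deduce the ergodic case directly from the conditional-expectation description of the time average already furnished by Theorem \ref{thm:BK_ergodic}. That theorem gives $\hat{f} = \E[f \mid \cI]$ almost everywhere, so in particular $\hat f$ is $\cI$-measurable. The whole content of the ergodic refinement is therefore that ergodicity forces the invariant $\sigma$-field $\cI$ to be trivial modulo null sets, from which it follows that any $\cI$-measurable function is constant almost everywhere.

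First I would check that every $A \in \cI$ satisfies $\mu(A) \in \{0,1\}$. By the definition of $\cI$ we have $T^{-1}(A) = A$ exactly, so both $T^{-1}(A) \setminus A$ and $A \setminus T^{-1}(A)$ are empty, hence $\mu$-null. Ergodicity (Definition \ref{dfn:ergodic}) then immediately yields $\mu(A) \in \{0,1\}$. Thus $\cI$ contains only sets of measure $0$ or $1$.

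Next I would show that a $\cI$-measurable function $g$ (taking $g = \hat f$) is almost everywhere constant. For each $c \in \R$ the sublevel set $\{g \le c\}$ belongs to $\cI$, so $F(c) := \mu(\{g \le c\})$ takes only the values $0$ and $1$. Since $F$ is nondecreasing and right-continuous, with $F(c) \to 0$ as $c \to -\infty$ and $F(c) \to 1$ as $c \to \infty$, it must jump from $0$ to $1$ at the single point $a := \inf\{c : F(c) = 1\}$, which is finite because $\hat f \in L^1$; this means $g = a$ almost everywhere.

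Finally I would identify the constant $a$ with $\E[f]$. Applying the defining property of the conditional expectation to the invariant set $X \in \cI$ gives $\int_X \hat f \, d\mu = \int_X \E[f \mid \cI] \, d\mu = \int_X f \, d\mu = \E[f]$, while $\int_X \hat f \, d\mu = a$ since $\hat f = a$ almost everywhere; hence $\hat f = \E[f]$ almost everywhere. No step presents a genuine obstacle here: the only point deserving care is the passage from the mod-$0$ invariance appearing in the definition of ergodicity to the strict invariance of members of $\cI$, and this is settled at once since strict invariance is the stronger condition.
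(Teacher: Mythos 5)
The paper offers no proof of this statement---it is quoted directly from Walters with a citation to the remark following Theorem 1.14 there---and your argument is the standard one that source has in mind: strictly invariant sets satisfy the mod-$0$ invariance hypothesis of Definition \ref{dfn:ergodic}, so every set in $\cI$ has measure $0$ or $1$, hence the $\cI$-measurable version of $\hat f=\E[f\mid\cI]$ is a.e.\ constant, and integrating over $X\in\cI$ identifies the constant with $\E[f]$. The proof is correct as written.
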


\begin{ex}\label{ex:SLLN}
Let $\mu$ be a Borel probability measure on $\R$. We consider the measure space $(\R^\N, \cB(\R^\N), \mu^{\otimes \infty})$.
The map $\Theta$ is defined by the shift
\[
\Theta(x_1, x_2, \ldots) = (x_2, x_3, \ldots), \quad (x_1, x_2, \ldots) \in \R^\N.
\]
The shift $\Theta$ is ergodic with respect to $\mu^{\otimes \infty}$.
Assume that
\[
\E[|x|] = \int_\R |x| \, d\mu(x) < \infty.
\]
Letting $f(x_1, x_2, \ldots) = x_1$, we have
\[
\lim_{n\to\infty} \frac{1}{n} \sum_{i=1}^n x_i = \int_\R x \, d\mu(x)
\]
for almost all $(x_1, x_2,\ldots) \in \R^\N$ by Theorem \ref{thm:B_ergodic}.
This is exactly the strong law of large numbers.
All coordinates $x_i$ of $\R^\N$ are independently and identically distributed by  $\mu$ with finite moment.
\end{ex}

As a corollary of Lemma \ref{lem:rad_conv} and Proposition \ref{prop:var}, we obtain the following theorem.

\begin{thm}\label{thm:non-ergodic}
Let $\nu^\infty_\beta$, $\beta>0$, be the Borel probability measure on $\R^\N$ characterized by
\[
(\pi_{n})_\# \nu^\infty_\beta = \nu^n_\beta,
\]
where $\pi_n\colon \R^\N \to \R^n$, $n\in\N$, are natural projections.
If $\beta > 2$, then the shift
\[
\Theta(x_1, x_2, \ldots) = (x_2,x_3,\ldots)
\]
is not ergodic with respect to $\nu^\infty_\beta$.
\end{thm}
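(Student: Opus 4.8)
The plan is to exhibit an $L^1$ function whose Birkhoff time average fails to be almost everywhere constant, which by Theorem \ref{thm:B_ergodic} rules out ergodicity. The natural choice is the squared first coordinate $f(x) := x_1^2$ for $x = (x_1, x_2, \ldots) \in \R^\N$, because the ergodic partial sums
\[
\frac{1}{n}\sum_{i=0}^{n-1} f(\Theta^i x) = \frac{1}{n}\sum_{i=1}^n x_i^2 = \frac{1}{n}\|\pi_n(x)\|^2
\]
are exactly the radial quantities controlled by Lemma \ref{lem:rad_conv}. First I would check that $f \in L^1(\R^\N, \nu^\infty_\beta)$ when $\beta > 2$: since $(\pi_1)_\# \nu^\infty_\beta = \nu^1_\beta$, we have $\int f \, d\nu^\infty_\beta = \int_\R x^2 \, d\nu^1_\beta(x) = \frac{1}{\beta-2}$, which is finite precisely in this regime (this is the $n=1$ instance of the integral computed in the proof of Proposition \ref{prop:var}).

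Next I would apply the Birkhoff ergodic theorem (Theorem \ref{thm:BK_ergodic}) to conclude that the time average
\[
\hat{f}(x) = \lim_{n\to\infty} \frac{1}{n}\|\pi_n(x)\|^2
\]
exists for $\nu^\infty_\beta$-almost every $x$. The crux is then to identify the distribution of $\hat{f}$. Since $(\pi_n)_\# \nu^\infty_\beta = \nu^n_\beta$ and $\tilde{\nu}^n_\beta = (L_n)_\# \nu^n_\beta$ with $L_n(y) = y/\sqrt{n}$, the law of $\frac{1}{n}\|\pi_n(x)\|^2$ under $\nu^\infty_\beta$ coincides with the law of $p_n(z)^2 = \|z\|^2$ under $\tilde{\nu}^n_\beta$. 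By Lemma \ref{lem:rad_conv}, $(p_n)_\# \tilde{\nu}^n_\beta$ converges weakly to $\nu_\beta$, so applying the continuous map $s \mapsto s^2$ shows that $\frac{1}{n}\|\pi_n(x)\|^2$ converges in distribution to the law of $t^2$, where $t$ is distributed according to $\nu_\beta$.

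Finally I would combine these two facts. Almost-sure convergence of $\frac{1}{n}\|\pi_n(x)\|^2$ to $\hat{f}$ forces convergence in distribution to the law of $\hat{f}$; by uniqueness of the weak limit this law must equal the law of $t^2$ with $t \sim \nu_\beta$. By Proposition \ref{prop:var} the variance of $\nu_\beta$ is strictly positive, so $\nu_\beta$ is not a Dirac mass, and hence neither is the law of $t^2$. Consequently $\hat{f}$ is not almost everywhere constant. Were $\Theta$ ergodic, Theorem \ref{thm:B_ergodic} would force $\hat{f} = \E[f] = \frac{1}{\beta-2}$ almost everywhere, a contradiction, completing the proof.

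I expect the main obstacle to be the distributional identification in the middle step: one must verify carefully that the weak limit supplied by Lemma \ref{lem:rad_conv} (a statement about the sequence of scaled Cauchy spaces) and the almost-sure limit supplied by Birkhoff's theorem refer to the very same sequence of random variables $\frac{1}{n}\|\pi_n(\cdot)\|^2$ on the single fixed space $(\R^\N, \nu^\infty_\beta)$, so that uniqueness of the weak limit may legitimately be invoked to transfer positivity of variance from $\nu_\beta$ to $\hat{f}$.
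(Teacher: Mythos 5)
Your proposal is correct and follows essentially the same route as the paper: both take $f(x)=x_1^2$, integrable for $\beta>2$ by the variance computation, apply the Birkhoff theorem, and derive a contradiction with Lemma \ref{lem:rad_conv} via the identification of the law of $\frac{1}{n}\|\pi_n(x)\|^2$ under $\nu^\infty_\beta$ with that of $\|z\|^2$ under $\tilde{\nu}^n_\beta$. The only cosmetic difference is that the paper argues by contradiction (ergodicity would force $(p_n)_\#\tilde{\nu}^n_\beta\to\delta_{1/\sqrt{\beta-2}}$), whereas you first pin down the non-degenerate limit law of $\hat{f}$ and then conclude; the substance is identical.
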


\begin{proof}
Suppose that the shift $\Theta$ is ergodic with respect to $\nu^\infty_\beta$ for $\beta > 2$. We define a function $f \colon \R^\N \to \R$ by
\[
f(x_1, x_2, \ldots) = x_1^2.
\]
Note that $f$ is integrable with respect to $\nu^\infty_\beta$ by Proposition \ref{prop:var}. Then Theorem \ref{thm:B_ergodic} implies that
\[
\lim_{n\to \infty} \frac{1}{n} \sum_{i=1}^n x_i^2 = V(C^1_\beta) = \frac{1}{\beta-2}
\]
for almost all $(x_1, x_2, \ldots) \in \R^\N$ with respect to $\nu^\infty_\beta$. This means that
\[
(p_n)_\# \tilde{\nu}^n_\beta \to \delta_{\frac{1}{\sqrt{\beta-2}}} \quad \text{weakly}
\]
as $n \to \infty$, which contradicts Lemma \ref{lem:rad_conv}. The proof is completed.
\end{proof}

\begin{rem}
The existence and uniqueness of $\nu^\infty_\beta$ follows from the Kolmogorov extension theorem and \eqref{eq:consistency}.
All coordinates $x_i$ of $\R^\N$ are identically distributed by $\nu^1_\beta$ and the measure $\nu_\beta^1$ has a finite variance if $\beta>2$.
On the other hand, since $\nu^\infty_\beta$ is not a product measure, coordinates are not independently distributed to each other.
\end{rem}

\begin{bibdiv}
\begin{biblist}

\bib{BGL}{book}{
   author={Bakry, Dominique},
   author={Gentil, Ivan},
   author={Ledoux, Michel},
   title={Analysis and geometry of Markov diffusion operators},
   series={Grundlehren der mathematischen Wissenschaften [Fundamental Principles of Mathematical Sciences]},
   volume={348},
   publisher={Springer, Cham},
   date={2014},
   pages={xx+552},
   isbn={978-3-319-00226-2},
   isbn={978-3-319-00227-9},
}

\bib{BL}{article}{
   author={Bobkov, Sergey G.},
   author={Ledoux, Michel},
   title={Weighted Poincar\'{e}-type inequalities for Cauchy and other convex measures},
   journal={Ann. Probab.},
   volume={37},
   date={2009},
   number={2},
   pages={403--427},
   issn={0091-1798},
}

\bib{EKM}{article}{
   author={Esaki, Syota},
   author={Kazukawa, Daisuke},
   author={Mitsuishi, Ayato},
   title={Invariants for Gromov's pyramids and their applications},
   note={preprint (2023), arXiv:2308.01620, to appear in Adv.~Math.},
}

\bib{GroWaist}{article}{
   author={Gromov, M.},
   title={Isoperimetry of waists and concentration of maps},
   journal={Geom. Funct. Anal.},
   volume={13},
   date={2003},
   number={1},
   pages={178--215},
   issn={1016-443X},
}

\bib{Grmv}{book}{
   author={Gromov, Misha},
   title={Metric structures for Riemannian and non-Riemannian spaces},
   series={Modern Birkh\"auser Classics},
   edition={Reprint of the 2001 English edition},
   publisher={Birkh\"auser Boston, Inc., Boston, MA},
   date={2007},
   pages={xx+585},
   isbn={978-0-8176-4582-3},
   isbn={0-8176-4582-9},
}

\bib{GM}{article}{
   author={Gromov, M.},
   author={Milman, V. D.},
   title={A topological application of the isoperimetric inequality},
   journal={Amer. J. Math.},
   volume={105},
   date={1983},
   number={4},
   pages={843--854},
   issn={0002-9327},
}

\bib{prod}{article}{
   author={Kazukawa, Daisuke},
   title={Concentration of product spaces},
   journal={Anal. Geom. Metr. Spaces},
   volume={9},
   date={2021},
   number={1},
   pages={186--218},
}

\bib{comts}{article}{
   author={Kazukawa, Daisuke},
   title={Convergence of metric transformed spaces},
   journal={Israel J. Math.},
   volume={252},
   date={2022},
   number={1},
   pages={243--290},
   issn={0021-2172},
}

\bib{ellipsoid}{article}{
   author={Kazukawa, Daisuke},
   author={Shioya, Takashi},
   title={High-dimensional ellipsoids converge to Gaussian spaces},
   note={preprint (2020), arXiv: 2003.05105, to appear in J.~Math.~Soc.~Japan.},
}

\bib{Led}{book}{
   author={Ledoux, Michel},
   title={The concentration of measure phenomenon},
   series={Mathematical Surveys and Monographs},
   volume={89},
   publisher={American Mathematical Society, Providence, RI},
   date={2001},
   pages={x+181},
   isbn={0-8218-2864-9},
}

\bib{Levy}{book}{
   author={L\'{e}vy, Paul},
   title={Probl\`emes concrets d'analyse fonctionnelle. Avec un compl\'{e}ment sur les fonctionnelles analytiques par F. Pellegrino},
   language={French},
   note={2d ed},
   publisher={Gauthier-Villars, Paris},
   date={1951},
   pages={xiv+484},
}

\bib{MRS}{article}{
   author={Magnabosco, Mattia},
   author={Rigoni, Chiara},
   author={Sosa, Gerardo},
   title={Convergence of metric measure spaces satisfying the CD condition for negative values of the dimension parameter},
   journal={Nonlinear Anal.},
   volume={237},
   date={2023},
   pages={Paper No. 113366, 48},
   issn={0362-546X},
}

\bib{Mil}{article}{
   author={Milman, Emanuel},
   title={Beyond traditional curvature-dimension I: new model spaces for isoperimetric and concentration inequalities in negative dimension},
   journal={Trans. Amer. Math. Soc.},
   volume={369},
   date={2017},
   number={5},
   pages={3605--3637},
   issn={0002-9947},
}

\bib{VMil}{article}{
   author={Milman, V. D.},
   title={The heritage of P. L\'{e}vy in geometrical functional analysis},
   note={Colloque Paul L\'{e}vy sur les Processus Stochastiques (Palaiseau, 1987)},
   journal={Ast\'{e}risque},
   number={157-158},
   date={1988},
   pages={273--301},
   issn={0303-1179},
}

\bib{NS}{article}{
   author={Nakajima, Hiroki},
   author={Shioya, Takashi},
   title={Convergence of group actions in metric measure geometry},
   note={preprint (2021), arXiv:2104.00187, to appear in Commun.~Anal.~Geom.},
}

\bib{Ohta}{article}{
   author={Ohta, Shin-ichi},
   title={$(K,N)$-convexity and the curvature-dimension condition for negative $N$},
   journal={J. Geom. Anal.},
   volume={26},
   date={2016},
   number={3},
   pages={2067--2096},
   issn={1050-6926},
}

\bib{Oshima}{article}{
   author={Oshima, Shun},
   title={Stability of curvature-dimension condition for negative dimensions under concentration topology},
   journal={J. Geom. Anal.},
   volume={33},
   date={2023},
   number={12},
   pages={Paper No. 377, 37},
   issn={1050-6926},
}

\bib{OS}{article}{
   author={Ozawa, Ryunosuke},
   author={Shioya, Takashi},
   title={Limit formulas for metric measure invariants and phase transition property},
   journal={Math. Z.},
   volume={280},
   date={2015},
   number={3-4},
   pages={759--782},
   issn={0025-5874},
}

\bib{OY}{article}{
   author={Ozawa, Ryunosuke},
   author={Yokota, Takumi},
   title={Stability of RCD condition under concentration topology},
   journal={Calc. Var. Partial Differential Equations},
   volume={58},
   date={2019},
   number={4},
   pages={Paper No. 151, 30},
   issn={0944-2669},
}

\bib{MML}{article}{
   author={Shioya, Takashi},
   title={Metric measure limits of spheres and complex projective spaces},
   conference={
      title={Measure theory in non-smooth spaces},
   },
   book={
      series={Partial Differ. Equ. Meas. Theory},
      publisher={De Gruyter Open, Warsaw},
   },
   isbn={978-3-11-055082-5},
   isbn={978-3-11-055083-2},
   date={2017},
}

\bib{MMG}{book}{
   author={Shioya, Takashi},
   title={Metric measure geometry},
   series={IRMA Lectures in Mathematics and Theoretical Physics},
   volume={25},
   note={Gromov's theory of convergence and concentration of metrics and
   measures},
   publisher={EMS Publishing House, Z\"urich},
   date={2016},
   pages={xi+182},
   isbn={978-3-03719-158-3},
}

\bib{ST}{article}{
   author={Shioya, Takashi},
   author={Takatsu, Asuka},
   title={High-dimensional metric-measure limit of Stiefel and flag manifolds},
   journal={Math. Z.},
   volume={290},
   date={2018},
   number={3-4},
   pages={873--907},
   issn={0025-5874},
}

\bib{W}{book}{
   author={Walters, Peter},
   title={An introduction to ergodic theory},
   series={Graduate Texts in Mathematics},
   volume={79},
   publisher={Springer-Verlag, New York-Berlin},
   date={1982},
   pages={ix+250},
   isbn={0-387-90599-5},
}

\end{biblist}
\end{bibdiv}
\end{document}